\newtheoremstyle{theorem}{4mm}{1mm}{\itshape}{ }{\bfseries}{.}{ }{}
\theoremstyle{theorem}
\newtheorem{theoremA}{Theorem A\ignorespaces}
\providecommand{\keywords}[1]
{
	\small	
	\textbf{\textit{Keywords:}} #1
}
\numberwithin{equation}{section}
\newtheorem{mytheorem}{Theorem}
\newtheorem{pro}{Proposition}
\numberwithin{pro}{section}
\newtheorem{lemma}{Lemma}
\numberwithin{lemma}{section}
\newtheorem{remark}{Remark}
\numberwithin{remark}{section}
\numberwithin{mytheorem}{section}
\numberwithin{corollary}{section}
\newenvironment{myproof}[2] {\paragraph{Proof of {#1} {#2} :}}{\hfill$\square$}
\newtheorem{defi}{Definition}[section]
\title{ Normalized solutions to a Choquard equation involving mixed
	local and nonlocal operators}
\author{J. Giacomoni\footnote{LMAP (UMR E2S UPPA CNRS 5142) Bat. IPRA, Avenue de l'Universit\'{e}, 64013 Pau, France. e-mail: jacques.giacomoni@univ-pau.fr}\;, Nidhi Nidhi\footnote{Department of Mathematics, Indian Institute of Technology, Delhi, Hauz Khas, New Delhi-110016, India. e-mail: nidhi.nidhi@maths.iitd.ac.in}\; and K. Sreenadh\footnote{Department of Mathematics, Indian Institute of Technology, Delhi, Hauz Khas, New Delhi-110016, India. e-mail: sreenath@maths.iitd.ac.in}\;}
\date{}
\begin{document}
	\maketitle
	\begin{abstract}
		\noindent In the present paper, we study the existence of normalized solutions for a  Choquard type equation involving mixed diffusion type operators.  We also provide regularity results of these solutions. Next, the equivalence between existence of normalized  solutions and the existence of  normalized ground states is established.\\
		\noindent \keywords{Normalized solutions, local-nonlocal mixed operator, Choquard equation, Sobolev regularity, existence results, ground states.}
	\end{abstract}
	\section{Introduction and Main Results}
	\noindent In the present work, we are interested to study the existence and properties of the  normalized solutions to the following Choquard equation involving mixed local and nonlocal type operators:
	\begin{equation}\label{prob}
		\begin{array}{rcl}
			\mathcal{L}	u+u & = & \mu (I_{\alpha}*|u|^p)|u|^{p-2}u\;\;\text{in } \mathbb{R}^n,\\
			\left\| u \right\|_2^2 & = & \tau,
		\end{array}
	\end{equation}
	where $\frac{n+\alpha}{n}\leq p \leq \frac{2s+n+\alpha}{n}$, $\tau >0$ is a constant, $\mu>0$ is a parameter, $I_{\alpha}$ is the Riesz potential of order $\alpha \in (0,n)$ defined by
	$$I_{\alpha}=\frac{A_{n,\alpha}}{|x|^{n-\alpha}},\;\;\text{with } A_{n,\alpha}=\frac{\Gamma(\frac{n-\alpha}{2})}{\pi^{\frac{n}{2}}2^{\alpha}\Gamma(\frac{\alpha}{2})}\; \text{  for every } x\in \mathbb{R}^n \setminus \{0\}$$ 
	and the mixed operator $\mathcal{L}$ is given by
	$$\mathcal{L}=-\Delta+\lambda(-\Delta)^s\;\text{ for some } s\in (0,1) \text{ and parameter } \lambda>0.$$
	Here, the operator $\mathcal{L}$ is referred as a mixed operator since it contains both local and nonlocal properties. It involves the classical Laplacian $(-\Delta)$ and the fractional Laplacian $(-\Delta)^s$ defined as:
	$$(-\Delta)^su(x)=C(n,s)\text{P.V.}\int_{\mathbb{R}^n}\frac{u(x)-u(y)}{|x-y|^{n+2s}}dy\;\text{ for } s\in(0,1),$$
	where $C(n,s)$ is a normalizing constant given by
	$$C(n,s)=\left(\int_{\mathbb{R}^n}\frac{1-cos(x)}{|x|^{n+2s}}dx\right)^{-1}$$
	and P.V. is the abbreviation for principal value. 
	
	\noindent We are interested to study the operator $\mathcal{L}$ due to its wide range of applications. Broadly speaking, such an operator comes into the picture whenever the impact on a physical phenomenon is caused by some local as well as nonlocal changes. One may come across this operator while studying the bi-model power law distribution processes, see \cite{pagnini2021should}.
	It is also considered in the theory of optimal searching, biomathematics and animal foraging, see \cite{dipierro2022non} for further extend.
	Issues regarding the existence of solutions, their regularity 
	and symmetry properties, Faber-Krahn type inequality, Neumann problems and Green functions estimates have been investigated in different contributions (see in particular \cite{biagi2021global,biagi2022mixed,abatangelo2021elliptic}) and for a further study about  the semilinear elliptic equations involving mixed operators, we quote \cite{arora2021combined} and references therein. In the present work, we would like to study a certain type of solutions, precisely normalized solutions, for a class of elliptic problems involving the operator $\mathcal{L}$ that to the best of our knowledge has not been discussed in the former literature.
	
	\noindent We highlight that the study of normalized solutions has significant importance in physics, as it represents an entity satisfying the dynamics along with a fixed mass. Consider for instance the following semilinear equation with a constraint:
	\begin{equation}\label{Norm_sol}
		\left\{ \begin{array}{rl}   	
			& 	-\Delta u  =  \lambda u +g(u)\;\;\text{in } \mathbb{R}^n,\\
			&  	\left\| u \right\|_2^2 =  c.
		\end{array}
		\right.
	\end{equation} 
	The physical motivation to study \eqref{Norm_sol} can be given by the fact that its solution gives stationary states of a nonlinear Schr$\ddot{\text{o}}$dinger equation with prescribed $L^2-$norm. Formally, the solution of \eqref{Norm_sol} is called a normalized solution. In \cite{jeanjean1997existence}, Jeanjean obtained the existence of radial solutions for \eqref{Norm_sol} under some assumptions on $g$. In \cite{bartsch2012normalized}, the existence of infinite solutions to \eqref{Norm_sol} under same assumptions has been shown.
	Further in \cite{noris2015existence}, the normalized solutions are discussed and described in case of  bounded domains with Dirichlet boundary conditions. Considering the domain to be the unit ball and $g(x)=|x|^{p-1}x$, the existence of normalized solutions has been seen for $p$ lying in $(1,1+\frac{4}{n}),\; (1+\frac{4}{n}, 2^*-1)$ and $p=1+\frac{4}{n}$ under some conditions on $c$. 
	Moreover, the problem in general bounded domains has been dealt by the authors in \cite{pierotti2017normalized}.  Existence of normalized solutions of nonlinear Schr$\ddot{\text{o}}$dinger systems has been also extensively studied, interested readers can go through      \cite{gou2018multiple,bartsch2016normalized,bartsch2018normalized,bartsch2019multiple,noris2014stable,noris2019normalized}. Normalized solutions are also taken into consideration in the study of quadratic ergodic mean field games system, see in particular \cite{pellacci2021normalized}.
	
	\noindent	Recently, the study of normalized solutions of the Choquard equation has attracted many researchers. This investigation started with the work of Lieb in \cite{lieb1977existence}, where they studied the existence and uniqueness of the minimizing solution for the problem:
	\begin{equation}\label{Norm_Choq}
		\left\{ \begin{array}{rcl}   	
			-\Delta u  +  \lambda u & = & \mu (I_\alpha*|u|^p)|u|^{p-2}u\;\;\text{in } \mathbb{R}^n,\\
			\left\| u \right\|_2^2 & =  &\tau.
		\end{array}
		\right.
	\end{equation} 
	Pekar's work in \cite{pekar1954untersuchungen}, which examines the model in quantum theory of a polaron at rest by considering the equation 
	\begin{equation}\label{Choq}
		\left\{ \begin{array}{rl}   	
			& 	-\Delta u  +  u = (I_2*|u|^2)u\;\;\text{in } \mathbb{R}^3,\\
			&  	u \in H^1(\mathbb{R}^3),
		\end{array}
		\right.
	\end{equation} 
	shed light on the significance of the Choquard equation. Further in 1976, Choquard used the energy functional associated to \eqref{Choq} in order to study a suitable approximation to Hartree-Fock theory for one component plasma. The equation is mainly used to describe an electron trapped in its own hole (see \cite{lieb1977existence}). It also arises in quantum mechanics, see \cite{penrose1996gravity} and references therein. Many authors have subsequently studied the existence, the multiplicity and qualitative properties of the solution to the problem \eqref{Norm_Choq}, see  \cite{filippucci2020singular,moroz2013groundstates, liu2022another}. 
	
	\noindent Considering \eqref{prob}, we look for weak solutions in the energy space $H^1(\mathbb{R}^n)$ equipped with the inner product defined as:
	$$\langle u,v \rangle:=\int_{\mathbb{R}^n}\nabla u.\nabla vdx+\int_{\mathbb{R}^n}u.vdx + \lambda \ll u,v \gg,$$
	where $$\ll u,v \gg:=\frac{C(n,s)}{2}\int_{\mathbb{R}^n}\int_{\mathbb{R}^n}\frac{(u(x)-u(y)).(v(x)-v(y))}{|x-y|^{n+2s}}dxdy,$$
	for each $u,v \in H^1(\mathbb{R}^n)$ and the corresponding norm as follows:
	$$\left\| u \right\|_{H^1(\mathbb{R}^n)}^2:=\left\| \nabla u \right\|_2^2+\lambda [u]^2+\left\| u \right\|_2^2; \text{ with }[u]^2:=\frac{C(n,s)}{2}\int_{\mathbb{R}^n}\int_{\mathbb{R}^n}\frac{|u(x)-u(y)|^2}{|x-y|^{n+2s}}dxdy.$$
 Similarly, $\|u\|_{H^s(\mathbb{R}^n)}^2:=[u]^2+\|u\|^2_2.$
	
	\noindent Precisely, we give below the notion of weak solutions:
	\begin{defi}
		A function $u\in H^1(\mathbb{R}^n)$ is said to be a weak solution of \eqref{prob} if $\left\|u\right\|_2^2=\tau$ and
		\begin{equation*}
			\int_{\mathbb{R}^n}\nabla u.\nabla v dx+\lambda \ll u,v \gg + \int_{\mathbb{R}^n}u.v dx
			=\mu\int_{\mathbb{R}^n}(I_{\alpha}*|u|^p)|u|^{p-2}uvdx,
		\end{equation*}
		for each $v \in H^1(\mathbb{R}^n)$.
	\end{defi}
	\noindent The energy functional associated to \eqref{prob} is given for any $u\in H^1(\mathbb{R}^n)$ by :
	$$S_\lambda(u)  =  \frac{1}{2}\left\| \nabla u \right\|_2^2+\frac{1}{2}\left\| u \right\|_2^2-\frac{\mu}{2p}\int_{\mathbb{R}^n}(I_{\alpha}*|u|^p)|u|^p +\frac{\lambda [u]^2}{2}.$$
	First, we investigate the regularity of weak solutions. More precisely, we establish the following:
	\begin{mytheorem}\label{thm 1.1}
		Assume $n\geq 3$ and $\alpha\in(0,n)$. If $u$ is a solution of \eqref{prob}, then 
		$u\in L^q(\mathbb{R}^n)$ for every $q\in[2,\frac{n}{\alpha}\frac{2n}{n-2})$.
	\end{mytheorem}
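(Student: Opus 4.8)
The plan is to bootstrap the integrability of $u$, starting from the base regularity given by the energy space and improving it through the equation. Since $u\in H^1(\mathbb{R}^n)$, the Sobolev embedding yields $u\in L^q(\mathbb{R}^n)$ for every $q\in[2,2^*]$ with $2^*=\tfrac{2n}{n-2}$; the nonlocal seminorm $[u]^2$ only encodes additional $H^s$-control with $s<1$, which gives the weaker embedding into $L^{2n/(n-2s)}$ and hence does not enlarge this starting range. The engine that turns integrability of $u$ into integrability of the right-hand side is the Hardy--Littlewood--Sobolev inequality: whenever $|u|^p\in L^a(\mathbb{R}^n)$ with $1<a<\tfrac{n}{\alpha}$ one gets $I_\alpha*|u|^p\in L^b(\mathbb{R}^n)$ with $\tfrac1b=\tfrac1a-\tfrac{\alpha}{n}$, and then, by H\"older against the factor $|u|^{p-1}$, the whole nonlinearity $f:=(I_\alpha*|u|^p)|u|^{p-2}u$ lies in some $L^t(\mathbb{R}^n)$.

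First I would pin down the mapping properties of the resolvent of the mixed operator. Writing $u=(\mathcal{L}+1)^{-1}(\mu f)$, I would exploit that the Fourier symbol $1+|\xi|^2+\lambda|\xi|^{2s}$ is comparable to $1+|\xi|^2$ --- the fractional term is lower order at high frequency because $2s<2$, and harmless near $\xi=0$ --- so that $(\mathcal{L}+1)^{-1}$ inherits the $L^t\to W^{2,t}$ Calder\'on--Zygmund bound of $(-\Delta+1)^{-1}$. Composing with $W^{2,t}\hookrightarrow L^{t^{**}}$, $\tfrac{1}{t^{**}}=\tfrac1t-\tfrac2n$, returns $u$ in a strictly better Lebesgue space. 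Iterating the scheme $q\mapsto t\mapsto t^{**}$ finitely many times, and checking that the HLS exponents remain inside their admissible window at every stage, delivers $u\in L^q(\mathbb{R}^n)$ on successively larger intervals. The supremal exponent $\tfrac{n}{\alpha}2^*$ is precisely the threshold beyond which the Hardy--Littlewood--Sobolev bookkeeping for the convolution term no longer closes, which is why the range is open on the right.

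The main obstacle is the nonlocal term $\lambda(-\Delta)^s u$, which is absent from the classical Choquard regularity theory. In the resolvent route it forces a careful justification that the symbol comparison genuinely transfers the second-order Calder\'on--Zygmund estimate. If that estimate turned out to be delicate, I would instead run a Moser / Brezis--Kato iteration, testing the equation against $|u|^{q-2}u$: the local part then produces $\tfrac{4(q-1)}{q^2}\|\nabla|u|^{q/2}\|_2^2$ and, via Sobolev, a gain up to $L^{qn/(n-2)}$, while the convolution factor is absorbed by H\"older using $I_\alpha*|u|^p\in L^b$. For this variant the crucial point is the sign of the nonlocal contribution: one needs $\langle(-\Delta)^s u,\,|u|^{q-2}u\rangle\ge 0$ so that it can simply be discarded from the energy inequality. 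I would obtain this from a Stroock--Varopoulos type inequality, which here reduces to the pointwise monotonicity $(a-b)\big(a|a|^{q-2}-b|b|^{q-2}\big)\ge0$ inserted into the double integral defining $[\,\cdot\,]$. Once this sign is secured the nonlocal term only helps, and the bootstrap proceeds exactly as in the purely local case.
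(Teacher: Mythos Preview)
Your Moser/Brezis--Kato variant is exactly the route the paper takes, and the sign of the nonlocal term is handled precisely as you anticipate: testing with (a truncation of) $|u|^{q-2}u$, the paper invokes an algebraic inequality equivalent to Stroock--Varopoulos so that the fractional contribution dominates $[|u|^{q/2}]^2\ge 0$ and can be kept (or discarded) on the left. The iteration $q\mapsto \tfrac{nq}{n-2}$ is then the same as yours.

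Where the paper is more careful than your sketch is the absorption of the convolution term. ``Absorb by H\"older using $I_\alpha*|u|^p\in L^b$'' does not quite close: after H\"older you are left with $\| |u|^{p+q-2}\|_{b'}$, which involves an exponent you do not yet control, so the argument is circular at the top of each step. The paper follows Moroz--Van Schaftingen and writes the nonlinearity as $(I_\alpha*Mu)N$ with $M=N=|u|^{p-2}u\in L^{2n/\alpha}(\mathbb{R}^n)+L^{2n/(\alpha+2)}(\mathbb{R}^n)$; this decomposition feeds an absorption lemma giving, for any $\epsilon>0$,
\[
\int_{\mathbb{R}^n}(I_\alpha*|M||w|)|N||w|\le \epsilon\|\nabla w\|_2^2+C_\epsilon\|w\|_2^2,\qquad w=|u|^{q/2},
\]
so the dangerous part is genuinely absorbed into the gradient term with a small constant, and the remainder is lower order. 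The price is a two-layer approximation (truncate $M,N$ to $M_k,N_k\in L^{2n/\alpha}$, build auxiliary solutions $u_k$ by Lax--Milgram, then truncate $u_k$ to $u_{k,\gamma}$ so that the test function is legitimately in $H^1$), followed by Fatou to pass to the limit. Your outline omits this machinery, which is the actual engine behind reaching the full open range $[2,\tfrac{n}{\alpha}2^*)$.

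Your alternative resolvent route via symbol comparison with $(-\Delta+1)^{-1}$ is not used in the paper. It is a plausible strategy, and the multiplier $\dfrac{1+|\xi|^2}{1+|\xi|^2+\lambda|\xi|^{2s}}$ is indeed a H\"ormander--Mikhlin multiplier, but tracking the HLS exponents through the $L^t\to W^{2,t}\hookrightarrow L^{t^{**}}$ iteration so as to hit exactly the endpoint $\tfrac{n}{\alpha}2^*$ would still require the same kind of splitting of the nonlinearity; the paper's test-function approach avoids having to justify global Calder\'on--Zygmund for the mixed operator.
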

	\noindent	Using Theorem 1.1 we obtain Sobolev regularity of weak solutions:
	\begin{mytheorem}\label{thm1.2}
		Let $n\geq 3$, $n-\alpha<4$, $2\leq p \leq \frac{n+\alpha}{n-2}$ and $u\in H^1(\mathbb{R}^n)$ is a solution of \eqref{prob}, then $u\in W^{2,q}_{loc}(\mathbb{R}^n)$ for all $q\geq 1$.
	\end{mytheorem}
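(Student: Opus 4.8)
The plan is to rewrite \eqref{prob} as a semilinear equation governed by its second order leading part and to run an $L^q$ bootstrap fed by the integrability already obtained in Theorem~\ref{thm 1.1}. Writing $g(u):=(I_\alpha*|u|^p)|u|^{p-2}u$, the equation reads $\mathcal{L}u+u=\mu g(u)$, i.e.
\[
-\Delta u + u = \mu g(u) - \lambda(-\Delta)^s u .
\]
The operator $\mathcal{L}+1=-\Delta+\lambda(-\Delta)^s+1$ has Fourier symbol $1+\lambda|\xi|^{2s}+|\xi|^2$, which is elliptic of order two with the same principal part $|\xi|^2$ as $1-\Delta$; the guiding principle is thus that the solution gains two derivatives over the right-hand side. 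The whole difficulty is to show that $g(u)$ is locally as integrable as we please and that the subordinate nonlocal term $(-\Delta)^s u$ does not spoil this gain.

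First I would control the Riesz potential. Using that $u\in L^q(\mathbb{R}^n)$ for every $q\in[2,\frac{2n^2}{\alpha(n-2)})$ by Theorem~\ref{thm 1.1}, and splitting
\[
(I_\alpha*|u|^p)(x)=A_{n,\alpha}\!\int_{|x-y|<1}\frac{|u(y)|^p}{|x-y|^{n-\alpha}}\,dy + A_{n,\alpha}\!\int_{|x-y|\ge 1}\frac{|u(y)|^p}{|x-y|^{n-\alpha}}\,dy,
\]
the far part is bounded by $A_{n,\alpha}\|u\|_p^p$ (finite since $2\le p<\frac{2n}{n-2}$ forces $u\in L^p$ by interpolation), while for the near part I would apply H\"older with an exponent $r<\frac{n}{n-\alpha}$ on the locally integrable kernel and the conjugate exponent $r'>\frac{n}{\alpha}$ on $|u|^p$. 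The point is that $p<\frac{2n}{n-2}$ (a consequence of $p\le\frac{n+\alpha}{n-2}$ and $\alpha<n$) gives $\frac{pn}{\alpha}<\frac{2n^2}{\alpha(n-2)}$, so one may pick $r'$ with $pr'$ still inside the range of Theorem~\ref{thm 1.1}. This yields $I_\alpha*|u|^p\in L^\infty(\mathbb{R}^n)$, hence the pointwise bound $|g(u)|\le C|u|^{p-1}$, so that $u\in L^\sigma$ immediately gives $g(u)\in L^{\sigma/(p-1)}$.

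Next I would iterate. Granting the $W^{2,q}$-regularity theory for the mixed operator, one step reads $u\in L^\sigma\Rightarrow g(u)\in L^{\sigma/(p-1)}_{loc}\Rightarrow u\in W^{2,\sigma/(p-1)}_{loc}$, and, as long as $\frac{\sigma}{p-1}<\frac n2$, the Sobolev embedding gives $u\in L^{\sigma_1}_{loc}$ with $\frac1{\sigma_1}=\frac{p-1}{\sigma}-\frac2n$. This strictly improves the exponent exactly when $\frac1\sigma-\frac1{\sigma_1}=\frac2n-\frac{p-2}{\sigma}>0$, i.e. $\sigma>\frac{n(p-2)}2$; at the top value $\sigma\nearrow\frac{2n^2}{\alpha(n-2)}$ this amounts to $p<2+\frac{4n}{\alpha(n-2)}$, which holds because $p\le\frac{n+\alpha}{n-2}$ and the inequality $\frac{n+\alpha}{n-2}<2+\frac{4n}{\alpha(n-2)}$ reduces to $(\alpha-n)(\alpha+4)<0$, true since $0<\alpha<n$; the role of the standing hypothesis $n-\alpha<4$ is to render the admissible interval $\big[2,\frac{n+\alpha}{n-2}\big]$ non-degenerate, i.e. $\frac{n+\alpha}{n-2}>2$. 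Since the gain in $\frac1\sigma$ stays bounded below, after finitely many iterations $\frac{\sigma}{p-1}\ge\frac n2$; crossing this critical threshold yields $u\in L^q_{loc}$ for all $q<\infty$, hence $u\in L^\infty_{loc}$. Consequently $g(u)\in L^\infty_{loc}\subset L^q_{loc}$ for every $q$, and a last application of the regularity theory gives $u\in W^{2,q}_{loc}$ for all $q\ge1$.

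The main obstacle is the subordinate nonlocal term $(-\Delta)^s u$ in the elliptic estimate: one must justify that $\mathcal{L}+1$ genuinely behaves as a second order elliptic operator on $L^q$. I would do this either by the Mikhlin--H\"ormander multiplier theorem applied to the symbol $\frac{1+|\xi|^2}{1+\lambda|\xi|^{2s}+|\xi|^2}$, whose derivatives satisfy $|\partial^\beta m(\xi)|\le C_\beta|\xi|^{-|\beta|}$ (one checks this separately near the origin, where the non-smooth contribution is $\sim|\xi|^{2s}$ with $2s>0$, and at infinity, where $1-m\sim|\xi|^{2s-2}\to0$), so that $(1-\Delta)(\mathcal{L}+1)^{-1}$ is bounded on $L^q$ for $1<q<\infty$; or by quoting the Calder\'on--Zygmund theory for mixed operators in the cited literature. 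The multiplier bound is in fact global, so the first iterations can be carried out on all of $\mathbb{R}^n$; for the final local conclusion one localizes with cut-offs and absorbs the commutators with $(-\Delta)^s$, which are of lower order, using the regularity already gained. The remaining points (the borderline Sobolev exponents and the pointwise meaning of $(-\Delta)^s u$, ensured by $u\in H^1(\mathbb{R}^n)$) are routine.
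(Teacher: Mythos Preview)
Your argument is correct and the strategy is sound, but it differs substantially from the route taken in the paper. Both proofs begin the same way, showing $I_\alpha*|u|^p\in L^\infty(\mathbb{R}^n)$ via the near/far splitting and the integrability from Theorem~\ref{thm 1.1}. From there the paper proceeds by a hands-on Moser iteration: it derives a Kato-type inequality for $|u|$, tests with truncated powers $u_\gamma^\beta$, and obtains the recursion $\|u\|_{2^*k}\le C_k\|u\|_{4nk/(n+\alpha)}$, which self-improves precisely because $\tfrac{2^*(n+\alpha)}{4n}>1$ under the hypothesis $n-\alpha<4$; this yields $u\in L^q(\mathbb{R}^n)$ for all $q\ge2$. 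The paper then quotes a local boundedness result for mixed operators to get $u\in L^\infty_{loc}$, splits $(-\Delta)^s=\mathcal L_1+\mathcal L_2$ into a near (integro-differential) and a far ($L^q$) part, and finally invokes Calder\'on--Zygmund theory for second-order integro-differential operators to conclude $u\in W^{2,q}_{loc}$. Your approach bypasses both the Moser iteration and the near/far splitting of the fractional Laplacian by treating $\mathcal{L}+1$ as a single Fourier multiplier and invoking Mikhlin--H\"ormander to obtain the global $L^q\to W^{2,q}$ estimate directly; the bootstrap is then a standard Sobolev iteration on the exponent of $g(u)$. Your route is conceptually cleaner and, as you observe, uses $n-\alpha<4$ only to make the interval $[2,\tfrac{n+\alpha}{n-2}]$ non-degenerate, whereas the paper uses it materially in the Moser step. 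The trade-off is that your multiplier argument requires some care with the singularity of $|\xi|^{2s}$ at the origin and with passing from the global $L^q$ estimate to the identification $u=(\mathcal{L}+1)^{-1}(\mu g(u))$ (you need, at least at the first step, that $g(u)\in L^2$ so that the Fourier representation is honest; this holds since $|g(u)|\le C|u|^{p-1}$ and $2(p-1)$ lies in the range of Theorem~\ref{thm 1.1}). The paper's approach trades this Fourier-analytic care for dependence on two external regularity results from the literature on mixed local--nonlocal operators.
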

	\noindent An important tool to establish the above regularity results is the following well-known Hardy-Littlewood-Sobolev inequality (see \cite{Moroz2017guide}):
	
	\begin{pro}\label{prop1.1}
		Let $t,r>1$ and $0<\alpha <n$ with $1/t+1/r=1+\alpha/n$, $f\in L^t(\mathbb{R}^n)$ and $h\in L^r(\mathbb{R}^n)$. There exists a sharp constant $C(t,\alpha,n)$ independent of $f,h$, such that
		\begin{equation}\label{co9}
			\int_{\mathbb{R}^n}\int_{\mathbb{R}^n}\frac{f(x)h(y)}{|x-y|^{n-\alpha}}~dxdy \leq C(t,\alpha,n) \|f\|_{L^t}\|h\|_{L^r}.
		\end{equation}
		If $t=r=2n/(n+\alpha)$, then
		\begin{align}\label{C_alpha}
			C(t,\alpha,n)=C(n,\alpha)= \pi^{\frac{n-\alpha}{2}}\frac{\Gamma(\frac{\alpha}{2})}{\Gamma(\frac{n+\alpha}{2})}\left\lbrace \frac{\Gamma(\frac{n}{2})}{\Gamma(n)}\right\rbrace^{-\frac{\alpha}{n}}.
		\end{align}
		Equality holds in  \eqref{co9} if and only if $\frac{f}{h}\equiv constant$ and
		$\displaystyle h(x)= A(\gamma^2+|x-a|^2)^{(N+\alpha)/2}$
		for some $A\in \mathbb{C}, 0\neq \gamma \in \mathbb{R}$ and $a \in \mathbb{R}^n$.
	\end{pro}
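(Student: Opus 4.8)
The plan is to follow the classical route to the Hardy--Littlewood--Sobolev inequality, separating the finiteness of the constant in \eqref{co9} from the identification of the sharp value \eqref{C_alpha} together with the equality cases. The first and most useful reduction is symmetrization: by the Riesz rearrangement inequality, replacing $f$ and $h$ by their symmetric decreasing rearrangements $f^*$ and $h^*$ leaves the $L^t$ and $L^r$ norms unchanged, while it can only increase the double integral, since the kernel $|x-y|^{-(n-\alpha)}$ is itself a strictly decreasing radial function and hence equals its own rearrangement. Thus it suffices to prove \eqref{co9} for nonnegative, radially symmetric, nonincreasing $f$ and $h$, and this reduction is precisely what will control the equality discussion at the end.

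For the inequality with a finite (non-sharp) constant I would split the kernel at a free radius $\rho>0$, writing $|x-y|^{-(n-\alpha)} = K_\rho^{\mathrm{near}} + K_\rho^{\mathrm{far}}$ with $K_\rho^{\mathrm{near}}$ supported on $\{|x-y|<\rho\}$ and $K_\rho^{\mathrm{far}}$ on its complement. Each piece lies in a suitable $L^a(\mathbb{R}^n)$, so Young's convolution inequality bounds the contributions of $K_\rho^{\mathrm{near}}*h$ and $K_\rho^{\mathrm{far}}*h$ pointwise in terms of the Hardy--Littlewood maximal function of $h$ and of $\|h\|_r$; optimizing over $\rho$ at each point yields a Hedberg-type pointwise estimate for the Riesz potential $I_{n-\alpha}*h$. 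Pairing with $f$ through Hölder's inequality, together with the maximal function theorem, then gives \eqref{co9} with some finite $C(t,\alpha,n)$. Equivalently, one may establish the weak-type bounds at the two endpoints and invoke the Marcinkiewicz interpolation theorem.

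The genuinely hard part is the sharp constant \eqref{C_alpha} and the rigidity in the equality case, which live in the conformally invariant diagonal regime $t=r=2n/(n+\alpha)$. Here I would first prove existence of an optimizer among radial functions: after normalizing $\|f\|_t=\|h\|_r=1$, a maximizing sequence is precompact modulo the dilation and translation symmetries, and a concentration--compactness argument excludes vanishing and dichotomy, producing a radial maximizing pair. The Euler--Lagrange equation for the maximizer reads, up to constants, $I_{n-\alpha}*f = f^{t-1}$, which is conformally covariant; transporting the problem to the sphere $S^n$ via stereographic projection turns the conformal group into the Möbius group of $S^n$, and a symmetrization plus uniqueness argument forces the optimizer to be a conformal image of the constant function on $S^n$. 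Pulling back produces exactly the bubble profiles $\bigl(\gamma^2+|x-a|^2\bigr)^{-(n+\alpha)/2}$, and evaluating the functional on one such profile gives the explicit value \eqref{C_alpha}. Finally, equality in \eqref{co9} propagates back through the rearrangement step, whose own equality case forces $f$ and $h$ to be, up to a common translation, symmetric decreasing and mutually proportional, completing the characterization. I expect this last block---the conformal identification of the extremals and the evaluation of the sharp constant---to be the main obstacle, since it is exactly the content of Lieb's theorem and cannot be obtained by soft functional-analytic arguments alone.
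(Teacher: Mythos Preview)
The paper does not actually prove Proposition~\ref{prop1.1}: it is stated as the well-known Hardy--Littlewood--Sobolev inequality with an explicit citation to \cite{Moroz2017guide} (and, implicitly, to Lieb's original work), and is then used as a tool throughout the rest of the paper without further justification. So there is no ``paper's own proof'' to compare against.

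Your outline is a correct and standard sketch of how one establishes the full result, essentially following Lieb's original argument: Riesz rearrangement to reduce to radial decreasing functions, a Hedberg-type pointwise bound (or Marcinkiewicz interpolation) for the finiteness of the constant, existence of an optimizer via compactness modulo symmetries, and the conformal identification on $S^n$ to pin down the extremal profiles and the explicit constant \eqref{C_alpha}. One minor remark: the existence step via concentration--compactness is the Lions variant; Lieb's own argument bypassed this by exploiting the strict rearrangement inequality and a direct compactness lemma for radial decreasing functions, which is slightly more economical here since you have already reduced to that class. Either route works. You are also right that the conformal rigidity step is the substantive core and cannot be shortcut---this is precisely why the paper quotes the result rather than reproving it.
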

	\noindent From this inequality, it follows that 
	\begin{align*}
		{\mathcal{A}_q(u):=	\int_{\mathbb R^n}\int_{\mathbb R^n}\frac{|u(x)|^{q}|u(y)|^{q}}{|x-y|^{n-\alpha}}~dxdy}
	\end{align*}
	is well defined if $\frac{n+\alpha}{n}\leq q \leq \frac{n+\alpha}{n-2}=2^*_{\alpha} $. The exponent $q = 2^*_{\alpha}$ is known as Hardy-Littlewood-Sobolev critical exponent and similarly to the usual critical exponent, $H^1_0(\Omega)\ni\, u\to\, \mathcal{A}_{2^*_\alpha}(u)$ is continuous for the norm topology but not for the weak topology. \\
	The existence issue has been discussed for problems involving local operators.  In particular, in \cite{lei2023sufficient},  Lei, Yang and Zhang have proved the equivalence of normalized solutions and ground state solutions to the problem:
	\begin{equation}\label{Norm_Choq_suff&ness}
		\left\{ \begin{array}{rl}   	
			-\Delta u  + u& =\mu (I_\alpha*|u|^p)|u|^{p-2}u\;\;\text{in } \mathbb{R}^n,\\
			\left\| u \right\|_2^2 & =  \tau,
		\end{array}
		\right.
	\end{equation}   
	for the case $\frac{n+\alpha}{n}<p<\frac{2+n+\alpha}{n}$ and hence obtained the existence of weak solutions to \eqref{Norm_Choq_suff&ness}. Further they deduced nonexistence results for $p=\frac{n+\alpha}{n}$, $p=\frac{n+\alpha}{n-2}$ and $p=\frac{2+n+\alpha}{n}$. By a ground state solution, we mean here a solution to the constrained minimization problem given by:
	\begin{equation}\label{groundState}
		\left\{ \begin{array}{rl}   	
			&  \left\| u \right\|_2^2 =\tau,   \\
			&  F(u) = \min\{F(w)\;:\; w\in H^1(\mathbb{R}^n) \text{ such that } \left\| u \right\|_2^2=\tau \}, 
		\end{array}
		\right.
	\end{equation}
	where $F$ is the energy functional associated to \eqref{Norm_Choq_suff&ness}. We extend similar results for our problem involving mixed type operators.
	The following notations will be used through out this paper:
	$$H(u):=\left\|u\right\|_2^2,\;\;T(u) := \left\|\nabla u\right\|_2^2+\lambda[u]^2,\;\;\Gamma := \{u\in H^1(\mathbb{R}^n)\;:\;H(u)=\tau\},$$
	$$A:=\{u\in H^1(\mathbb{R}^n)\;:\; u\neq0\text{ and }S_{\lambda}'(u)(u)=0\},\;\;	G:=\{u\in A\;:\;S_{\lambda}(u)\leq S_{\lambda}(v)\text{ for all }v\in A\}. $$
	Results corresponding to the problem \eqref{prob} are as follows:
	\begin{mytheorem}\label{Theorem 1.3}
		For $\frac{n+\alpha}{n}<p<\frac{2s+n+\alpha}{n}$, $s\in(0,1)$, $\mu>0$ and $\lambda>0$ being sufficiently small, the problem \eqref{prob} has a weak solution.
	\end{mytheorem}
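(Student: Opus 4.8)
The plan is to realize the desired solution as a minimizer of the energy $S_\lambda$ on the mass constraint $\Gamma=\{u\in H^1(\mathbb{R}^n):H(u)=\tau\}$, and to set $m(\tau):=\inf_{\Gamma}S_\lambda$. Since $\tfrac12\|u\|_2^2=\tfrac\tau2$ is constant on $\Gamma$, minimizing $S_\lambda$ there is equivalent to minimizing $E(u):=\tfrac12 T(u)-\tfrac{\mu}{2p}\mathcal{A}_p(u)$, so that $m(\tau)=\tfrac\tau2+\inf_\Gamma E$. First I would show that $S_\lambda$ is bounded below and coercive on $\Gamma$. Combining the Hardy--Littlewood--Sobolev inequality (\Cref{prop1.1}) with $t=r=\tfrac{2n}{n+\alpha}$ and the Gagliardo--Nirenberg inequality gives
\[
\mathcal{A}_p(u)\le C\,\|u\|_{L^{2np/(n+\alpha)}}^{2p}\le C'\,\|\nabla u\|_2^{\,np-n-\alpha}\,\|u\|_2^{\,2p-(np-n-\alpha)},
\]
where the hypothesis $p<\tfrac{2s+n+\alpha}{n}$ is exactly what forces the exponent $np-n-\alpha<2s<2$ (and $p>\tfrac{n+\alpha}{n}$ keeps the intermediate exponent $q=\tfrac{2np}{n+\alpha}$ above $2$). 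On $\Gamma$ one then gets $S_\lambda(u)\ge \tfrac12\|\nabla u\|_2^2+\tfrac{\lambda}{2}[u]^2+\tfrac\tau2-C''\|\nabla u\|_2^{\,np-n-\alpha}$ with a subquadratic last term, yielding both coercivity in $T(u)$ and boundedness below. Using the mass-preserving fibering $u_t(x)=t^{n/2}u(tx)$, for which $T$ scales with powers $t^2$ and $t^{2s}$ while $\mathcal{A}_p$ scales with $t^{np-n-\alpha}$, the lowest power belongs to the negative nonlinear term, so $S_\lambda(u_t)<\tfrac\tau2$ for small $t$; hence $m(\tau)<\tfrac\tau2$, which will serve to rule out vanishing of minimizing sequences.

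Next I would recover compactness for a minimizing sequence $(u_k)\subset\Gamma$. Coercivity makes $(u_k)$ bounded in $H^1(\mathbb{R}^n)$, but the translation invariance of the problem blocks strong convergence. I would remove this degeneracy by passing to symmetric-decreasing rearrangements: the Riesz rearrangement inequality does not decrease the Choquard term $\mathcal{A}_p$, while the P\'olya--Szeg\H{o} inequality and its fractional analogue for $[\cdot]$ do not increase $T$, so one may assume each $u_k$ is radial and radially decreasing without raising $S_\lambda$. On the radial subspace the compact embedding $H^1_{\mathrm{rad}}(\mathbb{R}^n)\hookrightarrow\hookrightarrow L^q(\mathbb{R}^n)$ for $2<q<\tfrac{2n}{n-2}$ (with $q=\tfrac{2np}{n+\alpha}$ admissible in the present range of $p$) lets me pass to the limit in $\mathcal{A}_p$, while weak lower semicontinuity controls $T$ and $\|\cdot\|_2$. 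The remaining point is that the weak limit $u$ still lies on $\Gamma$, i.e.\ no mass leaks in $L^2$ (where the embedding is not compact); I would secure this from the strict inequality $m(\tau)<\tfrac\tau2$ together with a strict subadditivity relation $m(\tau)<m(\theta)+m(\tau-\theta)$ for $0<\theta<\tau$, in the spirit of concentration--compactness. This compactness step is where I expect the bulk of the routine technical work to lie.

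The minimizer $u$ then satisfies the Euler--Lagrange equation with a Lagrange multiplier $\Lambda$,
\[
\mathcal{L}u+u=\Lambda u+\mu\,(I_\alpha*|u|^p)|u|^{p-2}u,
\]
so that to land exactly on \eqref{prob} I must identify the coefficient of $u$ with $1$, i.e.\ show $\Lambda=0$ (equivalently, pin the frequency to the prescribed value). For the purely local Choquard problem this is achieved by a combined amplitude/dilation scaling, but the mixed operator $\mathcal{L}$ is not homogeneous under dilations, and this is precisely the difficulty that the hypothesis ``$\lambda>0$ sufficiently small'' is meant to overcome: I would treat $\lambda(-\Delta)^s$ as a perturbation of $-\Delta$ and run a scaling-cum-continuity argument, matching the Nehari/Pohozaev-type identity obtained by differentiating the fibering $t\mapsto S_\lambda(u_t)$ at its minimum, so as to adjust the amplitude--mass relation until the multiplier equals the required frequency. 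I expect this multiplier identification --- rather than the minimization itself --- to be the main obstacle, and it is here that the three structural assumptions ($p$ sub-$L^2$-critical for $\mathcal{L}$, $\mu>0$, and $\lambda$ small) are genuinely used. Finally, $u$ is nonnegative after rearrangement and, by \Cref{thm 1.1}, enjoys the integrability needed to make every term in the weak formulation finite, so $u$ is a weak solution of \eqref{prob}.
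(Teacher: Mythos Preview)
Your compactness strategy via symmetric-decreasing rearrangement is a genuine and cleaner alternative to the paper's route. The paper does not rearrange; it runs a full concentration--compactness splitting on a Palais--Smale type minimizing sequence, extracts weak limits $v^\lambda_0,v^\lambda_1,\dots$ that are critical points of the shifted functional, proves an ``asymptotic'' Pohozaev identity for the sequence, and then rules out mass splitting by a delicate scaling contradiction (Claim~3 of Lemma~3.2) that already consumes the small-$\lambda$ hypothesis. Your plan bypasses all of this: after rearrangement one has $\mathcal{A}_p(u_k)\to\mathcal{A}_p(u)$ by Strauss compactness, and the only issue is $\|u\|_2^2=\theta\le\tau$. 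Here what you actually need is strict \emph{monotonicity} of $\theta\mapsto m(\theta):=\inf_{\Gamma_\theta}E$, not subadditivity, and it follows from the pure dilation $w(x)=u(x/\sigma)$ with $\sigma^n=\tau/\theta$: one checks $E(w)<\sigma^n E(u)$ term by term (since $n-2<n-2s<n<n+\alpha$), whence $m(\tau)\le\sigma^n m(\theta)<m(\theta)$ because $m(\theta)<0$ by the fibering computation you already did. This works for every $\lambda>0$, so your approach in fact shows the minimizer exists without any smallness of~$\lambda$, which the paper's compactness argument does not.

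The gap is in the Lagrange-multiplier step. You correctly locate the obstruction---the minimizer solves $\mathcal{L}u+\delta u=\mu(I_\alpha*|u|^p)|u|^{p-2}u$ for some $\delta>0$, and one must force $\delta=1$---but ``treat $\lambda(-\Delta)^s$ as a perturbation and run a scaling-cum-continuity argument'' is not a plan. The paper's mechanism (Lemma~3.3) is specific and not perturbative in that sense: one rescales $u(x)=\delta^{\frac{2+\alpha}{4(p-1)}}v(\delta^{1/2}x)$ so that $v$ solves the equation with coefficient $1$ in front of $v$ but with $\lambda$ replaced by $\delta^{s-1}\lambda$, then compares $S_\lambda(u)$ and $S_\lambda(v)$ to obtain an algebraic relation $\tfrac{S_\lambda(u)-\lambda K_\delta}{S_\lambda(v)}=-A\delta^B+B\delta^A$; a case analysis on the right-hand side pins $\delta=1$, with the smallness of $\lambda$ used to kill the $\lambda K_\delta$ correction in the bad case. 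Your sketch does not supply any substitute for this step, and without it the minimizer you produce solves the wrong equation. (A minor point: invoking \Cref{thm 1.1} at the end is circular, since that theorem assumes $u$ solves \eqref{prob}; you would need the analogous regularity for the $\delta$-equation, which holds by the same proof.)
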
 
	\noindent We investigate the existence of ground state solutions by considering the following minimization problem:
	\begin{equation}\label{minS}
		\left\{ \begin{array}{rl}   	
			&  u \in \Gamma,   \\
			&  S_{\lambda}(u) = \min\{S_{\lambda}(w)\;:\; w\in \Gamma \}.
		\end{array}
		\right.
	\end{equation}
	Then we prove the following result.
	\begin{mytheorem}\label{Theorem 1.4}
		For $\frac{n+\alpha}{n}<p<\frac{2s+n+\alpha}{n}$, $s\in(0,1)$, $\mu>0$ and $\lambda>0$ being sufficiently small, the problems \eqref{prob} and \eqref{minS} are equivalent.
	\end{mytheorem}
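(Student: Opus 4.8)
The plan is to establish the equivalence as a two--way correspondence between weak solutions of \eqref{prob} and solutions of the constrained minimization \eqref{minS}, organised around the Lagrange multiplier. The structural remark that drives everything is that, since $S_\lambda$ already carries the term $\frac12\|u\|_2^2$, the equation $S_\lambda'(u)=0$ is exactly the weak formulation of the PDE in \eqref{prob}; hence $u$ solves \eqref{prob} if and only if $u$ is a \emph{free} critical point of $S_\lambda$ lying on $\Gamma$. A minimizer of $S_\lambda$ on $\Gamma$, by contrast, only satisfies $S_\lambda'(u)=\beta u$ for some multiplier $\beta\in\mathbb{R}$, that is $\mathcal{L}u+(1-\beta)u=\mu(I_\alpha*|u|^p)|u|^{p-2}u$. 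The whole statement thus reduces to (i) producing such a minimizer, and (ii) reconciling the frequency $1-\beta$ with the normalization $+u$ demanded by \eqref{prob}.

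First I would prove that \eqref{minS} is attained. Using Proposition \ref{prop1.1} together with a Gagliardo--Nirenberg interpolation one obtains $\int_{\mathbb{R}^n}(I_\alpha*|u|^p)|u|^p\le C\,T(u)^{\theta p}H(u)^{(1-\theta)p}$ with $\theta p<1$, the $L^2$--subcriticality being guaranteed by $p<\frac{2s+n+\alpha}{n}$; consequently $S_\lambda$ is bounded below and coercive on $\Gamma$. To defeat the loss of compactness on $\mathbb{R}^n$ I would pass from a minimizing sequence to its Schwarz symmetric decreasing rearrangement: the $L^2$--norm is preserved, so one stays on $\Gamma$; the Dirichlet and Gagliardo seminorms do not increase by the P\'olya--Szeg\H{o} inequality in its local and fractional forms; while $\mathcal{A}_p$ does not decrease by the Riesz rearrangement inequality. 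Hence the energy does not increase, and one may work with radial functions, which embed compactly in $L^q$ for $2<q<\frac{2n}{n-2}$. Ruling out vanishing and dichotomy through a strict subadditivity inequality for $\tau\mapsto\inf_\Gamma S_\lambda$ then yields a nonnegative radial minimizer $u\in\Gamma$ and the Euler--Lagrange equation above.

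The next step is to extract the two homogeneity identities of the minimizer. Testing with $u$ gives the Nehari identity $T(u)+(1-\beta)\tau=\mu\int_{\mathbb{R}^n}(I_\alpha*|u|^p)|u|^p$, while the dilation $u(\cdot/\sigma)$ gives the Pohozaev identity $\frac{n-2}{2}\|\nabla u\|_2^2+\frac{n-2s}{2}\lambda[u]^2+\frac{n(1-\beta)}{2}\tau=\frac{n+\alpha}{2p}\mu\int_{\mathbb{R}^n}(I_\alpha*|u|^p)|u|^p$; the regularity furnished by Theorems \ref{thm 1.1} and \ref{thm1.2} legitimises both, in particular the Pohozaev identity for the nonlocal term. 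Eliminating the nonlinear integral between them and using $H(u)=\tau$ leaves a single linear relation among $\|\nabla u\|_2^2$, $\lambda[u]^2$ and $\tau$, from which the sign of $\beta$ (negative, as expected in this focusing subcritical regime) can be read off.

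The delicate point, and the reason $\lambda$ must be small, is step (ii): turning the minimizer of frequency $1-\beta$ into a solution of \eqref{prob} of frequency $1$, and conversely. For a purely local or purely fractional operator the single--exponent scaling $u\mapsto a\,u(b\cdot)$ absorbs the factor $1-\beta$ and restores the normalization at the mere cost of renaming $\mu$ and $\tau$; this is precisely the mechanism behind the Lei--Yang--Zhang equivalence for \eqref{Norm_Choq_suff&ness}. For $\mathcal{L}=-\Delta+\lambda(-\Delta)^s$ the Dirichlet part rescales by $b^2$ and the Gagliardo part by $b^{2s}$ with $s\neq1$, so no single dilation preserves the form of the equation and this scaling breaks down --- this is the main obstacle. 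I would circumvent it by treating $\lambda(-\Delta)^s$ as a small perturbation of the local problem: for $\lambda$ (and $\mu$) small the minimizer, the multiplier $\beta=\beta(\lambda)$ and the Nehari--Pohozaev system depend continuously on $\lambda$ and collapse at $\lambda=0$ onto the known local equivalence, and a control of $\beta(\lambda)$ uniform in small $\lambda$ transfers that equivalence to $\lambda>0$ small. The converse inclusion, that every weak solution of \eqref{prob} realises the constrained minimum, I would obtain by comparing energies over $\Gamma$ with the least energy over the Nehari manifold $A$ and the ground--state set $G$: a solution of \eqref{prob} lies in $A\cap\Gamma$, and the same scaling, valid for $\lambda$ small, identifies it with the minimizer of $S_\lambda$ on $\Gamma$, thereby closing the equivalence.
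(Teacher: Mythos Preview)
Your outline correctly identifies the crux --- the Lagrange multiplier discrepancy --- but the mechanism you propose in step (ii) does not close the gap. Knowing that the system ``collapses at $\lambda=0$ onto the known local equivalence'' and that $\beta(\lambda)$ is controlled uniformly does not force $\beta(\lambda)=0$ for small $\lambda>0$: continuity of $\beta$ in $\lambda$ gives $\beta(\lambda)\to\beta(0)$, not $\beta(\lambda)=0$. In fact even at $\lambda=0$ the Lagrange multiplier of the constrained minimizer is not a priori zero; the local equivalence of Lei--Yang--Zhang is itself obtained by an algebraic comparison, not by scaling alone. The paper does something concrete here: writing the multiplier as $\delta$ (so $\mathcal{L}u+\delta u=\mu(I_\alpha*|u|^p)|u|^{p-2}u$), it rescales $u(x)=\delta^{\frac{2+\alpha}{4(p-1)}}v(\delta^{1/2}x)$, so that $v$ solves the frequency-$1$ equation with $\lambda$ replaced by $\delta^{s-1}\lambda$, and then compares $S_\lambda(u)$ with $S_\lambda(v)$ through the auxiliary function $f(t)=-At^B+Bt^A-\frac{S_\lambda(u)-\lambda K_\delta}{S_\lambda(v)}$, where $A,B$ are explicit exponents and $K_\delta$ is the $O(\lambda)$ defect produced by the mismatched scalings of the two operators. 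A case analysis on the sign and size of the ratio, using the shape of $f$ and the smallness of $\lambda$ (and a bound on $\mu$) to kill the $K_\delta$ correction, forces $\delta=1$. Your perturbation heuristic does not reproduce this rigidity.

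Your converse direction is likewise too compressed. Saying that ``a solution of \eqref{prob} lies in $A\cap\Gamma$, and the same scaling identifies it with the minimizer'' skips the actual comparison. The paper argues through the ground-state set $G$: for an arbitrary $w\in G$ with $H(w)=\gamma$ it uses the two-parameter rescaling $w(x)=\sigma^a v(\sigma^b x)$ with $\sigma=\tau/\gamma$ and specific $a,b$ to manufacture a competitor on $\Gamma$, and then the resulting inequality between $I_\lambda(u)$ and $\sigma^{1-2b}I_\lambda(w)$, after sending $\lambda\to0^+$ in the $O(\lambda)$ remainder, forces $\sigma\le1$, i.e.\ $\gamma\ge\tau$. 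This is the step where ``$\lambda$ sufficiently small'' enters in the converse, and it is not visible in your sketch. Finally, your existence argument via Schwarz symmetrization and radial compactness is a legitimate alternative to the paper's concentration-compactness/profile-decomposition route, but note that the paper extracts from the latter an ``asymptotic'' Pohozaev identity for the minimizing sequence and a uniform (in small $\lambda$) lower bound $|\Lambda_0^\lambda|\ge C>0$ on the limiting multiplier, both of which feed into the subsequent algebraic comparisons; if you go the symmetrization route you must still supply analogues of these inputs.
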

	\noindent We believe that theorems \ref{Theorem 1.3} and \ref{Theorem 1.4} hold for any $\lambda>0$ and $\frac{n+\alpha}{n}<p<\frac{n+\alpha}{n-2}$. In this regard, we deal with nonexistence results and show that as $p$ takes the upper critical ($p=\frac{n+\alpha}{n-2}$) or lower critical value ($p=\frac{n+\alpha}{n}$) with respect to the above mentioned Hardy-Littlewood-Sobolev inequality, the problem \eqref{prob} has no solution. Precisely, we establish:
	\begin{mytheorem}\label{Theorem 1.5}
		Assume that $p=\frac{n+\alpha}{n}$ or $p=\frac{n+\alpha}{n-2}$, then the problem \eqref{prob} has no weak solution in $ H^1(\mathbb{R}^n)$.
	\end{mytheorem}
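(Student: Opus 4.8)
The plan is to combine the Nehari identity with a Pohozaev identity for the mixed operator $\mathcal{L}$, and then to observe that at each of the two critical exponents the resulting linear relation among $\|\nabla u\|_2^2$, $[u]^2$ and $\|u\|_2^2$ has coefficients of a single sign, which forces $u\equiv 0$ and contradicts the mass constraint $\|u\|_2^2=\tau>0$. First I would record the \emph{Nehari identity}: testing the weak formulation of \eqref{prob} with $v=u$ (admissible since $u\in H^1(\mathbb{R}^n)$) gives
$$\|\nabla u\|_2^2+\lambda[u]^2+\|u\|_2^2=\mu\, D(u),\qquad D(u):=\int_{\mathbb{R}^n}(I_{\alpha}*|u|^p)|u|^p\,dx.$$

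The technical heart of the argument is the \emph{Pohozaev identity}, which I would obtain by testing (formally) with the dilation generator $x\cdot\nabla u$, equivalently by differentiating $S_\lambda(u_\sigma)$ along $u_\sigma(x):=u(\sigma x)$ at $\sigma=1$. The scaling laws $\|\nabla u_\sigma\|_2^2=\sigma^{2-n}\|\nabla u\|_2^2$, $[u_\sigma]^2=\sigma^{2s-n}[u]^2$, $\|u_\sigma\|_2^2=\sigma^{-n}\|u\|_2^2$ and $D(u_\sigma)=\sigma^{-(n+\alpha)}D(u)$ translate, via the classical relation $\int(-\Delta u)(x\cdot\nabla u)=-\tfrac{n-2}{2}\|\nabla u\|_2^2$ and its fractional counterpart $\int(-\Delta)^s u\,(x\cdot\nabla u)=-\tfrac{n-2s}{2}[u]^2$, into
$$(n-2)\|\nabla u\|_2^2+\lambda(n-2s)[u]^2+n\|u\|_2^2=\frac{n+\alpha}{p}\,\mu\, D(u).$$
Eliminating $\mu D(u)$ between this and the Nehari identity yields the key relation
$$\Big(n-2-\tfrac{n+\alpha}{p}\Big)\|\nabla u\|_2^2+\lambda\Big(n-2s-\tfrac{n+\alpha}{p}\Big)[u]^2+\Big(n-\tfrac{n+\alpha}{p}\Big)\|u\|_2^2=0.$$

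I would then specialize to the two critical exponents. For $p=\frac{n+\alpha}{n}$ one has $\frac{n+\alpha}{p}=n$, so the relation collapses to $-2\|\nabla u\|_2^2-2s\lambda[u]^2=0$; since $\lambda,s>0$ both terms are nonnegative, whence $\|\nabla u\|_2=[u]=0$, so $u$ is constant and therefore $u\equiv 0$ in $H^1(\mathbb{R}^n)$, contradicting $\|u\|_2^2=\tau>0$. For $p=\frac{n+\alpha}{n-2}$ one has $\frac{n+\alpha}{p}=n-2$, and the relation becomes $2\lambda(1-s)[u]^2+2\|u\|_2^2=0$; here the coefficient of $\|u\|_2^2$ is strictly positive, so already $\|u\|_2=0$, again contradicting the constraint. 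In either case \eqref{prob} admits no weak solution.

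The main obstacle is the rigorous justification of the Pohozaev identity for $\mathcal{L}$, since $x\cdot\nabla u\notin H^1(\mathbb{R}^n)$ and cannot simply be inserted into the weak formulation. I would instead work with truncated dilations on balls $B_R$, integrate by parts, and show that the boundary and tail contributions vanish as $R\to\infty$. This is exactly where the regularity supplied by Theorems \ref{thm 1.1} and \ref{thm1.2} enters: the local $W^{2,q}$ bounds together with the $L^q$ integrability (and the decay it encodes) let me control the boundary integral of the Laplacian term, the nonlocal boundary contribution of $(-\Delta)^s$ (treated through the Ros-Oton--Serra fractional Pohozaev identity), and the convolution term, so that the limiting identity above is legitimate. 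Once the identity is secured, the remaining algebra is routine.
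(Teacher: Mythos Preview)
Your approach is essentially the same as the paper's: it also combines the Nehari identity with the Pohozaev identity (proved in the Appendix as Theorem~A, relying on the regularity furnished by Theorems~\ref{thm 1.1} and~\ref{thm1.2}) to obtain precisely your linear relation, and then derives the identical sign contradictions at $p=\frac{n+\alpha}{n}$ and $p=\frac{n+\alpha}{n-2}$. The only cosmetic difference is that the paper justifies the fractional part of the Pohozaev identity via the Anthal--Garain difference-quotient framework rather than the Ros-Oton--Serra approach you mention.
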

	
	\noindent The paper is organized as follows. Section 2 deals with the proof of regularity results (Theorems \ref{thm 1.1} and \ref{thm1.2}) whereas Section 3 is devoted to the existence of weak solutions and the equivalence result given in Theorems \ref{Theorem 1.3} and \ref{Theorem 1.4} respectively.  Concerning the regularity of solutions, we obtain that a weak solution lies in $L^q(\mathbb{R}^n)$ for all $q \in [2,(\frac{n}{\alpha})\frac{2n}{n-2})$ by following the ideas in \cite[Moroz-Schaftingen]{moroz2015existence}. Further, using the estimate given in Proposition 2.2 and Theorem 1.1, we obtain the Sobolev regularity given in Theorem \ref{thm1.2}. Next, we get the existence result by appealing an "asymptotic" Pohozaev type identity together with concentration compactness arguments applied to  a minimizing sequence and analysing the associated sequence of Lagrange multipliers $\{\Lambda^\lambda_k\}$ (see Lemma \ref{lemma3.2}). Furthermore, Theorem \ref{Theorem 1.4} is established by showing that a function is a ground-state solution if and only if it belongs to $G\cap \Gamma$. Finally the proof of nonexistence result (Theorem \ref{Theorem 1.5}) ends Section 3. \noindent The Pohozaev identity serves as a crucial instrument to get these existence and non existence results, it can obtained by combining the work of Moroz-Schaftingen \cite[proposition~3.1]{moroz2013groundstates} and Anthal-Garain \cite[Theorem~2.5]{Anthal2025Pohozaev} (where the authors use difference quotient approximations as in \cite{ambrosio} and \cite{jiaquanliu}). We have added the sketch of the proof in the appendix. We point out that the presence of the non local part of the mixed operator introduces technical difficulties and forces the use of accurate estimates along the proofs of our main results. \\
	\section{Regularity Results }
	In this section, we will prove regularity results for a solution to \eqref{prob}. For this, we first show that every solution of \eqref{prob} belongs to $L^q(\mathbb{R}^n)$ for all $q \in [2,\frac{n}{\alpha}\frac{2n}{n-2})$ using the following proposition.
	\begin{pro}
		If $M,N \in L^{\frac{2n}{\alpha}}(\mathbb{R}^n)+L^{\frac{2n}{\alpha+2}}(\mathbb{R}^n)$ and $u \in H^1(\mathbb{R}^n)$ solves
		\begin{equation}\label{reg_prob}
			-\Delta u +\lambda(-\Delta)^su+u = \mu(I_{\alpha}*Mu)N\;\;\text{ in }\mathbb{R}^n,
		\end{equation}
		then $u\in L^q(\mathbb{R}^n)$ for every $q\in[2,\frac{n}{\alpha}\frac{2n}{n-2})$.
	\end{pro}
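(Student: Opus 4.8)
The plan is to run a Moroz--Schaftingen type $L^q$-bootstrap on \eqref{reg_prob}, exploiting that the nonlocal part of $\mathcal{L}$ contributes a \emph{non-negative} term that may simply be discarded. For a bootstrap parameter $\theta\ge 1$ and a truncation level $T>0$, set $u_T:=\min\{|u|,T\}$ and test with the bounded function $v:=u\,|u_T|^{2(\theta-1)}\in H^1(\mathbb{R}^n)$. The truncation guarantees $v\in H^1(\mathbb{R}^n)$ and renders every integral below finite; the final estimates are then passed to the limit $T\to\infty$ via Fatou's lemma.

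First I would estimate the left-hand side from below. Since $g(t):=t\,|t_T|^{2(\theta-1)}$ is odd and non-decreasing on $\mathbb{R}$, the Gagliardo integrand satisfies $\bigl(u(x)-u(y)\bigr)\bigl(v(x)-v(y)\bigr)\ge 0$ pointwise, whence $\lambda\ll u,v\gg\,\ge 0$ and this term is dropped at once. A direct computation gives $\int\nabla u\cdot\nabla v\ge \int |u_T|^{2(\theta-1)}|\nabla u|^2\ge \theta^{-2}\int|\nabla w|^2$ with $w:=u\,|u_T|^{\theta-1}$, so the Sobolev inequality yields $\|w\|_{2^*}^2\lesssim \theta^2\int\nabla u\cdot\nabla v$, where $2^*=\tfrac{2n}{n-2}$. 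Together with $\int uv=\|w\|_2^2\ge 0$, the problem reduces to bounding the right-hand side $\mu\int (I_\alpha * Mu)\,Nv$ in terms of $\|w\|_{2^*}$.

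The core of the argument is to control this term by the Hardy--Littlewood--Sobolev inequality \eqref{co9} together with Hölder's inequality, after splitting $M=M_1+M_2$ and $N=N_1+N_2$ with $M_1,N_1\in L^{2n/\alpha}(\mathbb{R}^n)$ and $M_2,N_2\in L^{2n/(\alpha+2)}(\mathbb{R}^n)$. Applying \eqref{co9} with the symmetric exponent $\tfrac{2n}{n+\alpha}$ produces the factors $\|Mu\|_{2n/(n+\alpha)}$ and $\|Nv\|_{2n/(n+\alpha)}$, and Hölder then distributes these onto $\|M_i\|$, $\|N_i\|$ and appropriate $L^q$-norms of $u$ and of $v$ (which satisfies $|v|\le|u|^{2\theta-1}$). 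The terms carrying $M_1$ or $N_1$ reproduce a quantity at the level of $\|w\|_{2^*}$; choosing the truncation of $M$ and $N$ so that the $L^{2n/\alpha}$-parts $M_1,N_1$ have small norm allows these borderline terms to be \emph{absorbed} into the left-hand side, while the terms carrying $M_2$ or $N_2$ are bounded by norms of $u$ already known to be finite at the current stage.

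Finally I would iterate. Starting from $u\in L^2(\mathbb{R}^n)\cap L^{2^*}(\mathbb{R}^n)$ (which follows from $u\in H^1(\mathbb{R}^n)$) and letting $T\to\infty$, each step upgrades $\||u|^\theta\|_{2^*}$ and thus pushes $u$ into a higher $L^q$ with $q=\theta\cdot 2^*$; the admissible range of $\theta$ is dictated precisely by keeping the HLS exponents within the window forced by $M,N\in L^{2n/\alpha}(\mathbb{R}^n)+L^{2n/(\alpha+2)}(\mathbb{R}^n)$, and this is exactly what caps the reachable integrability at $q<\frac{n}{\alpha}\frac{2n}{n-2}$. I expect the main obstacle to be the exponent bookkeeping in the Hölder/HLS splitting across the sum space $L^{2n/\alpha}+L^{2n/(\alpha+2)}$, together with the absorption step that closes each iteration, rather than the nonlocal term itself, which is disposed of cleanly by the monotonicity observation above.
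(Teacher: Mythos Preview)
Your strategy is sound and runs parallel to the paper's, but the implementations diverge in two places worth noting. First, the paper does \emph{not} work directly with $u$: following Moroz--Schaftingen (2015) closely, it introduces truncated coefficients $M_k,N_k\in L^{2n/\alpha}(\mathbb{R}^n)$ and, via Lax--Milgram applied to a $\beta$-shifted bilinear form, constructs approximate solutions $u_k\rightharpoonup u$; the Moser test function $|u_{k,\gamma}|^{q-2}u_{k,\gamma}$ is then plugged into the equation for $u_k$, and the absorption is done through Lemma~3.2 of \cite{moroz2015existence} together with the observation that the residual integral over $\{|u_k|>\gamma\}$ vanishes as $\gamma\to\infty$. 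The payoff of this detour is that $M_k,N_k$ sit in a single Lebesgue space, so the H\"older bookkeeping in the residual term is clean. Your direct route avoids the $u_k$ machinery entirely, which is more economical, but forces you to run the H\"older/HLS splitting against the full sum space $L^{2n/\alpha}+L^{2n/(\alpha+2)}$; this is feasible (indeed your small-norm decomposition of $M,N$ is exactly the mechanism behind the cited Lemma~3.2), yet the four cross terms and the mismatch between $|u|$ and $|u_T|$ on $\{|u|>T\}$ require the same ``residual goes to zero'' argument that the paper isolates as \eqref{4.5}---you should expect to need it too. Second, for the fractional term the paper invokes an algebraic inequality (Lemma~3.5 of \cite{giacomoni2020regularity}) to bound $\ll u_k,|u_{k,\gamma}|^{q-2}u_{k,\gamma}\gg$ from below by $[\,|u_{k,\gamma}|^{q/2}\,]^2$, only to drop this nonnegative quantity later; your monotonicity observation ($g$ odd and nondecreasing $\Rightarrow \ll u,g(u)\gg\ge 0$) reaches the same conclusion in one line and is genuinely simpler. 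In short: both proofs are the Moroz--Schaftingen bootstrap, the paper trades an auxiliary approximation for tidier exponents, and your treatment of the nonlocal part is the more elegant of the two.
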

	\begin{proof}
		By Lemma 3.2 of \cite{moroz2015existence}, taking $\theta=1$, for $\epsilon=\frac{1}{\sqrt{8\mu}}$ there exists $\beta>0$ such that 
		\begin{eqnarray*}
			4\mu\int_{\mathbb{R}^n}(I_{\alpha}*|M||w|)|N||w| & \leq & \frac{\left\| \nabla w\right\|_2^2}{2}+\frac{\beta \left\| w \right\|_2^2}{2}
			\leq \frac{\left\| \nabla w \right\|_2^2}{2}+\frac{\beta \left\| w \right\|_2^2}{2}
			+\frac{\lambda}{2}[w]^2,
		\end{eqnarray*}
		for every $w\in H^1(\mathbb{R}^n)$. Motivated by the poineering work of V. Moroz and J. Van Schaftingen in \cite{moroz2015existence} (see in particular Proposition 3.1 there), taking $M=M_1+M_2$ and $N=N_1+N_2$ where $M_1$, $N_1\in L^{\frac{2n}{\alpha}}(\mathbb{R}^n)$ and $M_2$, $N_2\in L^{\frac{2n}{\alpha+2}}(\mathbb{R}^n)$, we define the sequences $\{M_k\}$ and $\{N_k\}$ as follows:
		$$M_k=M_{1,k}+ M_{2,k} \text{ and } N_k=N_{1,k}+N_{2,k},$$
		where $M_{1,k}=M_1$, $N_{1,k}=N_1$, $M_{2,k}=\chi_{\bar{B}(0,k)}\bar{M}_{2,k}$ and $N_{2,k}=\chi_{\bar{B}(0,k)}\bar{N}_{2,k}$ with $\chi_S$ being the characterisitic function on set $S$ and $\bar{M}_{2,k}$, $\bar{N}_{2,k}$ are the truncations of $M_2$ and $N_2$ respectively, defined as:
		$$\bar{M}_{2,k}=\max\{-k,\min\{k,M_2\}\} \text{ and } \bar{N}= \max\{-k,\min\{k,N_2\}\}.$$
		Then clearly
		$\{M_k\}_{k\in \mathbb{N}}$ and $\{N_k\}_{k\in \mathbb{N}}$ belong to $L^{\frac{2n}{\alpha}}(\mathbb{R}^n)$ and satisfy 
		$|M_k|\leq  2|M|,\; \{M_k(x)\}\rightarrow M(x) $ almost everywhere,
		$|N_k|\leq 2|N|$ and $ \{N_k(x)\}\rightarrow N(x) $ almost everywhere  in $\mathbb{R}^n$.
		For each $k\in \mathbb{N}$, we define $a_k:H^1(\mathbb{R}^n)\times H^1(\mathbb{R}^n)\rightarrow \mathbb{R}$ as:
		\begin{equation*}
			a_k(w,v) :=  \int_{\mathbb{R}^n}\nabla w.\nabla vdx+\beta\int_{\mathbb{R}^n}w.vdx-\mu\int_{\mathbb{R}^n}(I_{\alpha}*M_kw)N_kvdx+\lambda\ll w,v\gg.
		\end{equation*}
		Since $M_k,N_k\in L^{\frac{2n}{\alpha}}(\mathbb{R}^n)\subset L^{\frac{2n}{\alpha}}(\mathbb{R}^n)+L^{\frac{2n}{\alpha+2}}(\mathbb{R}^n)$ for every $k\in \mathbb{N}$,
		\begin{eqnarray*}
			a_k(v,v)
			& \geq &\left\| \nabla v \right\|_2^2+\beta\left\| v \right\|_2^2 +\lambda[v]^2
			-\left(\frac{\left\| \nabla v \right\|_2^2}{2}+\frac{\beta\left\| v \right\|_2^2}{2}+\frac{\lambda }{2}[w]^2\right)\\
			&\geq & \text{C}\left(\left\| \nabla v \right\|_2^2+\left\| w \right\|_2^2+\lambda [w]^2\right),\text{ here C}=\min\{\frac{1}{2},\frac{\beta}{2}\}.
		\end{eqnarray*}
		Thus $a_k$ is a coercive bilinear form and hence applying Lax-Milgram theorem \cite[corollary~5.8]{haim2011functional} with the linear continuous form $f:H^1(\mathbb{R}^n)\rightarrow\mathbb{R}$ defined as:
		$$f(v)=\int_{\mathbb{R}^n}(\beta-1)uvdx,\;\;\forall v \in H^1(\mathbb{R}^n),$$
		there exists a unique $u_k\in H^1(\mathbb{R}^n)$ such that $a_k(u_k,v)=f(v)$ for every $v \in H^1(\mathbb{R}^n)$. This implies that $u_k$ is a solution of
		\begin{equation}\label{seq_prob}
			-\Delta u_k+\lambda(-\Delta)^su_k+\beta u_k= \mu(I_{\alpha}*M_ku_k)N_k+(\beta-1)u\;\;\text{ in }\mathbb{R}^n.
		\end{equation}
		Now, taking $\phi\in H^{1}(\mathbb{R}^n)$, since $N_{1,k}\phi= N_1\phi\in L^{\frac{2n}{n+\alpha}}(\mathbb{R}^n)$ for all $k\in \mathbb{N}$, we get $(I_\alpha*(N_{1,k}\phi))\rightarrow (I_\alpha*(N_{1}\phi))$ in $L^{\frac{2n}{n-\alpha}}(\mathbb{R}^n)$, by \cite[A-2]{moroz2017guide}. Similarly, since, by Dominated convergence theorem, $\{N_{2,k}\phi\}\rightarrow N_2\phi$ in $L^{\frac{2n}{n+\alpha}}(\mathbb{R}^n)$, we get $(I_\alpha*(N_{2,k}\phi))\rightarrow (I_\alpha*(N_{2}\phi))$ in $L^{\frac{2n}{n-\alpha}}(\mathbb{R}^n)$. Thus
		$$I_{\alpha}*N_k\phi \rightarrow I_{\alpha}*N\phi \text{ in } L^{\frac{2n}{n-\alpha}}(\mathbb{R}^n).$$
		Now,  from the uniform coercivity of $a_k$, taking $u_k $ as a test function in \eqref{seq_prob} and using Cauchy-Schwarz inequality, we obtain the boundedness of $\{u_k\}$ in $H^1(\mathbb{R}^n)$, hence $\{u_k\}$ converges weakly to some $w\in H^1(\mathbb{R}^n)$ up to a subsequence. However, one can easily check that $\{M_ku_kg\}\rightarrow Mwg$ for every $g\in L^{\frac{2n}{n-\alpha}}(\mathbb{R}^n)$ and hence:
		$$\int_{\mathbb{R}^n}(I_{\alpha}*M_ku_k)N_k\phi= \int_{\mathbb{R}^n}(I_{\alpha}*N_k\phi)M_ku_k\rightarrow \int_{\mathbb{R}^n}(I_{\alpha}*Mw)N\phi.$$
		This indicates that $w$ satisfies $-\Delta w+\lambda(-\Delta)^sw+\beta w= \mu(I_{\alpha}*Mw)N+(\beta-1)u \text{ in }H^{-1}(\mathbb{R}^n)$ and since $u$ is its unique solution of \eqref{reg_prob} (by the coercivity of the associated bilinear form), $w=u$ (again by coercivity of the bilinear form) and the whole sequence $\{u_k\}\rightharpoonup u$ weakly in $H^1({\mathbb{R}^n})$.
	Now let's define the truncation $u_{k,\gamma}$ corresponding to $\gamma>0$ as follows,
	\begin{equation}
		u_{k,\gamma}(x) = 
		\left\{
		\begin{array}{ll}
			-\gamma & \text{if } u_k(x)\leq -\gamma,\\
			u_k(x) &  \text{if }-\gamma<u_k(x)<\gamma,\\
			\gamma & \text{if } u_k(x)\geq \gamma.
		\end{array}
		\right.
	\end{equation}
	Taking $v(x)=|u_{k,\gamma}(x)|^{q-2}u_{k,\gamma}(x)\in H^1(\mathbb{R}^n)$ as a test function in \eqref{seq_prob}, we get
	\begin{eqnarray*}
		&&(q-1)\int_{\mathbb{R}^n}|u_{k,\gamma}|^{q-2}\nabla u_k.\nabla u_{k,\gamma}dx+\beta\int_{\mathbb{R}^n}|u_{k,\gamma}|^{q-2}u_k(x).u_{k,\gamma}(x)dx \\
		&&+\frac{\lambda C(n,s)}{2}\int_{\mathbb{R}^n}\int_{\mathbb{R}^n}\frac{(u_k(x)-u_k(y))(|u_{k,\gamma}(x)|^{q-2}u_{k,\gamma}(x)-|u_{k,\gamma}(y)|^{q-2}u_{k,\gamma}(y))}{|x-y|^{n+2s}}dxdy\\
		&&=  \mu\int_{\mathbb{R}^n}(I_{\alpha}*M_ku_k)N_k|u_{k,\gamma}|^{q-2}u_{k,\gamma}dx+(\beta-1)\int_{\mathbb{R}^n}u(x)|u_{k,\gamma}(x)|^{q-2}u_{k,\gamma}(x)dx.
	\end{eqnarray*}
	By using lemma 3.5 of \cite{giacomoni2020regularity}, we can deduce that
	\begin{eqnarray*}
		\frac{\lambda C(n,s)}{2}\int_{\mathbb{R}^n}\int_{\mathbb{R}^n}\frac{(u_k(x)-u_k(y))(|u_{k,\gamma}(x)|^{q-2}u_{k,\gamma}(x)-|u_{k,\gamma}(y)|^{q-2}u_{k,\gamma}(y))}{|x-y|^{n+2s}}dxdy&&\\
		\geq \frac{4(q-1)\lambda C(n,s)}{2q^2}\int_{\mathbb{R}^n}\int_{\mathbb{R}^n}\frac{||u_{k,\gamma}(x)|^{\frac{q}{2}}-|u_{k,\gamma}(y)|^{\frac{q}{2}}|^2}{|x-y|^{n+2s}}dxdy,&&
	\end{eqnarray*}
	also, by construction of $u_{k,\gamma}$ we have $|u_{k,\gamma}|^q\leq |u_{k,\gamma}|^{q-2}u_{k,\gamma}u_k$, and hence 
	\begin{eqnarray*}
		&&     	(q-1)\int_{\mathbb{R}^n}|u_{k,\gamma}|^{q-2}|\nabla u_{k,\gamma}|^2 dx+\beta\int_{\mathbb{R}^n}|u_{k,\gamma}|^{q-2}u_{k,\gamma}u_kdx\\
		&& +\frac{\lambda C(n,s)}{2}\int_{\mathbb{R}^n}\int_{\mathbb{R}^n}\frac{(u_k(x)-u_k(y))(|u_{k,\gamma}(x)|^{q-2}u_{k,\gamma}(x)-|u_{k,\gamma}(y)|^{q-2}u_{k,\gamma}(y))}{|x-y|^{n+2s}}dxdy\\
		&& \geq \frac{4(q-1)}{q^2}\int_{\mathbb{R}^n}|\nabla (u_{k,\gamma})^{\frac{q}{2}}|^2dx+\beta\int_{\mathbb{R}^n}||u_{k,\gamma}|^{\frac{q}{2}}|^2dx+\frac{4(q-1)\lambda }{q^2}[|u_{k,\gamma}|^{\frac{q}{2}}]^2.
	\end{eqnarray*}
	Therefore,
	\begin{eqnarray*}
		&&\frac{4(q-1)}{q^2}\int_{\mathbb{R}^n}|\nabla |u_{k,\gamma}|^{\frac{q}{2}}|^2dx
		+\frac{4(q-1)\lambda }{q^2}[|u_{k,\gamma}|^{\frac{q}{2}}]^2
		+\beta\int_{\mathbb{R}^n}||u_{k,\gamma}|^{\frac{q}{2}}|^2dx\\
		& \leq &  (q-1)\int_{\mathbb{R}^n}|u_{k,\gamma}|^{q-2}\nabla u_{k,\gamma}.\nabla u_k dx+\beta\int_{\mathbb{R}^n}|u_{k,\gamma}|^{q-2}u_{k,\gamma}u_kdx\\
		&& +\frac{\lambda C(n,s)}{2}\int_{\mathbb{R}^n}\int_{\mathbb{R}^n}\frac{(u_k(x)-u_k(y))(|u_{k,\gamma}(x)|^{q-2}u_{k,\gamma}(x)-|u_{k,\gamma}(y)|^{q-2}u_{k,\gamma}(y))}{|x-y|^{n+2s}}dxdy\\
		& = &\mu\int_{\mathbb{R}^n}(I_{\alpha}*M_ku_k)N_k|u_{k,\gamma}|^{q-2}u_{k,\gamma}dx+(\beta-1)\int_{\mathbb{R}^n}|u_{k,\gamma}|^{q-2}u_{k,\gamma}udx.
	\end{eqnarray*}
	For $q<\frac{2n}{\alpha}$, taking $\theta=\frac{2}{q}$ and $u=|u_{k,\gamma}|^{\frac{q}{2}}$ in Lemma 3.2 of \cite{moroz2015existence}, we get
	\begin{eqnarray*}
		&&\mu\int_{\mathbb{R}^n}(I_{\alpha}*M_ku_{k,\gamma})N_k|u_{k,\gamma}|^{q-2}u_{k,\gamma}\\ 
		& &\leq  \mu\int_{\mathbb{R}^n}(I_{\alpha}*M_k|u_{k,\gamma}|)N_k|u_{k,\gamma}|^{q-2}|u_{k,\gamma}|\\
		&&\leq  \frac{2(q-1)}{q^2}\int_{\mathbb{R}^n}|\nabla(u_{k,\gamma})^{\frac{q}{2}}|^2dx+C''\int_{\mathbb{R}^n}||u_{k,\gamma}|^{\frac{q}{2}}|^2dx  \\
		&&\leq  \frac{2(q-1)}{q^2}\int_{\mathbb{R}^n}|\nabla(u_{k,\gamma})^{\frac{q}{2}}|^2dx+C''\int_{\mathbb{R}^n}||u_{k,\gamma}|^{\frac{q}{2}}|^2dx
		+\frac{2(q-1)}{q^2}\lambda [|u_{k,\gamma}|^{\frac{q}{2}}|]^2.
	\end{eqnarray*}
	Now, taking $A_{k,\gamma}=\{x\in \mathbb{R}^n: |u_k(x)|>\gamma\}$, we get
	\begin{eqnarray*}
		&&   	\frac{4(q-1)}{q^2}\left(\int_{\mathbb{R}^n}|\nabla |u_{k,\gamma}|^{\frac{q}{2}}|^2dx
		+\lambda [|u_{k,\gamma}|^{\frac{q}{2}}]^2\right)
		+\beta\int_{\mathbb{R}^n}||u_{k,\gamma}|^{\frac{q}{2}}|^{2}dx	-C''\int_{\mathbb{R}^n}||u_{k,\gamma}|^{\frac{q}{2}}|^2dx\\
		&&-\frac{2(q-1)}{q^2}\left(\int_{\mathbb{R}^n}|\nabla|u_{k,\gamma}|^{\frac{q}{2}}|^2dx+\lambda[|u_{k,\gamma}|^{\frac{q}{2}}]^2\right)\\
		&&	\leq 2\mu\int_{A_{k,\gamma}}(I_{\alpha}*|N_k||u_k|^{q-1})|M_ku_k|+(\beta-1)\int_{\mathbb{R}^n}|u_{k,\gamma}|^{q-2}u_{k,\gamma}udx,
	\end{eqnarray*}
	that is,
	\begin{eqnarray}\label{4.4}
		&&\frac{2(q-1)}{q^2}\left(\int_{\mathbb{R}^n}|\nabla|u_{k,\gamma}|^{\frac{q}{2}}|^2dx+\lambda[|u_{k,\gamma}|^{\frac{q}{2}}]^2\right)\nonumber\\
		&&\leq     	2\mu\int_{A_{k,\gamma}}(I_{\alpha}*|N_k||u_k|^{q-1})|M_ku_k|+C_1\left[\int_{\mathbb{R}^n}|u_{k,\gamma}|^{q-2}u_{k,\gamma}u+\int_{\mathbb{R}^n}|u_{k,\gamma}|^qdx\right]\nonumber\\
		&&\leq C_1\left[\int_{\mathbb{R}^n}|u_k|^q+|u|^q\right]+2\mu\int_{A_{k,\gamma}}(I_{\alpha}*|N_k||u_k|^{q-1})|M_ku_k|.
	\end{eqnarray}
	Now, by Hardy-Littlewood-Sobolev inequality,
	\begin{eqnarray*}
		\mu\int_{A_{k,\gamma}}(I_{\alpha}*|N_k||u_k|^{q-1})|M_ku_k|& \leq & C\left(\int_{\mathbb{R}^n}||N_k||u_k|^{q-1}|^r\right)^{\frac{1}{r}}\left(\int_{A_{k,\gamma}}|M_ku_k|^s\right)^{\frac{1}{s}},
	\end{eqnarray*}
	here $r=\frac{2nq}{\alpha q+2nq-2n}$ and $t =\frac{2nq}{\alpha q+2n}$. If $u_k\in L^q(\mathbb{R}^n)$, then using H$\ddot{\text{o}}$lder's inequality we can see that $|N_k||u_k|^{q-1}\in L^{r}(\mathbb{R}^n)$ and $|M_ku_k|\in L^t(\mathbb{R}^n)$ and hence 
	\begin{equation}\label{4.5}
		\mu\int_{A_{k,\gamma}}(I_{\alpha}*|N_k||u_k|^{q-1})|M_ku_k|\rightarrow 0 \text{ as } \gamma\rightarrow \infty.
	\end{equation}
	Taking $\gamma\rightarrow \infty$ in \eqref{4.4} and using \eqref{4.5}, we get
	\begin{eqnarray*}
		\frac{2(q-1)}{q^2}\left(\int_{\mathbb{R}^n}|\nabla|u_k|^{\frac{q}{2}}|^2+\lambda[|u_k|^{\frac{q}{2}}]^2\right)
		&\leq & C_1\left(\int_{\mathbb{R}^n}|u_k|^q+|u|^q\right).
	\end{eqnarray*}
	This implies that $|u_k|^{\frac{q}{2}}\in H^1(\mathbb{R}^n)\hookrightarrow L^{\frac{2n}{n-2}}(\mathbb{R}^n)$ and hence by Fatou's lemma
	\begin{eqnarray*}
		\left(\int_{\mathbb{R}^n}||u_k|^{\frac{q}{2}}|^{\frac{2n}{n-2}}\right)^{\frac{2(n-2)}{2n}}& \leq & K_1\left\||u_k|^{\frac{q}{2}} \right\|^2 \leq K'_1\left(\int_{\mathbb{R}^n}|u_k|^q+|u|^q\right)\\
		& \leq &K'_1\left(\int_{\mathbb{R}^n}|u_k|^q+\limsup_{k\rightarrow \infty}\int_{\mathbb{R}^n}|u_k|^q\right).
	\end{eqnarray*}
	Thus, again by Fatou's lemma
	\begin{equation}\label{higherIntegrability}
		\left(\int_{\mathbb{R}^n}|u|^{\frac{nq}{n-2}}\right)^{\frac{n-2}{n}}\leq \limsup_{k\rightarrow \infty}\left(\int_{\mathbb{R}^n}|u_k|^{\frac{nq}{n-2}}\right)^{1-\frac{2}{n}}\leq K''\limsup_{k\rightarrow \infty}\int_{\mathbb{R}^n}|u_k|^q.
	\end{equation}
	Starting with $q=2$, using \eqref{higherIntegrability} we get the required result as follows:\\
	Case 1: $n-\alpha\leq2$.\\
	For $n-\alpha\leq 2$, we have $\frac{2n}{\alpha}\leq 2^*$ and hence since $u,u_k\in L^m(\mathbb{R}^n)$ for all $m\in [2,\frac{2n}{\alpha})$ and $k\in \mathbb{N}$, by \eqref{higherIntegrability} we get $u\in L^m(\mathbb{R}^n)$ for all $m\in [2,\frac{n}{\alpha}2^*)$.\\
	Case 2: $n-\alpha>2$\\
	In this case, we have $\frac{2n}{\alpha}>2^*$. Now taking $0<\epsilon\leq 2^*-2$, we get $u$, $u_k\in L^m(\mathbb{R}^n)$ for all $m\in [2,2+\epsilon)\subset [2,2^*]$ and $k\in \mathbb{N}$, hence by \eqref{higherIntegrability}   
	\begin{equation}\label{case2}
		u, u_k\in L^m(\mathbb{R}^n) \text{ for all } m\in \left[2,(2+\epsilon)\frac{n}{n-2}\right) \text{ and } k\in \mathbb{N}.
	\end{equation}
	If $(2+\epsilon)\frac{n}{n-2}\geq \frac{2n}{\alpha}$ then as done in case 1, we get the required result, thus we stop the iteration, and for the subcase $(2+\epsilon)\frac{n}{n-2} < \frac{2n}{\alpha}$, using \eqref{higherIntegrability} and \eqref{case2} we will get 
	\begin{equation*}
		u, u_k\in L^m(\mathbb{R}^n) \text{ for all } m\in \left[2,(2+\epsilon)\left(\frac{n}{n-2}\right)^2\right) \text{ and } k\in \mathbb{N}.
	\end{equation*}
	Again if $(2+\epsilon)\left(\frac{n}{n-2}\right)^2\geq \frac{2n}{\alpha}$, the iteration terminates, otherwise we use the above higher integrability of $u$, $u_k$ and \eqref{higherIntegrability}  to increase the range of $m$. Now since the map $t\mapsto (2+\epsilon)\left(\frac{n}{n-2}\right)^t$ increases to infinity, 
	the procedure must stop after a finite number of iterations. Hence $u\in L^m(\mathbb{R}^n)$ for all $m\in [2,\frac{n}{\alpha}2^*)$.
\end{proof}
\begin{myproof}{Theorem}{\ref{thm 1.1}}
	
	Let $u$ be a solution of \eqref{prob}. Since $\frac{n+\alpha}{n}\leq p\leq \frac{n+\alpha}{n-2}$, we get for any $x\in\mathbb{R}^n$
	$$|u(x)|^{p-1}\leq \left(|u(x)|^{\frac{\alpha}{n}}+|u(x)|^{\frac{\alpha+2}{n-2}}\right).$$
	Defining $N,M$ by $N(x)=M(x)=|u(x)|^{p-2}u(x)$ for any $x\in\mathbb{R}^n$, clearly $N,M\in L^{\frac{2n}{\alpha}}(\mathbb{R}^n)+L^{\frac{2n}{\alpha+2}}(\mathbb{R}^n)$. Therefore, by proposition 2.1, $u\in L^q(\mathbb{R}^n)$ for $q\in [2,\frac{n}{\alpha}\frac{2n}{n-2})$.
\end{myproof}
\begin{myproof}{Theorem}{\ref{thm1.2}}
	We know that, for all $x\in \mathbb{R}^n$ 
	\begin{equation*}
		(I_{\alpha}*|u|^p)(x) = \int_{\mathbb{R}^N}\frac{A_{\alpha}|u(x-y)|^p}{|y|^{n-\alpha}}dy = \int_{B(0,1)}\frac{A_{\alpha}|u(x-y)|^p}{|y|^{n-\alpha}}dy+\int_{\mathbb{R}^n\setminus B(0,1)}\frac{A_{\alpha}|u(x-y)|^p}{|y|^{n-\alpha}}dy.
	\end{equation*}
	For some $1<\frac{n}{\alpha}<\gamma<\frac{2n}{\alpha p}(\frac{n}{n-2})$, using \autoref{thm 1.1} and the H$\ddot{\text{o}}$lder's inequality, we get
	\begin{equation*}
		\int_{B(0,1)}\frac{A_{\alpha}|u(x-y)|^p}{|y|^{n-\alpha}}dy  \leq  C_\alpha \left(\int_{B(0,1)}\frac{dy}{|y|^{\frac{(n-\alpha)\gamma}{\gamma-1}}}\right)^{\frac{\gamma-1}{\gamma}}<M_1,
	\end{equation*}
	also, we have:
	\begin{eqnarray*}
		\int_{\mathbb{R}^n\setminus B(0,1)}\frac{A_{\alpha}|u(x-y)|^p}{|y|^{n-\alpha}}dy  \leq  A_{\alpha} \left(\int_{\mathbb{R}^n\setminus B(0,1)}|u(x-y)|^{\frac{2np}{n+\alpha}}\right)^{\frac{n+\alpha}{2n}}\left(\int_{\mathbb{R}^n\setminus B(0,1)}\frac{dy}{|y|^{(n-\alpha)\frac{2n}{n-\alpha}}}\right)^{\frac{n-\alpha}{2n}}.
	\end{eqnarray*}
	Therefore, there exists $M>0$ such that
	$(I_{\alpha}*|u|^p)(x)<M$,
	for all $\frac{n+\alpha}{n}\leq p \leq \frac{n+\alpha}{n-2}$. Thus,
	\begin{equation}\label{I_alpha_L_infinity}
		(I_{\alpha}*|u|^p)\in L^{\infty}(\mathbb{R}^n) \text{ for all } \frac{n+\alpha}{n}\leq p \leq \frac{n+\alpha}{n-2}.
	\end{equation}	
	For $0<\epsilon <1$, defining $h_{\epsilon}\,:\, t\to h_{\epsilon}(t):=\sqrt{\epsilon^2+t^2}$, clearly, $h_{\epsilon}$ is a convex function. Now, taking $g_{\epsilon}:=h_{\epsilon}'$, by \cite[Theorem 2.2.3]{Kesavan2019Topics} we get $g_{\epsilon}(u)\in H^1(\mathbb{R}^n)$, since $g_{\epsilon}$ is continuously differentiable, $g_{\epsilon}(0)=0$ and $|g'_{\epsilon}(t)|\leq \frac{1}{\epsilon}:=M_{\epsilon}$.
	Consider, $\zeta= \phi g_{\epsilon}(u)\in H^1(\mathbb{R}^n)$ for some $0\leq\phi \in C_c^{\infty}(\mathbb{R}^n)$ chosen arbitrarily, then from convexity of $h_{\epsilon}$ we have $\phi(x)h'_{\epsilon}(u(x))(u(x)-u(y))\geq \phi(x)(h_{\epsilon}(u(x))-h_{\epsilon}(u(y)))$ (see \cite[Lemma~A.1]{Brasco2016second}) and then we deduce easily by interchanging the role of $x$ and $y$:
	\begin{equation}\label{eq1.2.1}
		\ll u, \zeta\gg \geq \frac{C(n,s)}{2}\int_{\mathbb{R}^n}\int_{\mathbb{R}^n}\frac{(h_{\epsilon}(u(x))-h_{\epsilon}(u(y)))(\phi(x)-\phi(y))}{|x-y|^{n+2s}}dxdy = \ll h_{\epsilon}(u), \phi\gg.
	\end{equation}
	Thus, taking $\zeta$ as a test function in \eqref{prob}, we have: 
	\begin{eqnarray*}
		&& \int_{\mathbb{R}^n}|\nabla u|^2\phi g_{\epsilon}'(u)+\int_{\mathbb{R}^n}g_{\epsilon}(u)\nabla u \nabla \phi +\lambda \ll h_{\epsilon}(u), \phi \gg +\int_{\mathbb{R}^n}u\zeta \\
		&& \leq  \int_{\mathbb{R}^n}\nabla u \nabla \zeta +\lambda\ll u,\zeta \gg +\int_{\mathbb{R}^n}u\zeta = \mu \int_{\mathbb{R}^n}(I_{\alpha}*|u|^p)|u|^{p-2}u\zeta\\
		&& \leq  \mu \int_{\mathbb{R}^n}(I_{\alpha}*|u|^p)|u|^{p-1}\phi.
	\end{eqnarray*}
	Taking $\epsilon \rightarrow 0$ in the above expression together with dominated convergence theorem, we get for any nonnegative $\phi\in C^\infty_c(\mathbb{R}^n)$:
	\begin{equation}\label{eq1.2.2}
		\int_{\mathbb{R}^n}\nabla |u| \nabla \phi+\lambda \ll |u|, \phi\gg+\int_{\mathbb{R}^n}|u|\phi \leq \mu \int_{\mathbb{R}^n}(I_{\alpha}*|u|^p)|u|^{p-1}\phi.
	\end{equation}
	By density, the above inequality holds for all nonnegative $\phi \in H^1(\mathbb{R}^n)$. Now, for a fixed $\gamma>0$, define $u_{\gamma}:=\min\{\gamma, |u|\}$ and, for some $k>1$, set $\beta =2k-1$. Replacing $\phi$ by $u_{\gamma}^{\beta}\in H^1(\mathbb{R}^n)$ in \eqref{eq1.2.2} and using \cite[Lemma 3.1]{Biswas2023Regularity}, we get
	\begin{eqnarray*}
		&&\frac{4\beta}{(\beta+1)^2}\left(\left\| \nabla u_{\gamma}^k\right\|_2^2+\lambda [u_{\gamma}^k]^2\right)+\left\| u_{\gamma}^k\right\|_2^2 \\
		&& \leq  {\beta}\int_{\{|u|<\gamma\}}|\nabla |u||^2 u_{\gamma}^{\beta-1}+\lambda\ll |u|,u_{\gamma}^{\beta}\gg +\int_{\mathbb{R}^n}|u|u_{\gamma}^{\beta}\\
		&& = \int_{\mathbb{R}^n}\nabla |u| \nabla u_{\gamma}^{\beta}+\lambda\ll |u|, u_{\gamma}^{\beta}\gg +\int_{\mathbb{R}^n}|u| u_{\gamma}^{\beta} \leq \mu \int_{\mathbb{R}^n}(I_{\alpha}*|u|^p)|u|^{p-1}u_{\gamma}^{\beta}.
	\end{eqnarray*}
	Therefore,
	$$\left\| u_{\gamma}^k\right\|_{H^1(\mathbb{R}^n)}^2\leq \frac{k^2}{\beta}\mu \int_{\mathbb{R}^n}(I_{\alpha}*|u|^p)|u|^{p-1}u_{\gamma}^{\beta}.$$
	Now, by the continuous imbedding $H^1(\mathbb{R}^n)\hookrightarrow L^{2^*}(\mathbb{R}^n)$, \autoref{prop1.1}, H$\ddot{\text{o}}$lder's inequality, the fact that $k>1$ and $(a+b)^c<a^c+b^c$ for all $a,b>0$, $c<1$, for some $\delta>1$, we have:
	\begin{eqnarray*}
		\left\| u_{\gamma}^k \right\|_{2^*}^2 & \leq & C_1 \left\| u_{\gamma}^{k} \right\|_{H^1(\mathbb{R}^n)}^2 \leq  C_1k\mu\int_{\mathbb{R}^n}(I_{\alpha}*|u|^p)|u|^{p-1}u_{\gamma}^{\beta}\\
		& \leq & C_1k\mu C_n\left(\int_{\mathbb{R}^n}|u|^{\frac{2np}{n+\alpha}}\right)^{\frac{n+\alpha}{2n}}\left(\int_{\mathbb{R}^n}(|u|^{p-2}|uu_{\gamma}^{\beta}|)^{\frac{2n}{n+\alpha}}\right)^{\frac{n+\alpha}{2n}}\\
		& \leq & C_1k\mu \bar{C_n}\left(\int_{\{|u|<\delta\}}(|u|^{p-2}|uu_{\gamma}^{\beta}|)^{\frac{2n}{n+\alpha}}+\int_{\{|u|\geq\delta\}}(|u|^{p-2}|uu_{\gamma}^{\beta}|)^{\frac{2n}{n+\alpha}}\right)^{\frac{n+\alpha}{2n}}\\
		& \leq & C_1k\mu \bar{C_n}\left(\left(\int_{\{|u|<\delta\}}(|u|^{p-2}|uu_{\gamma}^{\beta}|)^{\frac{2n}{n+\alpha}}\right)^{\frac{n+\alpha}{2n}}+\left(\int_{\{|u|\geq\delta\}}(|u|^{p-2}|uu_{\gamma}^{\beta}|)^{\frac{2n}{n+\alpha}}\right)^{\frac{n+\alpha}{2n}}\right)\\
		& \leq & C_1k\mu \bar{C_n}\delta^{p-2}\left\| u \right\|_{L^{\frac{4nk}{n+\alpha}}(\mathbb{R}^n)}^{2k}+C_1k\mu\bar{C_n}\left(\int_{\{|u|\geq\delta\}}|u|^{(\frac{\alpha+4-n}{n-2})(\frac{2n}{n+\alpha})}|u|^{\frac{4nk}{n+\alpha}}\right)^{\frac{n+\alpha}{2n}}\\
		& \leq & C_1k\mu \bar{C_n}\delta^{p-2}\left\| u \right\|_{L^{\frac{4nk}{n+\alpha}}(\mathbb{R}^n)}^{2k}+C_1k\mu\bar{C_n}\left(\int_{\{|u|\geq\delta\}}|u|^{2^*}\right)^{\frac{\alpha+4-n}{2n}}\left(\int_{\{|u|\geq\delta\}}|u|^{2^*k}\right)^{\frac{n-2}{n}}\\
		& \leq & C_1k\mu \bar{C_n}\delta^{p-2}\left\| u \right\|_{L^{\frac{4nk}{n+\alpha}}(\mathbb{R}^n)}^{2k}+C_1k\mu\bar{C_n}C(\delta)^{\frac{\alpha+4-n}{n-2}}\left\| u \right\|_{L^{2^*k}(\mathbb{R}^n)}^{2k}.
	\end{eqnarray*}
	Taking $\delta=\delta_k>1$ large enough such that $C_2:=C_1k\mu\bar{C_n}C(\delta_k)^{\frac{\alpha+4-n}{n-2}}<1$ together with Fatou's lemma, we obtain:
	$$\left\| u \right\|_{L^{{2^*k}}(\mathbb{R}^n)}^{2k}=\left\| u^k \right\|_{2^*}^2\leq \liminf_{\gamma\rightarrow \infty} \left\| u_{\gamma}^k\right\|_{2^*}^2\leq C_1k\mu \bar{C_n}\delta_k^{p-2}\left\| u \right\|_{L^{\frac{4nk}{n+\alpha}}(\mathbb{R}^n)}^{2k}+C_2\left\| u \right\|_{L^{2^*k}(\mathbb{R}^n)}^{2k}$$
	and
	\begin{equation}\label{eq1.2.3}
		\left\| u \right\|_{2^*k} \leq (\hat{C_k})^{\frac{1}{2k}}(k)^{\frac{1}{2k}}\left\| u \right\|_{\frac{4nk}{n+\alpha}},
	\end{equation}
	where $\hat{C_k}=\frac{\mu C_1\bar{C_n}\delta_k^{p-2}}{1-C_2}$. Now, since $u\in L^{\frac{4nk}{n+\alpha}}(\mathbb{R}^n)$ for all $k\in \left(1, (\frac{n+\alpha}{4\alpha})2^*\right)$, then by \eqref{eq1.2.3}, $$u\in L^q(\mathbb{R}^n) \text{ for all } q\in \left(2^*, \gamma_1\right),$$
	where $\gamma_1=\left(\frac{n+\alpha}{4\alpha}\right)(2^*)^2=\frac{n2^*}{\alpha}\left(\frac{2^*(n+\alpha)}{4n}\right)$ and since $2\leq 2^*<\frac{n}{\alpha}2^*$, we get $u\in L^q(\mathbb{R}^n)$ for all $q\in [2,\gamma_1)$. Now, again using \eqref{eq1.2.3} in a similar way, we get $u\in L^{q}(\mathbb{R}^n)$ for all $q\in [2,\gamma_2)$ where $\gamma_2=\frac{n2^*}{\alpha}\left(\frac{2^*(n+\alpha)}{4n}\right)^2$ and so on by iteration. Now, since $\frac{2^*(n+\alpha)}{4n}>1$ for $n-\alpha<4$, we have  $\gamma_r=\frac{n2^*}{\alpha}\left(\frac{2^*(n+\alpha)}{4n}\right)^r\rightarrow\infty$ as $r\rightarrow \infty$. Therefore, 
	\begin{equation}\label{eq1.2.4}
		u\in L^q(\mathbb{R}^n) \text{ for all } q\in [2,\infty).
	\end{equation}
	Defining $g:=\mu(I_{\alpha}*|u|^{p})|u|^{p-2}u-u$, one has $g\in L^q(\mathbb{R}^n)$ for all $q\in [2,\infty)\subset [\frac{2}{p-1},\infty)$ and 
	\begin{eqnarray*}
		Tail_{1,s,2}(u,x_0,R) & = & R^2\int_{\mathbb{R}^n\setminus B_R(x_0)}\frac{|u|}{|x-x_0|^{N+s}}dx\\
		& \leq & R^2\left(\int_{\mathbb{R}^n\setminus B_R(x_0)}|u|^2dx\right)^{\frac{1}{2}}\left(\int_{\mathbb{R}^n\setminus B_R(x_0)}\frac{1}{|x-x_0|^{2(N+s)}}dx\right)^{\frac{1}{2}}<+\infty.
	\end{eqnarray*}
	Thus, by \cite[Theorem 1.2]{Garain2023Higher}, $u\in L^{\infty}_{loc}(\mathbb{R}^n)$ (by \cite[Theorem, 1.4]{Garain2023Higher}, it is even $u\in C^{\delta}_{loc}(\mathbb{R}^n)$ for every $0<\delta<\min\{2s,1\}$).
	Now, we reformulate our problem: For fixed $B>0$, let us define the following operators:
	$$\mathcal{L}_1(u):=C(n,s)\text{P.V.}\int_{|x-y|\leq B}\frac{u(x)-u(y)}{|x-y|^{n+2s}}dy \text{ and }\mathcal{L}_2(u):=C(n,s)\text{P.V.}\int_{|x-y|> B}\frac{u(x)-u(y)}{|x-y|^{n+2s}}dy.$$
	Thus we can write \eqref{prob} as
	$$-\Delta u+\lambda \mathcal{L}_1(u)+u=\mu (I_{\alpha}*|u|^p)|u|^{p-2}u-\mathcal{L}_2(u) \text{ in } \mathbb{R}^n.$$
	Defining $f:=\mu(I_{\alpha}*|u|^p)|u|^{p-2}u-\mathcal{L}_2(u)$, using $u\in L^{\infty}_{loc}(\mathbb{R}^n)$ and $L^q(\mathbb{R}^n)$ for all $q\in [2,\infty)$ and applying Fubini's theorem together with:
	\begin{eqnarray*}
		&\left(\int_{\mathbb{R}^n}\left|\int_{|x-y|> B}\frac{u(x)-u(y)}{|x-y|^{n+2s}}dy\right|^qdx\right)dx\leq C(q)\int_{\mathbb{R}^n}|u|^q\left(\int_{|x-y|> B}\frac{1}{|x-y|^{(n+2s)q}}dy\right)dx<\\
		&2C(B,q)\int_{\mathbb{R}^n}|u|^q<+\infty
	\end{eqnarray*}
	and 
	$$\int_{\mathbb{R}^n}|(I_{\alpha}*|u|^p)|u|^{p-1}|^q\leq M^q\int_{\mathbb{R}^n}|u|^{q(p-1)}<+\infty,$$
	we get $f\in L^q(\mathbb{R}^n)$ for all $q\geq 2$. Hence, using \cite[Theorem 3.1.20]{garroni2002second} with $\Omega_{I}$ being the ball of radius $B$ centered at $0$ and a fixed bounded domain $\Omega$, we get $u\in W^{2,q}_{loc}(\mathbb{R}^n)$ for all $q\geq 2$.
\end{myproof}
\section{Existence and Equivalence Results}
\noindent	This section is devoted to the existence and equivalence results for the solution of \eqref{prob}. In that context, we start with the following observations:\\
Let $u\in  H^1(\mathbb{R}^n)$ be a solution of 
\begin{equation}\label{1.1}
\mathcal{L}u+u=\mu (I_\alpha*|u|^p)|u|^{p-2}u\;\;\;\text{in } \mathbb{R}^n.
\end{equation}
Then, u is a critical point of $S_\lambda$,
this gives 
\begin{equation}\label{NehariI}
\left\| \nabla u \right\|_2^2+ \left\| u \right\|_2^2+\lambda[u]^2=\mu \mathcal{A}_p(u),
\end{equation}
and 
by \autoref{Pohozaev_identity} the following Pohozaev type identity holds:
\begin{eqnarray}\label{PohozaevI}
\frac{\mu (n+\alpha)}{2p}\mathcal{A}_p(u) & = & \frac{ (n-2s)}{2}\lambda [u]^2
+\frac{n}{2}\left\| u \right\|_2^2
+\left(\frac{n-2}{2}\right)\left\| \nabla u \right\|_2^2.
\end{eqnarray}
From \eqref{NehariI} and \eqref{PohozaevI}, we deduce that
\begin{equation*}
\frac{\mu}{2p}\mathcal{A}_p(u)
=  \frac{\left\| u \right\|_2^2}{n+\alpha-p(n-2)}
+\frac{ (1-s)}{(n+\alpha-p(n-2))}\lambda[u]^2
\end{equation*}
and thus
\begin{equation*}
S_{\lambda}(u)  =  \frac{\mu(p-1)}{2p}\mathcal{A}_p(u)
=  \frac{(p-1)\left\| u \right\|_2^2}{n+\alpha-p(n-2)}
+\frac{\lambda (1-s)(p-1)[u]^2}{(n+\alpha-p(n-2))} >0
\end{equation*}
for $\frac{n+\alpha}{n}<p<\frac{2s+n+\alpha}{n}\text { and } s\in(0,1).$	Now, if $u\in G$, we get
$\min\{ S_{\lambda}(u):\;u\in A\}=l>0.$  
Now, let us define
$$I_{\lambda}(u):=\frac{\left\| \nabla u \right\|_2^2}{2}+\frac{\lambda [u]^2}{2}- \frac{\mu \mathcal{A}_p(u)}{2p}
.$$
The following Gagliardo-Nirenberg inequality for Choquard nonlinearity will help us to study the boundedness of the functional $I$ on $\Gamma$.
\begin{pro}
For $\frac{n+\alpha}{n}\leq p\leq \frac{2s+n+\alpha}{n}$, 
there exists a positive constant $C_{n,p}$ such that :
\begin{equation}\label{G-N}
	\mathcal{A}_p(u)
	\leq C_{n,p}\left(\int_{\mathbb{R}^n}|\nabla u|^2dx\right)^{\frac{np-n-\alpha}{2}}\left(\int_{\mathbb{R}^n}|u|^2dx\right)^{\frac{n+\alpha-p(n-2)}{2}}
\end{equation}
for every $u\in H^1(\mathbb{R}^n)$.
\end{pro}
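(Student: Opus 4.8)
The plan is to derive \eqref{G-N} by chaining together two classical inequalities: first the Hardy--Littlewood--Sobolev inequality of \autoref{prop1.1} to collapse the double integral $\mathcal{A}_p(u)$ into a single Lebesgue norm, and then the standard (local) Gagliardo--Nirenberg interpolation inequality to trade that norm for a product of $\|\nabla u\|_2$ and $\|u\|_2$. Set $q := \frac{2np}{n+\alpha}$. Applying \eqref{co9} with $f = h = |u|^p$ and the dual exponents $t = r = \frac{2n}{n+\alpha}$ (so that $\frac1t+\frac1r = 1+\frac{\alpha}{n}$, and $|u|^p\in L^{\frac{2n}{n+\alpha}}$ precisely because $u\in L^q$), one obtains
\begin{equation*}
\mathcal{A}_p(u) \leq C(n,\alpha)\,\big\||u|^p\big\|_{\frac{2n}{n+\alpha}}^2 = C(n,\alpha)\,\|u\|_q^{2p},
\end{equation*}
which reduces the whole matter to controlling $\|u\|_q$.

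Next I would invoke the classical Gagliardo--Nirenberg inequality
\begin{equation*}
\|u\|_q \leq C_{GN}\,\|\nabla u\|_2^{\theta}\,\|u\|_2^{1-\theta}, \qquad u \in H^1(\mathbb{R}^n),
\end{equation*}
valid whenever $2 \leq q \leq 2^* = \frac{2n}{n-2}$, with the interpolation exponent $\theta$ fixed by the scaling identity $\frac1q = \theta\big(\frac12-\frac1n\big)+(1-\theta)\frac12$, i.e. $\theta = n\big(\frac12-\frac1q\big)$. Substituting $q = \frac{2np}{n+\alpha}$ gives $\frac{n}{q} = \frac{n+\alpha}{2p}$ and hence $\theta = \frac{np-n-\alpha}{2p}$. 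Raising the Gagliardo--Nirenberg bound to the power $2p$ and inserting it into the HLS estimate yields
\begin{equation*}
\mathcal{A}_p(u) \leq C(n,\alpha)\,C_{GN}^{2p}\,\|\nabla u\|_2^{2p\theta}\,\|u\|_2^{2p(1-\theta)},
\end{equation*}
and a direct computation gives $2p\theta = np-n-\alpha$ together with $2p(1-\theta) = n+\alpha-p(n-2)$. Thus, recalling $\int|\nabla u|^2 = \|\nabla u\|_2^2$ and $\int|u|^2 = \|u\|_2^2$ and setting $C_{n,p} := C(n,\alpha)\,C_{GN}^{2p}$, the right-hand side is exactly the product in \eqref{G-N} with the prescribed exponents $\frac{np-n-\alpha}{2}$ and $\frac{n+\alpha-p(n-2)}{2}$.

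The only point demanding care --- and it is a verification rather than a genuine obstacle, since all the analytic content is borne by the two cited inequalities --- is the admissibility of the interpolation, namely $\theta\in[0,1]$, equivalently $q\in[2,2^*]$. The condition $\theta\geq 0$ reads $np-n-\alpha\geq 0$, i.e. $p\geq\frac{n+\alpha}{n}$, which is the lower bound in the hypothesis (and at equality $q=2$, $\theta=0$, so the estimate degenerates to the trivial $\|u\|_2\leq\|u\|_2$). The condition $\theta\leq 1$ reads $p(n-2)\leq n+\alpha$, i.e. $p\leq\frac{n+\alpha}{n-2}=2^*_{\alpha}$; and for $s\in(0,1)$ one checks $\frac{2s+n+\alpha}{n}<\frac{n+\alpha}{n-2}$, which reduces to $s(n-2)<n+\alpha$ and holds because $s<1$ while $n-2<n+\alpha$. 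Hence the assumed range $\frac{n+\alpha}{n}\leq p\leq\frac{2s+n+\alpha}{n}$ sits inside the admissible window $\big[\frac{n+\alpha}{n},\,2^*_{\alpha}\big]$, so the Gagliardo--Nirenberg step is always legitimate and the argument closes.
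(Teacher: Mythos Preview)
Your proof is correct and follows the same overall strategy as the paper: apply Hardy--Littlewood--Sobolev to reduce $\mathcal{A}_p(u)$ to a Lebesgue norm, then interpolate via Gagliardo--Nirenberg. The only difference is a matter of bookkeeping. The paper applies \eqref{co9} with \emph{general} dual exponents $r,t$ satisfying $\tfrac1r+\tfrac1t=1+\tfrac{\alpha}{n}$, obtains two factors $\|u\|_{pr}^{p}$ and $\|u\|_{pt}^{p}$, invokes Gagliardo--Nirenberg on each, and then must verify that the interval $(1,\tfrac{n}{\alpha})\cap[\tfrac{2}{p},\tfrac{2n}{p(n-2)}]\cap[\tfrac{2n}{2(n+\alpha)-p(n-2)},\tfrac{2n}{2(n+\alpha)-np}]$ is nonempty so that admissible $r,t$ exist. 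Your symmetric choice $r=t=\tfrac{2n}{n+\alpha}$ collapses this to a single exponent $q=\tfrac{2np}{n+\alpha}$ and a single Gagliardo--Nirenberg application, so the admissibility check reduces to the transparent condition $q\in[2,2^*]$, i.e.\ $p\in[\tfrac{n+\alpha}{n},\tfrac{n+\alpha}{n-2}]$. Both routes land on the same exponents; yours is simply more direct.
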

\begin{proof}
Let $r, t>1$ be such that $\frac{1}{r}+\frac{1}{t}=1+\frac{\alpha}{n}$, then by the Hardy-Littlewood-Sobolev inequality we have:
$$\mathcal{A}_p(u)
=\int_{\mathbb{R}^n}\int_{\mathbb{R}^n}\frac{A_{\alpha}|u(x)|^p|u(y)|^p}{|x-y|^{n-\alpha}}dxdy\leq C_{n,r,s}\left(\int_{\mathbb{R}^n}|u|^{pr}dx\right)^{\frac{1}{r}}\left(\int_{\mathbb{R}^n}|u|^{pt}dx\right)^{\frac{1}{t}},$$
for $r,t\in (1,\frac{n}{\alpha})\cap [\frac{2}{p},\frac{2n}{p(n-2)}]$. Since $1<\frac{2n}{p(n-2)}$ and $\frac{2}{p}<\frac{n}{\alpha}$, we can find such $r$ and $t$.
Now taking $q>0$ such that $q+1=pr$, then by Gagliardo-Nirenberg inequality (see \cite[Theorem 1.1]{Fiorenza2021detailed}), precisely, $\left\| u \right\|_\beta\leq C_{n,\beta}\left\| \nabla u \right\|_2^{\theta}\left\| u \right\|_2^{1-\theta}$ where $\theta=\frac{N(\beta-2)}{2\beta}$, we have:
\begin{eqnarray*}
	\left(\int_{\mathbb{R}^n}|u|^{pr}\right)^\frac{1}{r} &
	\leq & \left(C_{n,q}\left(\int_{\mathbb{R}^n}|\nabla u|^2dx\right)^{\frac{n(q-1)}{4}}\left(\int_{\mathbb{R}^n}|u|^2dx\right)^{\frac{(q+1)}{2}-\frac{n(q-1)}{4}}\right)^{\frac{p}{q+1}}\\
	& = & C_{n,r}\left(\int_{\mathbb{R}^n}|\nabla u|^2dx\right)^{\frac{n(pr-2)}{4r}}\left(\int_{\mathbb{R}^n}|u|^2dx\right)^{\frac{p}{2}-\frac{n(pr-2)}{4r}}.
\end{eqnarray*}
Similarly, we can deduce that
$$\left(\int_{\mathbb{R}^n}|u|^{p{t}}\right)^\frac{1}{t} \leq C_{n,t}\left(\int_{\mathbb{R}^n}|\nabla u|^2dx\right)^{\frac{n(p{t}-2)}{4s}}\left(\int_{\mathbb{R}^n}|u|^2dx\right)^{\frac{p}{2}-\frac{n(p{t}-2)}{4s}}.$$
Thus we get
\begin{equation}\label{1}
	\mathcal{A}_p(u)
	\leq C_{n,r,{t}}\left(\int_{\mathbb{R}^n}|\nabla u|^2\right)^{\frac{n(pr-2)}{4r}+\frac{n(p{t}-2)}{4{t}}}\left(\int_{\mathbb{R}^n}|u|^2dx\right)^{p-\frac{n(pr-2)}{4r}-\frac{n(p{t}-2)}{4{t}}}.
\end{equation}
Since $\frac{1}{r}+\frac{1}{{t}}=1+\frac{\alpha}{n}$, we get ${t}=\frac{rn}{rn+\alpha r-n}$ which implies
$\frac{n(p{t}-2)}{4{t}}=\frac{prn-2rn-2\alpha r +2n}{4r}$
and hence
\begin{equation}\label{2}
	\frac{n(pr-2)}{4r}+\frac{n(p{t}-2)}{4{t}}=\frac{n(pr-2)}{4r}+\frac{prn-2rn-2\alpha r +2n}{4r}=\frac{np-n-\alpha}{2}.
\end{equation}
Note that taking such $r$ and $t$ makes sense because for $\frac{n+\alpha}{n}\leq p \leq \frac{2s+n+\alpha}{n}$, the following inequalities hold:
$$\frac{2n}{2(n+\alpha)-p(n-2)}\leq \frac{2n}{p(n-2)},\;\;\frac{2}{p}\leq \frac{2n}{2(n+\alpha)-np},$$
$$\frac{2n}{2(n+\alpha)-p(n-2)}<\frac{n}{\alpha}\text{ and } 1<\frac{2n}{2(n+\alpha)-np},$$
hence $(1,\frac{n}{\alpha})\cap[\frac{2}{p},\frac{2n}{p(n-2)}]\cap [\frac{2n}{2(n+\alpha)-p(n-2)},\frac{2n}{2(n+\alpha)-np}]$ is non-empty. Thus,
using \eqref{2} in \eqref{1}, we get:
$$\mathcal{A}_p(u)
\leq C_{n,p}\left(\int_{\mathbb{R}^n}|\nabla u|^2\right)^{\frac{np-n-\alpha}{2}}\left(\int_{\mathbb{R}^n}|u|^2dx\right)^{\frac{n+\alpha-p(n-2)}{2}}.$$
\end{proof}
\begin{lemma}\label{lemma3.1}
Define $-m_{\lambda} :=\inf\{I_{\lambda}(v):v\in \Gamma\}$, then for $\alpha \in (0,n)$ and $s\in(0,1)$, we get
\begin{enumerate}
	\item $m_{\lambda}>0$ whenever $\frac{n+\alpha}{n}<p<\frac{2s+n+\alpha}{n}$,
	\item $m_{\lambda}\leq 0$ for $p=\frac{2+n+\alpha}{n}$ and suitably small $\mu$,
	\item the functional $I$ is unbounded below for $\frac{2+n+\alpha}{n}<p$.
\end{enumerate}
\end{lemma}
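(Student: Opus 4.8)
The plan is to test the functional along the $L^2$-preserving dilation and read off the three competing powers of the scaling parameter. Fix a nonzero $w\in C_c^\infty(\mathbb{R}^n)$ normalized so that $\|w\|_2^2=\tau$, so that all of $\|\nabla w\|_2^2$, $[w]^2$ and $\mathcal{A}_p(w)$ are finite and strictly positive, and set $u_t(x):=t^{n/2}w(tx)$ for $t>0$. A change of variables gives $\|u_t\|_2^2=\|w\|_2^2=\tau$, so $u_t\in\Gamma$, while
\begin{equation*}
\|\nabla u_t\|_2^2=t^2\|\nabla w\|_2^2,\qquad [u_t]^2=t^{2s}[w]^2,\qquad \mathcal{A}_p(u_t)=t^{\,np-n-\alpha}\mathcal{A}_p(w).
\end{equation*}
Writing $\gamma:=np-n-\alpha$, this yields
\begin{equation*}
I_\lambda(u_t)=\frac{t^2}{2}\|\nabla w\|_2^2+\frac{\lambda t^{2s}}{2}[w]^2-\frac{\mu t^{\gamma}}{2p}\mathcal{A}_p(w).
\end{equation*}
Everything then reduces to comparing the exponents $2$, $2s$, $\gamma$, noting $2s<2$ always and that $\gamma=0,2s,2$ precisely when $p=\frac{n+\alpha}{n},\frac{2s+n+\alpha}{n},\frac{2+n+\alpha}{n}$ respectively.

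For item (1), the range $\frac{n+\alpha}{n}<p<\frac{2s+n+\alpha}{n}$ is exactly $0<\gamma<2s<2$. To get $m_\lambda<\infty$ (boundedness below), I would combine $T(u)\geq\|\nabla u\|_2^2$ with the Gagliardo--Nirenberg inequality \eqref{G-N}, which on $\Gamma$ gives
\begin{equation*}
I_\lambda(u)\geq \tfrac12\|\nabla u\|_2^2-\frac{\mu C_{n,p}}{2p}\,\tau^{\frac{n+\alpha-p(n-2)}{2}}\|\nabla u\|_2^{\gamma};
\end{equation*}
since $\gamma<2$, the right-hand side is bounded below as a function of $\|\nabla u\|_2$. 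For the strict sign $m_\lambda>0$, i.e. $\inf_\Gamma I_\lambda<0$, I would let $t\to0^+$ in the scaling identity: because $\gamma<2s<2$, the negative term $-\frac{\mu t^\gamma}{2p}\mathcal{A}_p(w)$ dominates the two positive terms (both $o(t^\gamma)$), so $I_\lambda(u_t)<0$ for $t$ small.

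For item (2), $p=\frac{2+n+\alpha}{n}$ gives $\gamma=2$, so \eqref{G-N} reads $\mathcal{A}_p(u)\leq C_{n,p}\tau^{(2+\alpha)/n}\|\nabla u\|_2^2$ on $\Gamma$, whence
\begin{equation*}
I_\lambda(u)\geq\Big(\tfrac12-\frac{\mu C_{n,p}}{2p}\tau^{(2+\alpha)/n}\Big)\|\nabla u\|_2^2+\frac{\lambda}{2}[u]^2,
\end{equation*}
which is nonnegative as soon as $\mu$ is small enough to make the bracket nonnegative; hence $\inf_\Gamma I_\lambda\geq0$, that is $m_\lambda\leq0$. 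For item (3), $p>\frac{2+n+\alpha}{n}$ means $\gamma>2>2s$, so in the scaling identity the term $-\frac{\mu t^\gamma}{2p}\mathcal{A}_p(w)$ dominates as $t\to\infty$ and $I_\lambda(u_t)\to-\infty$, giving unboundedness below.

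The routine but delicate point is the exact dilation behavior of the Gagliardo seminorm and the Choquard term; the genuinely new feature relative to the purely local case is the third exponent $2s$ arising from $[u]^2$, but since $s\in(0,1)$ forces $2s<2$, this middle term is always subordinate to whichever of $t^2$ or $t^\gamma$ governs the relevant limit and never alters the qualitative outcome. The main thing to get right is therefore the bookkeeping of which power wins in the two limits $t\to0^+$ and $t\to\infty$, together with the explicit smallness threshold on $\mu$ in the borderline case $\gamma=2$.
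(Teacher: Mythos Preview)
Your proposal is correct and follows essentially the same approach as the paper's proof: both rely on the $L^2$-preserving scaling $u_t(x)=t^{n/2}w(tx)$ to compare the exponents $2$, $2s$, and $\gamma=np-n-\alpha$, using the Gagliardo--Nirenberg inequality \eqref{G-N} for the lower bound and the scaling limits $t\to0^+$ (item 1) and $t\to\infty$ (item 3) for the sign conclusions. Your organization around the three exponents is perhaps slightly cleaner than the paper's, but the mathematical content is the same.
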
	
\begin{proof}
Claim : I is bounded below on $\Gamma$, for  $\frac{n+\alpha}{n}< p\leq\frac{2+n+\alpha}{n}$.\\
Proof of Claim. Let $u\in \Gamma$ be arbitrary, using \eqref{G-N} and Young's inequality, we deduce that
\begin{eqnarray*}
	I_{\lambda}(u) & \geq & \frac{1}{2}\left\| \nabla u\right\|_2^2- \frac{\mu C_{n,p}}{2p}(\left\| u\right\|_2^2)^{\frac{n+\alpha-p(n-2)}{2}}(\left\| \nabla u\right\|_2^2)^{\frac{np-n-\alpha}{2}}\\
	& \geq & \frac{1}{2}\left\| \nabla u\right\|_2^2-\frac{a^\kappa}{\kappa}-\frac{\left\| \nabla u\right\|_2^2(np-n-\alpha)}{2.2^\beta}
\end{eqnarray*}
with $a=\frac{C_{n,p}\mu}{p}(\tau)^{\frac{n+\alpha-p(n-2)}{2}}$, $\beta=\frac{2}{np-n-\alpha}>1$ and $\kappa=\frac{2}{n+\alpha+2-np}>1$. Therefore,
\begin{equation}\label{2.3}
	I_{\lambda}(u)\geq \frac{1}{2}\left(1-\frac{np-n-\alpha}{2}\right)\left\| \nabla u \right\|_2^2-\frac{a^\kappa}{\kappa} \geq -\frac{a^\kappa}{\kappa},\;\;\forall\,u\in \Gamma.
\end{equation}
\begin{itemize}
	\item[Proof of 1.] Let $u\in \Gamma$ and  $K>0$ be arbitrary. We define
	$u_K(x)=K^{\frac{n}{2}}u(Kx).$
	Clearly, $u_K\in \Gamma$ and 
	\begin{eqnarray}\label{Kargument}
		I_{\lambda}(u_K) 	&=&\frac{K^2}{2}\left\| \nabla u \right\|_2^2+\frac{K^{2s}}{2}\lambda[u]^2
		-\frac{\mu }{2p}K^{np-n-\alpha}\mathcal{A}_p(u).
	\end{eqnarray}
	Since $p<\frac{2s+n+\alpha}{n}$, we deduce that $I_{\lambda}(u_K)<0$ for small $K>0$. Hence $-m_{\lambda}<0.$
	\item[Proof of 2.]	Let $u\in \Gamma$ be arbitrary,
	then for $p= \frac{2+n+\alpha}{n}$,
	\begin{eqnarray*}
		I_{\lambda}(u) & \geq & \frac{\left\| \nabla u\right\|_2^2}{2}-\frac{n\mu C_{n,p}}{2+n+\alpha}(\tau)^{\frac{\alpha+2}{n}}\left\| \nabla u\right\|_2^2>0
	\end{eqnarray*}
	for $ \mu<\mu_{*}=\frac{2+n+\alpha}{2nC_{n,p}(\tau)^{\frac{\alpha+2}{n}}}$. Therefore, for each $\tau>0$, there exists a $\mu_{*}$ such that $m_{\lambda}\leq0$ for $\mu \in (0,\mu_{*})$.
	\item[Proof of 3.]	Let $u\in \Gamma$ and  $K>0$ be arbitrary. We define
	$u_K(x)=K^{\frac{n}{2}}u(Kx).$
	Clearly, $u_K\in \Gamma$ and again \eqref{Kargument} holds.
	Since $p>\frac{2+n+\alpha}{n}$, $K^{np-n-\alpha}>K^{2}>K^s$ for large $K$ and we get $\displaystyle -m_{\lambda}\leq \inf_{K}I(u_K)=-\infty$
	which ends the proof.
\end{itemize}
\end{proof}
 \begin{remark}\label{remarklambda=0}
	In case when $\lambda=0$, a simple inspection of the above proof yields that Assertion 1 of Lemma \ref{lemma3.1} holds for  for  $\frac{n+\alpha}{n}<p<\frac{2+n+\alpha}{n}$.
	\end{remark}
	\begin{lemma}\label{lemma3.2}
Assume that $\frac{n+\alpha}{n}<p<\frac{2s+n+\alpha}{n}$, then the minimization problem 
\begin{equation}\label{minI}
	\left\{ \begin{array}{rl}   	
		&  u \in \Gamma   \\
		&  I_{\lambda}(u) = \min\{I_{\lambda}(w)\;:\; w\in \Gamma \}
	\end{array}
	\right.
\end{equation}
has a solution for $\lambda>0$ being small enough.
\end{lemma}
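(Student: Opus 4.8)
The plan is to attain the infimum $-m_\lambda=\inf_{\Gamma}I_\lambda$ by the direct method coupled with P.-L.\ Lions' concentration--compactness principle. I first fix a minimizing sequence $\{u_k\}\subset\Gamma$, $I_\lambda(u_k)\to-m_\lambda$, and check it is bounded in $H^1(\mathbb{R}^n)$: since $\frac{n+\alpha}{n}<p<\frac{2s+n+\alpha}{n}<\frac{2+n+\alpha}{n}$ forces $\frac{np-n-\alpha}{2}<1$, the estimate \eqref{2.3} (where the nonnegative term $\frac{\lambda}{2}[u]^2$ was simply dropped) already gives $\frac12\big(1-\frac{np-n-\alpha}{2}\big)\|\nabla u_k\|_2^2\le I_\lambda(u_k)+\frac{a^\kappa}{\kappa}$, so $\{\|\nabla u_k\|_2\}$ is bounded; together with $\|u_k\|_2^2=\tau$ and the continuous embedding $H^1(\mathbb{R}^n)\hookrightarrow H^s(\mathbb{R}^n)$ (which controls $[u_k]^2$), this bounds $\{u_k\}$ in $H^1$. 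By Lemma~\ref{lemma3.1}(1) the infimum is strictly negative, $-m_\lambda<0$, and this sign is the engine that excludes vanishing.

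Next I run the concentration--compactness alternative on the mass densities $|u_k|^2$ (of total mass $\tau$). Vanishing is ruled out at once: if $\sup_{y\in\mathbb{R}^n}\int_{B_R(y)}|u_k|^2\to0$ for every $R>0$, Lions' lemma yields $u_k\to0$ in $L^q(\mathbb{R}^n)$ for all $q\in(2,2^*)$; feeding this into the Hardy--Littlewood--Sobolev inequality (\autoref{prop1.1}) exactly as in the derivation of \eqref{G-N}, where $\mathcal A_p(u)$ is bounded by products of $L^{pr},L^{pt}$ norms with $pr,pt$ chosen in $(2,2^*)$ in the subcritical range, gives $\mathcal A_p(u_k)\to0$ and hence $\liminf_k I_\lambda(u_k)\ge0$, contradicting $-m_\lambda<0$. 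After a suitable translation $v_k:=u_k(\cdot-y_k)$ I may therefore assume $v_k\rightharpoonup v$ in $H^1$ with $v\neq0$.

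Dichotomy I exclude through the strict subadditivity $\mathcal I_\lambda(\tau)<\mathcal I_\lambda(\sigma)+\mathcal I_\lambda(\tau-\sigma)$ for $0<\sigma<\tau$, where $\mathcal I_\lambda(\sigma):=-m_\lambda(\sigma)$. I obtain it from the pure amplitude dilation $u\mapsto\sqrt{\theta}\,u$, which scales the $L^2$ mass by $\theta$ and, for $\theta>1$, gives $I_\lambda(\sqrt{\theta}\,u)=\theta I_\lambda(u)-\frac{\mu}{2p}(\theta^p-\theta)\mathcal A_p(u)$; since along a negative-energy minimizing sequence $\frac{\mu}{2p}\mathcal A_p(u)\ge -I_\lambda(u)\to m_\lambda>0$ stays bounded below, the correction is strictly negative and yields $\mathcal I_\lambda(\theta\sigma)<\theta\,\mathcal I_\lambda(\sigma)$. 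The elementary consequence $\mathcal I_\lambda(\sigma)+\mathcal I_\lambda(\tau-\sigma)>\frac{\sigma+(\tau-\sigma)}{\tau}\mathcal I_\lambda(\tau)=\mathcal I_\lambda(\tau)$ then closes the dichotomy. Note that this amplitude scaling keeps the three terms of $I_\lambda$ in fixed proportion and so is insensitive to $\lambda$; the genuinely delicate scaling is the spatial one in \eqref{Kargument}, whose incompatible weights $K^2$, $K^{2s}$, $K^{np-n-\alpha}$ are what later require uniform control of the nonlocal seminorm, and it is there that I use $\lambda$ small, in line with the Lagrange-multiplier/Pohozaev analysis flagged in the Introduction.

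With vanishing and dichotomy excluded, the translated sequence is tight; hence $v_k\to v$ strongly in $L^2(\mathbb{R}^n)$ (so $\|v\|_2^2=\tau$ and $v\in\Gamma$) and in $L^q_{loc}$, while the $H^1$ bound plus Hardy--Littlewood--Sobolev make $\mathcal A_p(v_k)\to\mathcal A_p(v)$. Since $\|\nabla\,\cdot\,\|_2^2$ and $[\,\cdot\,]^2$ are weakly lower semicontinuous, $I_\lambda(v)\le\liminf_k I_\lambda(v_k)=-m_\lambda$, and as $v\in\Gamma$ the reverse inequality is automatic, so $I_\lambda(v)=-m_\lambda$ and $v$ solves \eqref{minI}. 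Finally, to see that $v$ is a bona fide critical point (the input for Theorem~\ref{Theorem 1.3}), I would pass beforehand to a Palais--Smale sequence via Ekeland's principle, record the Lagrange multipliers $\{\Lambda^\lambda_k\}$, and use the asymptotic Pohozaev identity behind \eqref{PohozaevI} to bound them and fix their sign for small $\lambda$. I expect the main obstacle to be precisely this recovery of compactness in the mixed local--nonlocal setting: the spatial scaling does not act homogeneously on $I_\lambda$, so both the exclusion of dichotomy at the level of the limit profile and the uniform estimate on $\{\Lambda^\lambda_k\}$ demand careful, $\lambda$-small control of the fractional seminorm along a sequence that converges only weakly.
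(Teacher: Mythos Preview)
Your proposal is correct in spirit and takes a genuinely different route from the paper. The paper does \emph{not} run Lions' trichotomy with a subadditivity inequality. Instead it passes at once (via Ekeland/Jeanjean) to a Palais--Smale minimizing sequence with Lagrange multipliers $\{\Lambda^\lambda_k\}$, extracts iteratively profiles $v^\lambda_0,v^\lambda_1,\dots$ as critical points of $\phi_*(u)=I_\lambda(u)-\Lambda^\lambda_0 H(u)$, and proves that each profile already carries full mass $H(v^\lambda_i)\ge\tau$. That last step (their Claim~3) is where the paper uses the \emph{spatial} rescaling $w(x)=\sigma^{-e}v^\lambda_i(x/\sigma^f)$ to compare energies at different masses; since this scaling hits $\|\nabla\cdot\|_2^2$, $[\,\cdot\,]^2$ and $\mathcal A_p$ with three incompatible powers, the resulting inequality \eqref{main} only yields a contradiction after sending $\lambda\to0^+$, and this is exactly the origin of the ``$\lambda$ small'' hypothesis.

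Your amplitude dilation $u\mapsto\sqrt{\theta}\,u$ bypasses this completely: it acts with the \emph{same} power $\theta$ on all quadratic pieces of $I_\lambda$ and with $\theta^p$ on $\mathcal A_p$, so the scaling identity $I_\lambda(\sqrt{\theta}\,u)=\theta I_\lambda(u)-\tfrac{\mu}{2p}(\theta^p-\theta)\mathcal A_p(u)$ is exact and $\lambda$-independent. Combined with $\mathcal I_\lambda(\sigma)<0$ for every $\sigma>0$ (Lemma~\ref{lemma3.1}(1) with $\tau$ replaced by $\sigma$), this yields the strict subadditivity $\mathcal I_\lambda(\tau)<\mathcal I_\lambda(\sigma)+\mathcal I_\lambda(\tau-\sigma)$ for \emph{every} $\lambda>0$. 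Your argument therefore proves Lemma~\ref{lemma3.2} without the smallness restriction, in line with the authors' own belief stated after Theorem~\ref{Theorem 1.4}. What the paper's longer approach buys is explicit information on the Lagrange multiplier ($\Lambda^\lambda_0<0$) and the ``asymptotic'' Pohozaev identity, which they reuse in Lemma~3.3; in your route these would be recovered \emph{a posteriori} from the attained minimizer.

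Two small points to tighten. First, in the dichotomy step you should record that both nonlocal pieces split correctly: the cross term in $[\,\cdot\,]^2$ decays thanks to $|x-y|^{-n-2s}$, and the ``middle'' remainder $u_k-u_k^1-u_k^2$ has small $L^q$ norm for $q\in[2,2^*)$ by interpolation (small $L^2$ mass, bounded $H^1$), hence small $\mathcal A_p$-contribution via \autoref{prop1.1}; this gives $I_\lambda(u_k)\ge I_\lambda(u_k^1)+I_\lambda(u_k^2)-o(1)$. Second, your closing paragraph is internally inconsistent: you correctly observe that the amplitude scaling is insensitive to $\lambda$, but then say the exclusion of dichotomy ``demands $\lambda$-small control''. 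It does not in your scheme; drop that clause. The smallness of $\lambda$ is genuinely needed in the paper only because of the spatial rescaling in their Claim~3, and you have avoided that rescaling altogether.
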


\begin{proof}
Step 1:
Let $\lambda>0$ and $\{u^{\lambda}_k\}$ be a minimizing sequence of $I_{\lambda}$ on $\Gamma$. We claim that $\{u_k^{\lambda}\}$ is bounded in $H^1(\mathbb{R}^n)$.\\
Since $\{I_{\lambda}(u^{\lambda}_k)\}$ converges to $-m_{\lambda}\in(-\infty,0)$ and fixing $M=2|m_\lambda|>0$, one has $|I(u_k^{\lambda})|\leq M$ for every $k\in \mathbb{N}$ (up to an extraction of a subsequence). By \eqref{2.3} and since $\frac{n+\alpha}{n}<p<\frac{2s+n+\alpha}{n}$, we deduce that
$$
\left\| \nabla u_k^{\lambda}\right\|^2_{2}  \leq  \frac{4}{2-np+n+\alpha}\left(M+\frac{a^\kappa}{\kappa}\right)=a'\;\text{ for all }k\in \mathbb{N}.
$$
Further,
\begin{equation*}
	\frac{\lambda [u_k^{\lambda}]^2}{2}  =  I_{\lambda}(u^{\lambda}_k)-\frac{\left\| \nabla u_k^{\lambda}\right\|_2^2}{2}+\frac{\mu}{2p}\mathcal{A}_p(u^{\lambda}_k)
	\leq  M+ \frac{\mu C_{n,p}}{2p}(a')^{\frac{np-n-\alpha}{2}}(\tau)^{\frac{n+\alpha-p(n-2)}{2}}
	=  b.
\end{equation*}
Hence, $\left\|u_k^{\lambda}\right\|^2_{H^1(\mathbb{R}^n)}\leq a'+\tau+2b$ for each $k\in \mathbb{N}$.\\
Step 2:
Taking $\{u_k^{\lambda}\}$ to be the minimizing sequence as deduced in Lemma 2.4 of \cite{jeanjean1997existence}, then by Lemma 2.5 of the same, we can find
a sequence of Lagrange multipliers $\{\Lambda^{\lambda}_k\}$ in $\mathbb{R}$ such that
$H(u^{\lambda}_k)=\tau$ for every $ k\in \mathbb{N}$, $\{I_{\lambda}(u^{\lambda}_k)\}\rightarrow -m_{\lambda}$ and $I_{\lambda}'(u^{\lambda}_k)-\Lambda^{\lambda}_k H'(u^{\lambda}_k)\rightarrow 0$ in $H^{-1}(\mathbb{R}^n)$ as $k\rightarrow \infty $.
Defining $\phi_k(u):=I_{\lambda}(u)-\Lambda^{\lambda}_k H(u)$, for all $u\in H^1(\mathbb{R}^n)$, we have
\begin{equation}\label{phi_n'}
	\phi_k'(u_k^{\lambda})  \rightarrow 0 \text{ as } k \rightarrow\infty.
\end{equation}
Claim 1 :  $\{\Lambda^{\lambda}_k\}$ is bounded and hence converges to $\Lambda^{\lambda}_0 \neq 0$  as $k\rightarrow\infty$, {up to} a subsequence.\\
By \eqref{phi_n'}, we get
$$\left\|\nabla u^{\lambda}_k\right\|_2^2 +\lambda [u^{\lambda}_k]^2-\mu \mathcal{A}_p(u^{\lambda}_k)
= 2\Lambda^\lambda_k\tau +o(1).$$
Thus, from \eqref{G-N} and step 1, $2\tau|\Lambda^\lambda_k| \leq  a'+2b+\mu C_{n,p}(\left\|\nabla u^{\lambda}_k \right\|_2^2)^{\frac{np-n-\alpha}{2}}(\tau)^{\frac{n+\alpha-p(n-2)}{2}} \leq C$.
Therefore, $\{\Lambda^\lambda_k\}$ is bounded and hence it admits a subsequence (denoted by $\{\Lambda^\lambda_k\}$ itself) converging to $\Lambda^\lambda_0\in \mathbb{R}$. Suppose that $\Lambda^\lambda_0=0$. Then,
$\displaystyle \lim_{k\rightarrow \infty}I_{\lambda}'(u_k^{\lambda})=\lim_{k\rightarrow \infty}\Lambda^\lambda_kH'(u^{\lambda}_k)=0 $ and
$$ \left\|\nabla u_k^{\lambda}\right\|_2^2 +\lambda[u^{\lambda}_k]^2 = \mu \mathcal{A}_p(u_k^{\lambda})
+o(1).$$
Thus, 
\begin{eqnarray*}
	-m_{\lambda} & = & \lim_{k\rightarrow \infty}I(u_k^{\lambda})
	=  \frac{1}{2}\left(1-\frac{1}{p}\right)\lim_{k\rightarrow\infty}\left(\left\|\nabla u^{\lambda}_k\right\|_2^2 +\lambda [u^{\lambda}_k]^2\right)
	\geq  0.
\end{eqnarray*}
which contradicts Lemma 3.1 when $\frac{n+\alpha}{n}<p<\frac{2s+n+\alpha}{n}$. Therefore $\Lambda^\lambda_0\neq0$.\\
Claim 2 :  $\Lambda^\lambda_0<0$ and $\{u_k^{\lambda}\}$ satisfies the "asymptotic" Pohozaev-type identity:
$$	  \mu \left(\frac{n+\alpha}{2p}\right)\mathcal{A}_p(u^{\lambda}_k)
+n\Lambda^\lambda_0\tau 
= 	\left(\frac{n-2}{2}\right)\left\|\nabla u^{\lambda}_k\right\|_2^2
+\frac{(n-2s)}{2}\lambda[u^{\lambda}_k]^2
+o(1).$$
We first establish that $\Lambda^\lambda_0<0$, above Pohozaev identity will be tackled later. Since, $\{u^{\lambda}_k\}$ is bounded in $H^1(\mathbb{R}^n)$, there exists a subsequence of $\{u_k^{\lambda}\}$, denoted by $\{u^{\lambda}_k\}$ itself, such that it converges weakly in $H^1(\mathbb{R}^n)$ and $H^s(\mathbb{R}^n)$ to $v^{\lambda}_0\in  H^1(\mathbb{R}^n)$. This implies, for any $v\in C_{c}^\infty(\mathbb{R}^n)$,
\begin{eqnarray*}
	\lim_{k\rightarrow \infty}\ll u^{\lambda}_k,v\gg
	&  = & \lim_{k \rightarrow \infty}\left(\langle u^{\lambda}_k,v\rangle_{H^s(\mathbb{R}^n)} -\langle u^{\lambda}_k,v\rangle_{L^2(\mathbb{R}^n)} \right)  =  \left(\langle v^{\lambda}_0,v\rangle_{H^s(\mathbb{R}^n)} -\langle v^{\lambda}_0,v\rangle_{L^2(\mathbb{R}^n)} \right)\\
	& = &\frac{C(n,s)}{2}\int_{\mathbb{R}^n}\int_{\mathbb{R}^n}\frac{(v^{\lambda}_0(x)-v^{\lambda}_0(y))(v(x)-v(y))}{|x-y|^{n+2s}}dxdy=\ll v^{\lambda}_0,v\gg.
\end{eqnarray*}
In addition, by weak convergence of $\{u^{\lambda}_k\}$ in $H^1(\mathbb{R}^n)$, one has
\begin{eqnarray*}
	\lim_{k\rightarrow \infty}\int_{\mathbb{R}^n}\nabla u^{\lambda}_k \nabla v & = &\int_{\mathbb{R}^n}\nabla v^{\lambda}_0\nabla v.
\end{eqnarray*}
	\noindent	Since, by Proposition 3.1 of \cite{moroz2017guide},
	$\mathcal{A}_p$ is continuously differentiable.
	Thus $\mathcal{A}_p'(u^{\lambda}_k)\rightarrow \mathcal{A}_p'(v^{\lambda}_0)$ [up to a subsequence] in $H^{-1}(\mathbb{R}^n)$. Then, for any $v\in H^1(\mathbb{R}^n)$, we get
	$$\displaystyle \lim_{k\rightarrow \infty}\int_{\mathbb{R}^n}(I_{\alpha}*|u^{\lambda}_k|^p)|u^{\lambda}_k|^{p-2}u^{\lambda}_kvdx=\int_{\mathbb{R}^n}(I_{\alpha}*|v^{\lambda}_0|^p)|v^{\lambda}_0|^{p-2}v^{\lambda}_0vdx.$$
	Using the above result and the fact that $\{\phi_k'(u^{\lambda}_k)\}\rightarrow 0$, we get
	\begin{equation*}
		\int_{\mathbb{R}^n}\nabla v^{\lambda}_0.\nabla vdx+\lambda \ll v^{\lambda}_0,v \gg
		=  \mu\int_{\mathbb{R}^n}(I_\alpha*|v^{\lambda}_0|^p)|v^{\lambda}_0|^{p-2}v^{\lambda}_0vdx
		+2\Lambda^\lambda_0\int_{\mathbb{R}^n}v^{\lambda}_0vdx
	\end{equation*}
	for each $v\in H^1(\mathbb{R}^n)$. Now, if we define
	$\phi_*(u):=I_{\lambda}(u)-\Lambda^\lambda_0 H(u)$
	then, $v^{\lambda}_0$ is a critical point of $\phi_*$, and thus solves
	$$\mathcal{L}(u)= \mu(I_{\alpha}*|u|^p)|u|^{p-2}u+2\lambda_0u\;\;\;\text{in }\mathbb{R}^n.$$
	 Therefore, 
		\begin{equation}\label{R2}
			\left\|\nabla v^{\lambda}_0\right\|_2^2 +\lambda [v^{\lambda}_0]^2 = 2\Lambda^\lambda_0\left\|v^{\lambda}_0\right\|_2^2
			+\mu \mathcal{A}_p(v^\lambda_0).
		\end{equation}
		Further by \autoref{Pohozaev_identity} $v^{\lambda}_0$ satisfies the following Pohozaev type identity
		
		\begin{equation}\label{R1}
			\left(\frac{n-2s}{2}\right)\lambda [v^{\lambda}_0]^2
			+ \left(\frac{n-2}{2}\right)\left\|\nabla v^{\lambda}_0\right\|_2^2
			=  \left(\frac{n+\alpha}{2p}\right)\mu\mathcal{A}_p(v^{\lambda}_0)
			+n\Lambda^\lambda_0\left\|v^{\lambda}_0\right\|_2^2.
		\end{equation}
	
		By \eqref{R1} and \eqref{R2}, we get
		\begin{eqnarray}\label{R3}
			2s\Lambda^\lambda_0H(v^{\lambda}_0) & = & (s-1)\left\|\nabla v^{\lambda}_0\right\|_2^2+\left(\frac{p(n-2s)-n-\alpha}{2p}\right)\mu\mathcal{A}_p(v^{\lambda}_0).
		\end{eqnarray} 
		Now, let $u_{k,1}^{\lambda}:=u^{\lambda}_k-v^{\lambda}_0$. Then $\{u_{k,1}^{\lambda}\}\rightharpoonup 0$ in $ H^1(\mathbb{R}^n)$ and
		\begin{eqnarray*}
			&&	\phi_k(u^{\lambda}_{k,1})-\phi_k(u^{\lambda}_k) =  I_{\lambda}(u^{\lambda}_{k,1})-\Lambda^\lambda_k H(u^{\lambda}_{k,1})-(I_{\lambda}(u^{\lambda}_k)-\Lambda^\lambda_k H(u^{\lambda}_k))\\
			&  & = \frac{1}{2}\int_{\mathbb{R}^n}(|\nabla u^{\lambda}_k-\nabla v^{\lambda}_0|^2-|\nabla u^{\lambda}_k|^2)dx-\Lambda^\lambda_k\int_{\mathbb{R}^n}(|u^{\lambda}_k-v^{\lambda}_0|^2-|u^{\lambda}_k|^2)dx\\
			& & +\frac{\lambda}{2}([u^{\lambda}_k-v^{\lambda}_0]^2-[u^{\lambda}_k]^2)
			-\frac{\mu}{2p}\int_{\mathbb{R}^n}((I_{\alpha}*|u^{\lambda}_k-v^{\lambda}_0|^p)|u^{\lambda}_k-v^{\lambda}_0|^p-(I_{\alpha}*|u^{\lambda}_k|^p)|u^{\lambda}_k|^p)dx.
		\end{eqnarray*}
		By Brezis-Lieb lemma, we have 
		$$\lim_{k\rightarrow \infty}\int_{\mathbb{R}^n}(|u^{\lambda}_k-v^{\lambda}_0|^2-|u^{\lambda}_k|^2+|v^{\lambda}_0|^2)dx=0.$$
		By weak convergence of $\{u^{\lambda}_k\}$ to $v^{\lambda}_0$ in $H^1(\mathbb{R}^n)$
		$$\lim_{k\rightarrow \infty}\int_{\mathbb{R}^n}(|\nabla u^{\lambda}_k-\nabla v^{\lambda}_0|^2-|\nabla u^{\lambda}_k|^2+|\nabla v^{\lambda}_0|^2)dx=0.$$
		By a counterpart of Brezis-Lieb lemma given in \cite[lemma~2.4]{moroz2013groundstates}, we have
		$$\lim_{k\rightarrow \infty}\left(\int_{\mathbb{R}^n}(I_{\alpha}*|u^{\lambda}_k-v^{\lambda}_0|^p)|u^{\lambda}_k-v^{\lambda}_0|^p-\int_{\mathbb{R}^n}(I_{\alpha}*|u^{\lambda}_k|^p)|u^{\lambda}_k|^p+\int_{\mathbb{R}^n}(I_{\alpha}*|v^{\lambda}_0|^p)|v^{\lambda}_0|^p\right)=0,$$
		and 
		\begin{eqnarray*}
			&&     	\lim_{k \rightarrow \infty}\frac{\lambda }{2}([u^{\lambda}_k-v^{\lambda}_0]^2-[u^{\lambda}_k]^2+[v^{\lambda}_0]^2)\\
			&=&\lim_{k\rightarrow \infty}\frac{\lambda}{2}\left(\langle u^{\lambda}_k-v^{\lambda}_0,u^{\lambda}_k-v^{\lambda}_0 \rangle_{H^s(\mathbb{R}^n)}-\langle u^{\lambda}_k-v^{\lambda}_0,u^{\lambda}_k-v^{\lambda}_0 \rangle_{L^2(\mathbb{R}^n)}-\langle u^{\lambda}_k,u^{\lambda}_k \rangle_{H^s(\mathbb{R}^n)}\right.\\
			&&\left.\;\;\;\;\;\;\;\;+\langle u^{\lambda}_k,u^{\lambda}_k \rangle_{L^2(\mathbb{R}^n)}+ \langle v^{\lambda}_0,v^{\lambda}_0 \rangle_{H^s(\mathbb{R}^n)}-\langle v^{\lambda}_0,v^{\lambda}_0 \rangle_{L^2(\mathbb{R}^n)}  \right)=0
		\end{eqnarray*}
		which yields
		\begin{eqnarray}
			\label{main_1}	\phi_k(u^{\lambda}_{k,1}) & = & \phi_k(u^\lambda_k)-\phi_*(v^{\lambda}_0)+o(1),\\
			\label{main_2}   H(u^{\lambda}_{k,1}) & = & H(u^{\lambda}_k)-H(v^{\lambda}_0)+o(1)=\tau-H(v^{\lambda}_0)+o(1),\\
			\label{main_3}    \phi_k'(u^{\lambda}_{k,1}) & = & \phi_k'(u^{\lambda}_k)-\phi_*'(v^{\lambda}_0)+o(1)=o(1)\text{ in } H^{-1}(\mathbb{R}^n).
		\end{eqnarray}
		Taking a test function $\psi\in H^1(\mathbb{R}^n)$, 
		one can proceed similarly as in proof of \cite[Lemma 2.4]{moroz2013groundstates} to show
		\begin{equation*}
			(I_{\alpha}*|u^{\lambda}_k-v^{\lambda}_0|^p)|u^{\lambda}_{k,1}|^{p-2}u^{\lambda}_{k,1}\psi-(I_{\alpha}*|u^{\lambda}_k|^p)|u^{\lambda}_k|^{p-2}u^{\lambda}_k\psi+(I_{\alpha}*|v^{\lambda}_0|^p)|v^{\lambda}_0|^{p-2}v^{\lambda}_0\psi=o(1).
		\end{equation*}
		In particular to get \eqref{main_3}, we combine (strong) convergence of $I_\alpha*(|u^{\lambda}_k-v^{\lambda}_0|^p)-I_\alpha*|u^{\lambda}_k|^p$, $|u^{\lambda}_k|^{p-2}u^{\lambda}_k\psi-|u^{\lambda}_k-v^{\lambda}_0|^{p-2}(u^{\lambda}_k-v^{\lambda}_0)\psi$ respectively to $I_\alpha*|v^{\lambda}_0|^p$ in $L^{\frac{2N}{N-\alpha}}(\mathbb{R}^n)$ and to $|v^{\lambda}_0|^{p-2}v^{\lambda}_0\psi$ in $L^{\frac{2N}{N+\alpha}}(\mathbb{R}^n)$ with weak convergence of $|u^\lambda_{k,1}|^{p-2}u^\lambda_{k,1}\psi$ and $|u^{\lambda}_k|^{p}$ respectively to $0$ and to $|v^{\lambda}_0|^{p}$ (thanks to a.e. convergence) in $L^{\frac{2N}{N+\alpha}}(\mathbb{R}^n)$. Precisely,
		we have:
		\begin{eqnarray*}
			\int_{\mathbb{R}^n}(I_{\alpha}*|u^{\lambda}_k|^p)|u^{\lambda}_k|^{p-2}u^{\lambda}_k\psi-\int_{\mathbb{R}^n}(I_{\alpha}*|u^{\lambda}_k-v^{\lambda}_0|^p)|u^{\lambda}_k-v^{\lambda}_0|^{p-2}(u^{\lambda}_k-v^{\lambda}_0)\psi&&\\
			=  \int_{\mathbb{R}^n}(I_{\alpha}*(|u^{\lambda}_k|^p-|u^{\lambda}_k-v^{\lambda}_0|^p))(|u^{\lambda}_k-v^{\lambda}_0|^{p-2}(u^{\lambda}_k-v^{\lambda}_0)\psi)&&\\
			+\int_{\mathbb{R}^n}(I_{\alpha}*(|u^{\lambda}_k|^p)(|u^{\lambda}_k|^{p-2}u^{\lambda}_k\psi-|u^{\lambda}_k-v_0|^{p-2}(u^{\lambda}_k-v^{\lambda}_0)\psi). &&
		\end{eqnarray*}
		By the strong convergence of $(I_{\alpha}*(|u^{\lambda}_k|^p-|u^{\lambda}_k-v^{\lambda}_0|^p))$ to $I_{\alpha}*|v^{\lambda}_0|^p$ in $L^{\frac{2n}{n-\alpha}}(\mathbb{R}^n)$ and weak convergence $|u^{\lambda}_k-v^{\lambda}_0|^{p-2}(u^{\lambda}_k-v^{\lambda}_0)\psi\rightharpoonup 0$ in $L^{\frac{2n}{n+\alpha}}(\mathbb{R}^n)$, we get
		$$\lim_{k \rightarrow \infty}\int_{\mathbb{R}^n}(I_{\alpha}*(|u^{\lambda}_k|^p-|u^{\lambda}_k-v^{\lambda}_0|^p))|u^{\lambda}_k-v^{\lambda}_0|^{p-2}(u^{\lambda}_k-v^{\lambda}_0)\psi=0,$$
		similarly, by the strong convergence $(|u^{\lambda}_k|^{p-2}u^{\lambda}_k\psi-|u^{\lambda}_k-v^{\lambda}_0|^{p-2}(u^{\lambda}_k-v^{\lambda}_0)\psi)$ to $|v^{\lambda}_0|^{p-2}v^{\lambda}_0\psi$ in $L^{\frac{2n}{n+\alpha}}(\mathbb{R}^n)$ and properties of Riesz potential, we get the strong convergence $I_\alpha *(|u^{\lambda}_k|^{p-2}u^{\lambda}_k\psi-|u^{\lambda}_k-v^{\lambda}_0|^{p-2}(u^{\lambda}_k-v^{\lambda}_0)\psi)$ to $I_\alpha*(|v^{\lambda}_0|^{p-2}v^{\lambda}_0\psi)$ in $L^{\frac{2n}{n-\alpha}}(\mathbb{R}^n)$ which together with weak convergence of $|u^{\lambda}_k|^p$ to $|v^{\lambda}_0|^p$ in $L^{\frac{2n}{n+\alpha}}(\mathbb{R}^n)$ (thanks to a.e. convergence) implies
		\begin{equation*}
			\lim_{k \rightarrow \infty}\int_{\mathbb{R}^n}(I_{\alpha}*(|u^{\lambda}_k|^p)(|u^{\lambda}_k|^{p-2}u^{\lambda}_k\psi-|u^{\lambda}_k-v^{\lambda}_0|^{p-2}(u^{\lambda}_k-v^{\lambda}_0)\psi)
			=\int_{\mathbb{R}^n}(I_{\alpha}*|v^{\lambda}_0|^p)|v^{\lambda}_0|^{p-2}v^{\lambda}_0\psi.
		\end{equation*}
		From above together with $\phi_k'(u^{\lambda}_k)=o(1)$ and $\phi'_*(v^{\lambda}_0)=0$, \eqref{main_3} follows.
		If $\{u^\lambda_{k,1}\}\rightarrow 0$ in $ H^1(\mathbb{R}^n)$, then 
		\begin{equation*}
			\lim_{k \rightarrow \infty}	\left\|\nabla u^{\lambda}_k\right\|_2^2  =  \left\|\nabla v^{\lambda}_0\right\|_2^2\;;\; \Lambda^\lambda_0 \tau=  \lim_{k \rightarrow \infty}\Lambda^\lambda_k \left\| u^{\lambda}_k\right\|_2^2  = \Lambda^\lambda_0\left\|v^{\lambda}_0\right\|_2^2;\;\; 		\lim_{k \rightarrow \infty}\lambda [u^{\lambda}_k]^2  =  \lambda[v^{\lambda}_0]^2
		\end{equation*}
		and by \eqref{G-N}, we have
		$$\mathcal{A}_p(u^\lambda_{k,1})
		\leq C_{n,p}\left(\int_{\mathbb{R}^n}|\nabla u^\lambda_{k,1}|^2dx\right)^{\frac{np-n-\alpha}{2}}\left(\int_{\mathbb{R}^n}|u^\lambda_{k,1}|^2dx\right)^{\frac{n+\alpha-p(n-2)}{2}}\rightarrow 0 \text{ as } k\rightarrow \infty.$$
		Therefore,
		\begin{equation*}
			I_{\lambda}(v^{\lambda}_0)=\phi_*(v^{\lambda}_0)  =  \lim_{k \rightarrow \infty}\phi_k(u^{\lambda}_k)
			=  \lim_{k \rightarrow \infty}(I_{\lambda}(u^{\lambda}_k)-\Lambda^\lambda_k H(u^{\lambda}_k))=-m_{\lambda}-\Lambda^\lambda_0\tau.
		\end{equation*}
		Hence, $v^{\lambda}_0 \in \Gamma$ is the solution to \eqref{minI}. Furthermore, by \eqref{R3}
		\begin{equation*}
			\Lambda^\lambda_0 =  \frac{(s-1)}{2s\tau}\left\|\nabla v^{\lambda}_0\right\|_2^2+\left(\frac{p(n-2s)-n-\alpha}{4ps\tau}\right)\mu \mathcal{A}_p(v^{\lambda}_0)
			<0
		\end{equation*} 
		for $\frac{n+\alpha}{n}<p<\frac{2s+n+\alpha}{n} \text{ and } s \in (0,1).$
		
		Now, we deal with the case where weak convergence does not imply strong convergence in $ H^1(\mathbb{R}^n)$.
		This case can be further devided into the following two subcases:
		$$ \text{Subcase 1 :  } \lim_{k\rightarrow \infty}\left\|u^{\lambda}_k\right\|_2^2 = \left\|v^{\lambda}_0\right\|_2^2\;;\;
		\text{Subcase 2 :  } \lim_{k\rightarrow \infty}\left\|u^{\lambda}_k\right\|_2^2 \neq \left\|v^{\lambda}_0\right\|_2^2.$$
		Subcase 1:
		Clearly $v^{\lambda}_0\neq 0$ as $\left\|v^{\lambda}_0\right\|_2^2=\tau >0$ and hence again by \eqref{R3} $\Lambda^\lambda_0<0$, which gives $\Lambda^\lambda_k<0$ for sufficiently large $k$ and
		$\displaystyle \lim_{k \rightarrow \infty}\left\|u^\lambda_{k,1}\right\|_2^2=0.$
		Also, since $\{u^\lambda_{k,1}\}$ is weakly convergent hence bounded in $ H^1(\mathbb{R}^n)$, then by using \eqref{G-N},  
		$\displaystyle\lim_{k\rightarrow \infty}\mathcal{A}_p(u^\lambda_{k,1})
		=0.$
		This gives
		$$0=\lim_{k\rightarrow \infty}\phi_k'(u^\lambda_{k,1})(u^\lambda_{k,1}) =  \lim_{k\rightarrow \infty}\left(\left\|\nabla u^\lambda_{k,1}\right\|_2^2+\lambda [u^\lambda_{k,1}]^2\right).$$
		Thus, $\{u^\lambda_{k,1}\}\rightarrow 0$ in $ H^1(\mathbb{R}^n)$ and  then as we argue previously, we are done.\\
		Subcase 2:
		Since $\displaystyle \lim_{k\rightarrow \infty}\left\|u^{\lambda}_k\right\|_2^2 \neq \left\|v^\lambda_0\right\|_2^2$, therefore $u^\lambda_{k,1}\nrightarrow 0$ in $L^2(\mathbb{R}^n)$ as $k\rightarrow\infty$ . Then, by Concentration compactness lemma (see\cite[lemma~I.1]{Lions1984concentration}), if $B_1$ is denoting the unit ball of $\mathbb{R}^n$, we can find $\delta_1>0$ and a sequence $\{\zeta_k^1\}$ in $\mathbb{R}^n$ such that  
		\begin{equation}\label{u_n^1}
			\int_{B_1}|u^\lambda_{k,1}(x+\zeta_k^1)|^2dx\geq \delta_1>0
		\end{equation}
		for sufficiently large $k$ (and fixed $\lambda$). Now, we can infer that there exists a such sequence with $|\zeta_k^1|\to\infty$ (as $k\to \infty$) from local compactness in $L^2(\mathbb{R}^n)$ and since $\displaystyle \lim_{k\rightarrow \infty}\left\|u^\lambda_k\right\|_2^2 \neq \left\|v^\lambda_0\right\|_2^2$.
	Define $v^\lambda_{k,1}(x):=u^\lambda_{k,1}(x+\zeta_k^1)$. Clearly, $\{v^\lambda_{k,1}\}$ is bounded in $ H^1(\mathbb{R}^n)$ and hence weakly convergent up to subsequence. Let us denote $v^{\lambda}_1$ the limit of $\{v^\lambda_{k,1}\}$. Then, it implies that $v^\lambda_{k,1}\rightarrow v^{\lambda}_1$ in $L^2_{loc}(\mathbb{R}^n)$ as $k\rightarrow \infty$ and from \eqref{u_n^1}, $v^\lambda_1\neq 0$.
	Using again \eqref{u_n^1} and the fact that $\phi_k'(u^\lambda_{k,1})\rightarrow 0$ in $H^{-1}(\mathbb{R}^n)$ as $k\rightarrow \infty$, we can deduce that 
	\begin{equation*}
		\int_{\mathbb{R}^n}\nabla v^\lambda_1 \nabla wdx-2\Lambda^\lambda_0 \int_{\mathbb{R}^n}v^\lambda_1wdx 
		- \mu \int_{\mathbb{R}^n}(I_{\alpha}*|v^\lambda_1|^p)|v^\lambda_1|^{p-2}v^\lambda_1 wdx=\lambda\ll v^\lambda_1,w \gg
	\end{equation*}
	for every $w\in C_c^{\infty}(\mathbb{R}^n)$. Thus, $v^\lambda_1$ is a critical point of $\phi_*$. Now, as done in \eqref{R3}, 
	\begin{equation*}
		2s\Lambda^\lambda_0H(v^\lambda_1)  =  (s-1)\left\|\nabla v^\lambda_1\right\|_2^2dx+\left(\frac{p(n-2s)-n-\alpha}{2p}\right)\mu\mathcal{A}_p(v^\lambda_1)
	\end{equation*} 
	which gives
	\begin{equation*}
		\Lambda^\lambda_0  =  \frac{(s-1)}{2sH(v_1^\lambda)}\left\| \nabla v^\lambda_1 \right\|_2^2+\left(\frac{p(n-2s)-n-\alpha}{4psH(v_1)}\right)\mu\mathcal{A}_p(v^\lambda_1)
		<0 
	\end{equation*}
	for $\frac{n+\alpha}{n}<p<\frac{2s+n+\alpha}{n} \text{ and } s\in(0,1).$
	Now we show the "asymptotic" Pohozaev type identity. Define $u^\lambda_{k,2}(x):=u^\lambda_{k,1}(x)-v^\lambda_1(x-\zeta_k^1)$, then 
	
	for any $\psi\in C_c^{\infty}(\mathbb{R}^n)$ we have
	$$	\langle u^\lambda_{k,2}, \psi\rangle_{H^1(\mathbb{R}^n)}  =  \langle u^\lambda_{k,1}, \psi\rangle_{H^1(\mathbb{R}^n)} -\langle v^\lambda_1, \psi_k \rangle_{H^1(\mathbb{R}^n)}, $$
	where $\psi_k(x)=\psi(x+\zeta_k^1)$ for all $k\in \mathbb{N}$. Using the weak convergence of $\{u^\lambda_{k,1}\}$ and the fact that $|\zeta_k^1|\rightarrow \infty$ we get $ \langle u^\lambda_{k,2}, \psi \rangle \rightarrow 0$ since $\psi$ has compact support. Hence by
	density of $C_c^{\infty}(\mathbb{R}^n)$ in $H^1(\mathbb{R}^n)$, we get $u^\lambda_{k,2}\rightharpoonup 0$ in $H^1(\mathbb{R}^n)$ as $k\rightarrow \infty$.
	Proceeding, as in \eqref{main_1}, \eqref{main_2} and \eqref{main_3}, we conclude that
	\begin{equation*}
		\phi_k(u^\lambda_{k,2})  =  \phi_k(u^\lambda_k)-\phi_*(v^\lambda_0)-\phi_*(v^\lambda_1)+o(1)\rightarrow -m_\lambda-\Lambda^\lambda_0\tau-\sum_{i=0}^{1}\phi_*(v^\lambda_i) \text{ as } k\rightarrow \infty,
	\end{equation*}
	\begin{equation*}
		H(u^\lambda_{k,2})  =  H(u^\lambda_k)-H(v^\lambda_0)-H(v^\lambda_1)+o(1)=\tau-\sum_{i=0}^{1}H(v^\lambda_i)+o(1)\;,\;\phi_k'(u^\lambda_{k,2})  \rightarrow  0 \text{ as } k\rightarrow \infty
	\end{equation*}
	and since $v^\lambda_1$ is a critical point of $\phi_*$, then similarly as \eqref{R1} we get
	\begin{equation*}
		\left(\frac{n-2s}{2}\right)\lambda [v^\lambda_1]^2
		+ \left(\frac{n-2}{2}\right)\left\|\nabla v^\lambda_1\right\|_2^2 
		=  \left(\frac{n+\alpha}{2p}\right)\mu\mathcal{A}_p(v^\lambda_1)
		+n\Lambda^\lambda_0\left\|v^\lambda_1\right\|_2^2. \nonumber
	\end{equation*}
	Therefore from the above expression and \eqref{R1},
	\begin{equation*}
		\left(\frac{n-2s}{2}\right)\lambda \sum_{i=0}^{1}[v^\lambda_i]^2
		+ \left(\frac{n-2}{2}\right)\sum_{i=0}^{1}\left\|\nabla v^\lambda_i\right\|_2^2
		=  \left(\frac{n+\alpha}{2p}\right)\mu\sum_{i=0}^{1}\mathcal{A}_p(v^\lambda_i)
		+n\Lambda^\lambda_0\sum_{i=0}^{1}\left\|v^\lambda_i\right\|_2^2. \nonumber
	\end{equation*}
	 If $u^\lambda_{k,2}\rightarrow0$ strongly in $H^1(\mathbb{R}^n)$ as $k\rightarrow \infty$, then by definition of $u^\lambda_{k,2}$, we get $u^\lambda_{k,1}-v^\lambda_{1}(.-\zeta_k^1)\rightarrow 0$ as $k\rightarrow \infty$ and hence $v^\lambda_{k,1}-v^\lambda_1=u^\lambda_{k,1}(.+\zeta_k^1)-v^\lambda_1\rightarrow 0$ in $H^1(\mathbb{R}^n)$ as $k\rightarrow\infty$. Thus, by continuity of $\phi_*$
		\begin{eqnarray}\label{phi*(v_1)}
			\phi_*(v^\lambda_1) & = & \lim_{k \rightarrow \infty} \phi_*(v^\lambda_{k,1}) =\lim_{k \rightarrow \infty}\phi_k(u^\lambda_{k,1})=\lim_{k \rightarrow \infty} \phi_k(u^\lambda_k) -\phi_*(v^\lambda_0)\nonumber\\
			& = & -m_{\lambda}-\Lambda^\lambda_0 \tau -\phi_*(v^\lambda_0).
		\end{eqnarray}
	\noindent Now suppose that $\displaystyle \lim_{k\rightarrow \infty}u^\lambda_{k,2}= 0$ in $H^1(\mathbb{R}^n)$ does not hold.  Then, from $\phi_k'(u^\lambda_{k,1})\rightarrow 0$ and $\{u^\lambda_{k,1}\}\rightharpoonup 0$ in $ H^1(\mathbb{R}^n)$ as $k\rightarrow \infty$, we obtain that $\phi_k'(u^\lambda_{k,1})(u^\lambda_{k,1})\rightarrow 0$. Using this and the fact that $v^\lambda_1$ is a critical point of $\phi_*$, we get 
	$$	\phi_*'(v^\lambda_1)(v^\lambda_1) = 0  =  \lim_{k \rightarrow \infty}\phi_k'(u^\lambda_{k,1})(u^\lambda_{k,1})  =  \lim_{k \rightarrow \infty} (I_\lambda'(u^\lambda_{k,1})(u^\lambda_{k,1})-\Lambda^\lambda_k H(u^\lambda_{k,1})(u^\lambda_{k,1})),$$
	that is,
	\begin{eqnarray}\label{2.13}
		&&\left\|\nabla v^\lambda_1\right\|_2^2 +\lambda[v^\lambda_1]^2 -2\Lambda^\lambda_0H(v^\lambda_1)	-\mu\mathcal{A}_p(v^\lambda_1) 
		\nonumber\\
		&& =  \lim_{k \rightarrow \infty}\left(\left\|\nabla u^\lambda_{k,1}\right\|_2^2-2\Lambda^\lambda_k \left\|u^\lambda_{k,1}\right\|_2^2
		+\lambda [u^\lambda_{k,1}]^2 -\mu\mathcal{A}_p(u^\lambda_{k,1})
		\right).
	\end{eqnarray}
	Since $u\mapsto \left\|\nabla u\right\|_2^2$ is a continuous convex functional, then (see Theorem 1.5.3 in \cite{badiale2010semilinear} for instance), it is weakly lower semicontinuous and hence
	\begin{equation*}
		\left\|\nabla v^\lambda_1\right\|_2^2 \leq\liminf \left\|\nabla v^\lambda_{k,1}\right\|_2^2=\liminf \int_{\mathbb{R}^n}|\nabla u^\lambda_{k,1}(x+\zeta_k^1)|^2dx=\liminf \left\|\nabla u^\lambda_{k,1}\right\|_2^2.
	\end{equation*}
	 Now, if $v^\lambda_{k,1}\rightarrow v^\lambda_1$ in $L^2(\mathbb{R}^n)$ as $k\rightarrow \infty$, then 
		\begin{eqnarray*}
			&&	\left|\int_{\mathbb{R}^n}(I_{\alpha}*|v^\lambda_{k,1}-v^\lambda_1|^p)|v^\lambda_{k,1}-v^\lambda_1|^pdx\right| \\
			& \leq & C_{n,p}\left(\int_{\mathbb{R}^n}|\nabla v^\lambda_{k,1}-\nabla v^\lambda_1|^2dx\right)^{\frac{np-n-\alpha}{2}}\left(\int_{\mathbb{R}^n} |v^\lambda_{k,1}-v^\lambda_1|^2dx\right)^{\frac{n+\alpha-p(n-2)}{2}}
			\rightarrow 0 \text{ as } k\rightarrow \infty
		\end{eqnarray*}
		that is $\displaystyle \lim_{k\rightarrow \infty}\mathcal{A}_p(u^\lambda_{k,1})
		= \lim_{k\rightarrow \infty} \mathcal{A}_p(v^\lambda_{k,1})
		= \mathcal{A}_p(v^\lambda_1)
		.$
		Using this in \eqref{2.13}, we get
		\begin{equation*}
			\left\| v^\lambda_1\right\|^2   =\left\|\nabla v^\lambda_1\right\|_2^2 +\lambda  [v^\lambda_1]^2+\left\| v^\lambda_1\right\|_2^2=  \lim_{k \rightarrow \infty}\left(\left\|\nabla u^\lambda_{k,1}\right\|_2^2+ \lambda [u^\lambda_{k,1}]^2+\left\|u^\lambda_{k,1}\right\|_2^2\right)=0
		\end{equation*}
		Since $\left\| u^\lambda_{k,1}\right\|_{H^1(\mathbb{R}^n)}=\left\| v^\lambda_{k,1}\right\|_{H^1(\mathbb{R}^n)}$, it implies $v^\lambda_{k,1}\rightarrow v^\lambda_1$  in $ H^1(\mathbb{R}^n)$ as $k\rightarrow\infty$ but as $\displaystyle \lim_{k\rightarrow \infty}u^\lambda_{k,2}\neq 0$ in $ H^1(\mathbb{R}^n)$ we get a contradiction.
		Hence $v^\lambda_{k,1}\nrightarrow v^\lambda_1$ in $L^2(\mathbb{R}^n)$. 
Thus,
$$0 \neq \lim_{k \rightarrow \infty}\int_{\mathbb{R}^n}|v^\lambda_{k,1}(x)-v^\lambda_1(x)|^2dx=\lim_{k \rightarrow \infty}\int_{\mathbb{R}^n}|u^\lambda_{k,2}(x+\zeta_k^1)|^2dx=\lim_{k \rightarrow \infty}\int_{\mathbb{R}^n}|u^\lambda_{k,2}|^2dx $$
which means $u^\lambda_{k,2}\nrightarrow 0$ in $L^2(\mathbb{R}^n)$ as $k\rightarrow \infty$, then as done earlier, we can find 
$\delta_2>0$ and a sequence $\{\zeta_k^2\}$ in $\mathbb{R}^n$ such that  $|\zeta_k^2|\rightarrow \infty$ as $k\rightarrow \infty$ and
\begin{equation}\label{u_n^2}
	\int_{B_1}|u^\lambda_{k,2}(x+\zeta_k^2)|^2dx\geq \delta_2>0
\end{equation}
for sufficiently large $k$.
Define $v^\lambda_{k,2}(x):=u^\lambda_{k,2}(x+\zeta_k^2)$, clearly $\{v^\lambda_{k,2}\}$ is bounded and hence weakly convergent up to subsequence (still denoted by $\{v^\lambda_{k,2}\}$) in $ H^1(\mathbb{R}^n)$. Let $v^\lambda_2\in  H^1(\mathbb{R}^n)$ be such that
$v^\lambda_{k,2}\rightharpoonup v^\lambda_2 \text{ weakly in }  H^1(\mathbb{R}^n)$ as $k\rightarrow\infty$.
Now, since $\phi_k'(u^\lambda_{k,2})\rightarrow 0$ as $k\rightarrow \infty$, then similarly as done above, $v^\lambda_2$ turns out to be a critical point of $\phi_*$ . We can again define
$u^\lambda_{k,3}(x):=u^\lambda_{k,2}(x)-v^\lambda_2(x-\zeta_k^2).$
Again, if $u^\lambda_{k,3}\rightarrow 0$ in $ H^1(\mathbb{R}^n)$ as $k\rightarrow\infty$, then the process stops and we are done and if $\{u^\lambda_{k,3}\}\nrightarrow 0$ then we need to repeat the process.
Precisely, iterating the above procedure, we can construct the sequences $\{u^\lambda_{k,j}\}_{j\in \mathbb{N}}$ and $\{\zeta_k^j\}_{j\in \mathbb{N}}$ such that $u^\lambda_{k,j+1}(x)  =  u^\lambda_{k,j}(x)-v^\lambda_j(x-\zeta_k^j)$, $v^\lambda_j$ being the weak limit of $\{u^\lambda_{k,j}\}$ in $H^1(\mathbb{R}^n)$ with $v^\lambda_{j}\neq 0$ and:
$$H(u^\lambda_{k,j})  =  H(u^\lambda_k)-\sum_{i=0}^{j-1}H(v^\lambda_i)+o(1);\;\;T(u^\lambda_{k,j}) =T(u^\lambda_k)-\sum_{i=0}^{j-1}T(v^\lambda_i)+o(1),$$
$$[u^\lambda_{k,j}]^2=[u^\lambda_k]^2-\sum_{i=0}^{j-1}[v^\lambda_i]^2+o(1),$$
\begin{equation*}
	\phi_k(u^\lambda_{k,j})  =  \phi_k(u^\lambda_k)-\sum_{i=0}^{j-1}\phi_*(v^\lambda_i)+o(1)
	\rightarrow -m_\lambda-\tau \Lambda^\lambda_0-\sum_{i=0}^{j-1}\phi_*(v^\lambda_i) \text{ as } k\rightarrow \infty.
\end{equation*}
We can see that $v^\lambda_i$ is a critical point of $\phi_* $ for each $i$ and hence it is a solution of
$$\mathcal{L}(u)=\mu(I_{\alpha}*|u|^p)|u|^{p-2}u+2\Lambda^\lambda_0u \;\;\text{ in }\mathbb{R}^n.$$
Now, since for $0\leq i\leq j-1$ $v^\lambda_i$ is a solution of above mentioned equation, we have from \autoref{Pohozaev_identity}:
\begin{equation}\label{2.15}
	\left(\frac{n-2s}{2}\right)\lambda [v^\lambda_i]^2 
	+ \left(\frac{n-2}{2}\right)\left\|\nabla v^\lambda_i\right\|_2^2
	=  \left(\frac{n+\alpha}{2p}\right)\mu\mathcal{A}_p(v^\lambda_i)
	+n\Lambda^\lambda_0\left\|v^\lambda_i\right\|_2^2 
\end{equation}
and
\begin{equation}\label{2.16}
	2s\Lambda^\lambda_0H(v_i^\lambda)  =  (s-1)\left\|\nabla v^\lambda_i\right\|_2^2
	+\left(\frac{p(n-2s)-n-\alpha}{2p}\right)\mu \mathcal{A}_p(v^\lambda_i)
	,\;\;\forall \,0\leq i\leq j-1.
\end{equation} 
As $v^\lambda_i$ is a critical point of $\phi_*$ and $\Lambda^\lambda_0<0$, then by \eqref{G-N} we can deduce the following
\begin{eqnarray}\label{2.17}
	T(v^\lambda_i) & = & \mu\mathcal{A}_p(v^\lambda_i)  
	+2\Lambda^\lambda_0H(v^\lambda_i) 
	<  \mu \mathcal{A}_p(v^\lambda_i)
	\leq  \mu C_{n,p} (T(v^\lambda_i))^{\frac{np-n-\alpha}{2}}(H(v^\lambda_i))^{\frac{n+\alpha-p(n-2)}{2}}\nonumber\\
	\Rightarrow T(v^\lambda_i) & < & K_0(H(v^\lambda_i))^{\frac{n+\alpha-p(n-2)}{2-np+n+\alpha}}, \text{ where } K_0=(\mu C_{n,p})^{\frac{2}{2-np+n+\alpha}}.
\end{eqnarray}
Then, by \eqref{2.16} and \eqref{2.17}
\begin{eqnarray*}
	H(v^\lambda_i) & = & \frac{(s-1)}{2s\Lambda^\lambda_0}\left\|\nabla v^\lambda_i\right\|^2+\mu \frac{(p(n-2s)-n-\alpha)}{4ps\Lambda^\lambda_0}\mathcal{A}_p(v^\lambda_i)
	\\
	& \leq & \frac{(s-1)}{2s\Lambda^\lambda_0}T(v^\lambda_i)+\mu \frac{(p(n-2s)-n-\alpha)}{4ps\Lambda^\lambda_0}C_{n,p}(T(v^\lambda_i))^{\frac{np-n-\alpha}{2}}(H(v^\lambda_i))^{\frac{n+\alpha-p(n-2)}{2}}\\
	& < & \frac{(s-1)}{2s\Lambda^\lambda_0}K_0(H(v^\lambda_i))^{\frac{n+\alpha-p(n-2)}{2-np+n+\alpha}}+C(H(v^\lambda_i))^{\left(\frac{n+\alpha-p(n-2)}{2-np+n+\alpha}\right)\left(\frac{np-n-\alpha}{2}\right)}(H(v^\lambda_i))^{\frac{n+\alpha-p(n-2)}{2}}\\
	&& \text{ with } C=\frac{\mu(p(n-2s)-n-\alpha)}{4ps\Lambda^\lambda_0}C_{n,p}K_0\\
	& = & D(H(v^\lambda_i))^{\frac{n+\alpha-p(n-2)}{2-np+n+\alpha}}, \text{ where } D=\frac{K_0}{2s\Lambda^\lambda_0}\left(s-1+\mu C_{n,p}\frac{p(n-2s)-n-\alpha}{2p}\right)>0.
\end{eqnarray*}
Therefore, for any $i$, $H(v^\lambda_i)\geq \left(\frac{1}{D}\right)^{\beta}\;\;\text{ with }\beta=\frac{2-np+n+\alpha}{2(p-1)},$
which gives for $k$ large
\begin{eqnarray*}
	H(u^\lambda_{k,j}) & = & H(u^\lambda_k)-\sum_{i=0}^{j-1}H(v^\lambda_i)+o(1)
	\leq  \tau-j\left(\frac{1}{D}\right)^{\beta}+\epsilon 
\end{eqnarray*}
for some  $\epsilon >0$ small enough.
Since $H(u^\lambda_{k,j})\geq 0$, j must be finite, which means that the iteration must stop somewhere. Therefore, there exists $K_0 \in\mathbb{N}$ such that $u^\lambda_{k,K_0}\rightarrow 0 \text{ in } H^1(\mathbb{R}^n)$ as $k\rightarrow \infty$.
Since $ H^1(\mathbb{R}^n)$ is continuously embedded in $L^2(\mathbb{R}^n)$, we have
$$o(1)=H(u^\lambda_{k,K_0})=H(u^\lambda_k)-\sum_{i=0}^{K_0-1}H(v^\lambda_i)+o(1).$$ Thus,
\begin{equation}\label{sumH(v_i)}
	\sum_{i=0}^{K_0-1}H(v^\lambda_i)=\tau=\lim_{k \rightarrow \infty}H(u^\lambda_k).
\end{equation}
Similarly, since $ H^1(\mathbb{R}^n)$ is continuously embedded in $H^s(\mathbb{R}^n)$ (for $0<s<1$), we get
\begin{equation}\label{sumT(v_i)}
	\sum_{i=0}^{K_0-1}T(v^\lambda_i)=\lim_{k\rightarrow \infty} T(u^\lambda_k).
\end{equation}
Also, $u^\lambda_{k,K_0}\rightarrow 0$ in $ H^1(\mathbb{R}^n)$ as $k\rightarrow \infty$ implies $\displaystyle o(1)= \phi_k(u^\lambda_{k,K_0}) =  \phi_k(u^\lambda_k)-\sum_{i=0}^{K_0-1}\phi_*(v^\lambda_i)+o(1)$. Thus,
$$ 0  =  \lim_{k\rightarrow \infty}(\phi_k(u^\lambda_k)-\sum_{i=0}^{K_0-1}\phi_*(v^\lambda_i))$$
which gives
$$ \lim_{k\rightarrow \infty}(I_\lambda(u^\lambda_k)-\Lambda^\lambda_k H(u^\lambda_k))  = \sum_{i=0}^{K_0-1}(I_\lambda(v^\lambda_i)-\Lambda^\lambda_0H(v^\lambda_i)),$$ and hence,
\begin{equation*}
	\lim_{k\rightarrow \infty}\left(\frac{1}{2}T(u^\lambda_k)-\Lambda^\lambda_kH(u^\lambda_k)-\frac{\mu}{2p}\mathcal{A}_p(u^\lambda_k)
	\right)
	= \sum_{i=0}^{K_0-1}\left(\frac{1}{2}T(v^\lambda_i)-\Lambda^\lambda_0H(v^\lambda_i)
	-\frac{\mu}{2p}\mathcal{A}_p(v^\lambda_1)
	\right).\\
\end{equation*}
Using \eqref{sumH(v_i)} and \eqref{sumT(v_i)}, we get
\begin{equation}\label{sumI_alpha}
	\lim_{k \rightarrow \infty}\mathcal{A}_p(u^\lambda_k)
	= \sum_{i=0}^{K_0-1}\mathcal{A}_p(v^\lambda_i)
\end{equation}
and since $\displaystyle T(u^\lambda_{k,K_0})-\left\|\nabla u^\lambda_{k,K_0}\right\|_2^2 =  T(u^\lambda_k)-\left\|\nabla u^\lambda_k\right\|_2^2-\sum_{i=0}^{K_0-1}(T(v^\lambda_i)-\left\|\nabla v^\lambda_i\right\|_2^2)+o(1),$  we have
\begin{equation}\label{sum_C(n,s)}
	\lim_{k \rightarrow \infty}\lambda [u^\lambda_k]^2=\sum_{i=0}^{K_0-1}\lambda [v^\lambda_i]^2.\\
\end{equation}
Hence by \eqref{sumT(v_i)} and \eqref{sum_C(n,s)},
$\displaystyle\lim_{k \rightarrow \infty}\left\|\nabla u^\lambda_k\right\|_2^2 = \sum_{i=0}^{K_0-1}\left\|\nabla v^\lambda_i\right\|_2^2.$
By \eqref{2.15}, \eqref{sumH(v_i)}, \eqref{sumI_alpha},\eqref{sumT(v_i)} and \eqref{sum_C(n,s)}, we get
\begin{eqnarray*}
	0 & = & \left(\frac{n-2s}{2}\right)\lambda \sum_{i=0}^{K_0-1}[v^\lambda_i]^2 + \left(\frac{n-2}{2}\right)\sum_{i=0}^{K_0-1}\left\|\nabla v^\lambda_i\right\|_2^2 -n\Lambda^\lambda_0\sum_{i=0}^{K_0-1}\left\|v^\lambda_i\right\|_2^2\\
	&&
	-\left(\frac{n+\alpha}{2p}\right)\mu\sum_{i=0}^{K_0-1}\mathcal{A}_p(v^\lambda_i)
	\\ 
	& = & \left(\frac{n-2s}{2}\right)\lambda \lim_{k \rightarrow \infty}[u^\lambda_k]^2 -n\Lambda^\lambda_0\lim_{k \rightarrow \infty}\left\|u^\lambda_k\right\|_2^2+ \left(\frac{n-2}{2}\right)\lim_{k \rightarrow \infty}\left\|\nabla u^\lambda_k\right\|_2^2 \\
	&&	 -\left(\frac{n+\alpha}{2p}\right)\mu\lim_{k \rightarrow \infty}\mathcal{A}_p(u^\lambda_k).
\end{eqnarray*}
Therefore, the "asymptotic" Pohozaev type identity follows:
\begin{equation*}
	\mu \left(\frac{n+\alpha}{2p}\right)\mathcal{A}_p(u^\lambda_k)
	+n\Lambda^\lambda_0\tau
	= \frac{(n-2s)}{2}\lambda[u^\lambda_k]^2
	+\left(\frac{n-2}{2}\right)\left\|\nabla u^\lambda_k\right\|_2^2
	+o(1)
\end{equation*}
which ends the proof of claim 2.
By \eqref{phi_n'}, we have
\begin{equation}\label{2.22}
	T(u^\lambda_k)=\mu \mathcal{A}_p(u^\lambda_k) 
	+2\tau\Lambda^\lambda_k +o(1),
\end{equation}
equating the value of $\left\|\nabla u^\lambda_k\right\|_2^2$ in above two expressions, we get
\begin{equation}\label{2.23}
	\frac{\mu}{2p}\mathcal{A}_p(u^\lambda_k)
	= \frac{(1-s)}{(n+\alpha-p(n-2))}\lambda[u^\lambda_k]^2 
	+\tau\left(\frac{(n-2)\Lambda^\lambda_k-n\Lambda^\lambda_0}{n+\alpha-p(n-2)}\right) 
	+o(1).
\end{equation}
Using \eqref{2.23} in \eqref{2.22}, we obtain
\begin{equation}\label{2.24}
	T(u^\lambda_k)  =  \frac{2p(1-s)}{n+\alpha-p(n-2)}\lambda[u^\lambda_k]^2
	+\frac{2\tau(-np\Lambda^\lambda_0+(n+\alpha)\Lambda^\lambda_k)}{n+\alpha-p(n-2)}.
\end{equation}
By \eqref{sum_C(n,s)}, \eqref{2.23} and \eqref{2.24}, we can conclude that
\begin{eqnarray}\label{-m}
	-m_\lambda & = & \lim_{k \rightarrow \infty} I_\lambda(u^\lambda_k) =\lim_{k \rightarrow \infty}\left(\frac{T(u^\lambda_k)}{2}-\frac{\mu}{2p}\mathcal{A}_p(u^\lambda_k)
	\right)\nonumber\\
	& = & \lim_{k \rightarrow \infty}\left(\frac{p(1-s) }{(n+\alpha-p(n-2))}\lambda[u^\lambda_k]^2\right.
	+\frac{\tau(-np\Lambda^\lambda_0+(n+\alpha)\Lambda^\lambda_k)}{n+\alpha-p(n-2)}\nonumber \\
	&&-\frac{(1-s) }{(n+\alpha-p(n-2))}\lambda[u^\lambda_k]^2\left.-\tau\left(\frac{(n-2)\Lambda^\lambda_k-n\Lambda^\lambda_0}{n+\alpha-p(n-2)}\right)+o(1)\right)\nonumber\\
	& = & \frac{\tau \Lambda^\lambda_0(np-n-\alpha-2)}{p(n-2)-n-\alpha}+\lim_{k \rightarrow \infty}\frac{(1-s) (p-1)}{(n+\alpha-p(n-2))}\lambda[u^\lambda_k]^2\nonumber\\
	& = & \frac{\tau \Lambda^\lambda_0(np-n-\alpha-2)}{p(n-2)-n-\alpha}+\sum_{i=0}^{K_0-1}\frac{(1-s) (p-1)}{(n+\alpha-p(n-2))}\lambda[v^\lambda_i]^2\nonumber\\
	& \geq & \frac{\tau \Lambda^\lambda_0(np-n-\alpha-2)}{p(n-2)-n-\alpha}
	+\frac{(1-s)(p-1)}{(n+\alpha-p(n-2))}\lambda [v^\lambda _i]^2 \text{ for every } i.
\end{eqnarray}
Claim 3: $H(v^\lambda_i) \geq \tau $ for all $i$.
Suppose that there exists $i$ such that $H(v^\lambda_i)=\gamma<\tau$, then, as $v^\lambda_i$ is a critical point of $\phi_*$ and  by \eqref{2.15}, $v^\lambda_i$ satisfies the following:\\
$H(v^\lambda_i)  =  \gamma\;;\;T(v^\lambda_i) =  \mu \mathcal{A}_p(v^\lambda_i)
+2\gamma\Lambda^\lambda_0,$
\begin{eqnarray*}
	\left(\frac{n+\alpha}{2p}\right)\mu \mathcal{A}_p(v^\lambda_i)
	&=& \left(\frac{n-2}{2}\right)\left\|\nabla v^\lambda_i\right\|_2^2-n\Lambda^\lambda_0\gamma  
	+\left(\frac{n-2s}{2}\right)\lambda [v^\lambda_i]^2.
\end{eqnarray*}
This gives us
$$\frac{\mu}{2p}\mathcal{A}_p(v^\lambda_i)
=  \frac{(1-s)}{(n+\alpha-p(n-2))}\lambda[v^\lambda_i]^2   +\frac{2\Lambda^\lambda_0\gamma}{p(n-2)-n-\alpha}.$$
Therefore,
\begin{equation*}
	I_\lambda(v^\lambda_i) 
	=  \frac{\mu(p-1)}{2p}\mathcal{A}_p(v^\lambda_i)
	+\gamma\Lambda^\lambda_0
	=  \frac{(n+\alpha+2-np)\gamma\Lambda^\lambda_0}{n+\alpha-p(n-2)}+\frac{(1-s)(p-1) }{(n+\alpha-p(n-2))}\lambda[v^\lambda_i]^2.
\end{equation*}
Now, let $\sigma= \tau/\gamma>1$, $$w(x)=\frac{v^\lambda_i(x/\sigma^f)}{\sigma^e}$$
where $e=\frac{2+\alpha}{2(n(p-1)-2-\alpha)}\text{ and }\;f=\frac{p-1}{n(p-1)-2-\alpha},$
then $H(w)=\tau$. Thus, $w\in \Gamma$ and hence by \eqref{-m},
\begin{equation}
	I_\lambda(w)\geq-m_\lambda\geq \frac{\tau \Lambda^\lambda_0(np-n-\alpha-2)}{p(n-2)-n-\alpha}+\frac{(1-s) (p-1)}{(n+\alpha-p(n-2))}\lambda[v^\lambda_i]^2.
\end{equation}
Also,
\begin{eqnarray*}
	I_\lambda(w) & = & \frac{\sigma^{nf-2e-2f}}{2}\left\|\nabla v^\lambda_i\right\|_2^2+\frac{\sigma^{nf-2e-2sf}}{2}\lambda [v^\lambda _i]^2
	-\frac{\mu \sigma^{nf-2ep+f\alpha}}{2p}\mathcal{A}_p(v^\lambda_i)
	\\
	& \leq & \sigma^{nf-2e-2f}I_\lambda (v^\lambda _i)= \sigma^{1-2f}I_\lambda (v^\lambda _i)\\
	& = &  \sigma^{1-2f}\left(\frac{(n+\alpha+2-np)\gamma\Lambda^\lambda_0}{n+\alpha-p(n-2)}+\frac{(1-s)(p-1) }{(n+\alpha-p(n-2))}\lambda [v^\lambda _i]^2\right)\\
	& = & \frac{(n+\alpha+2-np)\Lambda^\lambda_0\tau\sigma^{-2f}}{n+\alpha-p(n-2)}+\frac{\sigma^{1-2f}(1-s)(p-1)}{(n+\alpha-p(n-2))}\lambda [v^\lambda _i]^2.
\end{eqnarray*}
Therefore,
\begin{eqnarray*}
	&&\frac{(n+\alpha+2-np)\Lambda^\lambda_0\tau\sigma^{-2f}}{n+\alpha-p(n-2)}+\frac{\sigma^{1-2f}(1-s)(p-1)}{(n+\alpha-p(n-2))}\lambda [v^\lambda _i]^2\\
	&&\geq I_\lambda(w) \geq -m_\lambda \\
	&& \geq \frac{\tau \Lambda^\lambda_0(np-n-\alpha-2)}{p(n-2)-n-\alpha}+\frac{(1-s) (p-1)}{(n+\alpha-p(n-2))}\lambda [v^\lambda _i]^2,\;\;\forall \lambda>0.
\end{eqnarray*}
Thus, we get
\begin{eqnarray*}
	\frac{(1-s)(\sigma^{1-2f}-1)(p-1)}{(n+\alpha-p(n-2))}\lambda[v^\lambda_i]^2
	& \geq & \frac{\tau \Lambda^\lambda_0(np-n-\alpha-2)}{p(n-2)-n-\alpha}\\ &&-\frac{(n+\alpha+2-np)\Lambda^\lambda_0\tau\sigma^{-2f}}{n+\alpha-p(n-2)},\;\;\forall \lambda>0
\end{eqnarray*}
which implies:
\begin{equation}\label{main}
	\frac{(1-s)(p-1)(\sigma^{1-2f}-1)}{\Lambda^\lambda_0(n+\alpha+2-np)}\lambda [v^{\lambda}_i]^2\leq \tau (1-\sigma^{-2f}),\text{ for all }\lambda>0.
\end{equation}	
 Subclaim 1: There exists $B>0$ such that $[v_i^{\lambda}]^2\leq B$ for any $\lambda\geq 0$.\\
	Since $H^1(\mathbb{R}^n)\hookrightarrow H^s(\mathbb{R}^n)$, there exists a constant $C>0$ such that
	\begin{equation*}
		[v_i^{\lambda}]^2  \leq  C\left(\left\| \nabla v_i^{\lambda}\right\|_2^2+\left\| v_i^{\lambda}\right\|_2^2\right)\leq C\left(\sum_{j=0}^{K_0-1}\left\| \nabla v_j^{\lambda}\right\|_2^2+\tau\right)=C\left(\lim_{k\rightarrow \infty}\left\| \nabla u_k^{\lambda}\right\|_2^2+\tau\right),\\
	\end{equation*}
	now by \eqref{2.3} we have:
	$$\left\| \nabla u \right\|_2^2\leq \frac{4}{n+\alpha+2-np}\left(I_{\lambda}(u)+\frac{a^{\kappa}}{\kappa}\right)\text{ for all }u\in \Gamma \text{ and }\lambda\geq 0,$$
	this gives us
	\begin{eqnarray*}
		[v_1^{\lambda}]^2 & \leq & C\left(\lim_{k\rightarrow \infty}\left(\frac{4}{n+\alpha+2-np}\left(I_{\lambda}(u_k^{\lambda})+\frac{a^{\kappa}}{\kappa}\right)\right)+\tau\right)\\
		& = & C\left(\frac{4}{n+\alpha+2-np}\left(-m_{\lambda}+\frac{a^{\kappa}}{\kappa}\right)+\tau\right)\\
		& \leq & C\tau +\frac{4Ca^{\kappa}}{\kappa(n+\alpha+2-np)} =B \text{ for all } \lambda\geq 0,
	\end{eqnarray*}
	since by Lemma 3.1 and Remark \ref{remarklambda=0}, $-m_{\lambda}:=\inf\{I_{\lambda}(v): v\in \Gamma\}<0$ for any $\lambda\geq 0$. \\
	Subclaim 2:  for $\lambda>0$ small, $\Lambda^\lambda_0\leq -C$ for some $C>0$.\\
	Since $\displaystyle \Lambda^\lambda_0(\lambda)=\lim_{k\rightarrow \infty}\Lambda^\lambda_k$, it suffices to show that there exists $\tilde{C}>0$ such that
	\begin{equation}\label{Claim 2}
		|\Lambda^\lambda_k|\geq \tilde{C}>0 \text{ for sufficiently small }\lambda \text{ and large }k.
	\end{equation}
	Suppose that \eqref{Claim 2} does not hold. Then, there exists 
	a subsequence $\{\Lambda_j\}=\{\Lambda^{\lambda_j}_{k_j}\}$ such that $\lambda_j\rightarrow 0^+$, $k_j\rightarrow \infty$ and $\{\Lambda_j\}\rightarrow 0^-$ as $j\rightarrow\infty$. By sake of simplicity, let us denote $u_j^{\lambda_j}$ by $u_j$. As done in step 1 of the proof together with the fact that $\lambda\mapsto m_\lambda$ is nondecreasing (which implies that the bound $M=2|m_{\lambda_j}|$ is uniformly bounded with respect to $j$), one can see that the sequence $\{u_j\}$ is uniformly bounded in $H^1(\mathbb{R}^n)$. Then $\displaystyle \lim_{j\rightarrow \infty}I'_{\lambda_j}(u_{j})=\lim_{j\rightarrow \infty}\Lambda_jH'(u_j)=0$ which implies:
	$$\left\|\nabla u_j \right\|_2^2+\lambda_j[u_j]^2=\mu\mathcal{A}_p(u_j)+o(1)$$
	and hence, 
	\begin{eqnarray*}
		\liminf_{j\to\infty}(-m_{\lambda_j}) & = & \liminf_{j\rightarrow \infty}I_{\lambda_j}(u_j) =\frac{(p-1)}{2p}\liminf_{j\rightarrow \infty}\left(\left\|\nabla u_j \right\|_2^2+\lambda_j[u_j]^2\right)\geq 0.
	\end{eqnarray*}
	But since $\lambda_j$ approaches 0 as $j\rightarrow\infty$ we can find $\tilde{\lambda}>\lambda_j$ for all $j$ and by monotonicity of $\lambda\mapsto m_\lambda$ together with Lemma \ref{lemma3.1} one has
	$$\liminf_{j\to\infty}(-m_{\lambda_j})\leq-m_{\tilde{\lambda}}<0.$$
	Therefore, we get a contradiction and Subclaim 2 holds. Now, taking $\lambda$ as small as possible, inequality \eqref{main} yields
	$$0\leq \tau(1-\sigma^{-2f})$$
	and hence $1\geq \sigma^{-2f}$, but since $\sigma>1$ and $f<0$, we get a contradiction,
thus, $H(v^\lambda _i)\geq \tau$, for every $i$. Also, by \eqref{sumH(v_i)}, $\displaystyle \sum_{i=0}^{K_0-1} H(v^\lambda _i)=\tau$.
Now if $v^\lambda _0\neq 0 $, then $H(v_0)=\tau$, and we are done as we circle back to the subcase 1.
Let $v^\lambda _0=0$, then we must have
$v^\lambda _1\neq 0$. Thus
$H(v^\lambda _1)=\tau \Rightarrow v^\lambda _1\in \Gamma$ and $v^\lambda _j=0$ for all $ j\geq 2$. This implies that the iteration stopped at $K_0=2$ which means $u^\lambda_{k,2}\rightarrow 0$ as $k\rightarrow \infty$ and by \eqref{phi*(v_1)}, $\phi_*(v^\lambda _1)=-m_\lambda-\Lambda^\lambda_0\tau$, thus $I_\lambda (v^\lambda _1)=-m_\lambda $. Hence, we are done.
\end{proof}

\begin{lemma}
Assume that $\frac{n+\alpha}{n}<p<\frac{2s+n+\alpha}{n}$, and $\lambda>0$ is sufficiently small. Then the minimization problem \eqref{minS} has at least a solution $u$. Moreover,
the solution of \eqref{minS} will solve \eqref{prob}.
\end{lemma}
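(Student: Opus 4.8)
The plan is to deduce both assertions directly from Lemma~\ref{lemma3.2}, the whole analytic difficulty having already been absorbed there. The first step is the elementary observation that on the mass constraint $\Gamma=\{u\in H^1(\mathbb{R}^n):H(u)=\tau\}$ the two functionals differ only by an additive constant:
\[
S_\lambda(u)=I_\lambda(u)+\tfrac12\|u\|_2^2=I_\lambda(u)+\tfrac{\tau}{2}\qquad\text{for every }u\in\Gamma .
\]
Hence $\min_{\Gamma}S_\lambda=\min_{\Gamma}I_\lambda+\tfrac{\tau}{2}$ and, more importantly, the minimizers of $S_\lambda$ on $\Gamma$ are \emph{exactly} the minimizers of $I_\lambda$ on $\Gamma$; that is, \eqref{minS} and \eqref{minI} are the same minimization problem up to a harmless shift. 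Since $\frac{n+\alpha}{n}<p<\frac{2s+n+\alpha}{n}$ and $\lambda>0$ is small, Lemma~\ref{lemma3.2} provides a minimizer $v^\lambda_0\in\Gamma$ of $I_\lambda$; the same $v^\lambda_0$ then solves \eqref{minS}, which settles the existence part.

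For the second assertion I would read off the Euler--Lagrange equation already produced in the proof of Lemma~\ref{lemma3.2}. There the minimizer $v^\lambda_0$ is shown to be a critical point of $\phi_*(u)=I_\lambda(u)-\Lambda^\lambda_0 H(u)$, so that, testing against arbitrary $w\in H^1(\mathbb{R}^n)$, it solves weakly
\[
\mathcal{L}v^\lambda_0=\mu\,(I_\alpha*|v^\lambda_0|^p)|v^\lambda_0|^{p-2}v^\lambda_0+2\Lambda^\lambda_0\,v^\lambda_0\quad\text{in }\mathbb{R}^n .
\]
The decisive point, also established in Lemma~\ref{lemma3.2} (Claim~2 and Subclaim~2, where the smallness of $\lambda$ enters), is that the limiting Lagrange multiplier satisfies $\Lambda^\lambda_0<0$, and in fact $\Lambda^\lambda_0\le -C<0$. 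Writing the equation as $\mathcal{L}v^\lambda_0+(-2\Lambda^\lambda_0)v^\lambda_0=\mu(I_\alpha*|v^\lambda_0|^p)|v^\lambda_0|^{p-2}v^\lambda_0$, the coefficient of the zeroth order term is strictly positive, so $v^\lambda_0$ together with the constraint $\|v^\lambda_0\|_2^2=\tau$ is precisely a normalized weak solution of \eqref{prob}, the prescribed frequency being $-2\Lambda^\lambda_0>0$. This shows that the solution of \eqref{minS} solves \eqref{prob}.

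The main obstacle is not in this packaging step but in the facts imported from Lemma~\ref{lemma3.2}: that a minimizing sequence stays compact (handled there by the concentration--compactness splitting and the \emph{asymptotic} Pohozaev identity) and, above all, that the multiplier does not degenerate to $0$ as $\lambda\to0^+$. The sign and non-degeneracy of $\Lambda^\lambda_0$ are exactly what guarantee the correct structure of the limiting equation; they cannot be recovered by the usual scaling normalization, because the mixed operator $-\Delta+\lambda(-\Delta)^s$ is not homogeneous (a dilation $u(\cdot)\mapsto u(B\cdot)$ acts by $B^2$ on $-\Delta$ but by $B^{2s}$ on $(-\Delta)^s$, forcing $B=1$). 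This is precisely why one must rely on the direct constrained minimization of Lemma~\ref{lemma3.2} rather than on a rescaling argument available in the purely local case.
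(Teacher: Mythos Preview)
Your first step---the equivalence of \eqref{minS} and \eqref{minI} via $S_\lambda=I_\lambda+\tfrac{\tau}{2}$ on $\Gamma$---is exactly what the paper does, so existence of a minimizer is fine.

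The second assertion, however, is not proved. In \eqref{prob} the zeroth-order coefficient is \emph{fixed equal to $1$}: a weak solution must satisfy $\mathcal{L}u+u=\mu(I_\alpha*|u|^p)|u|^{p-2}u$ together with $\|u\|_2^2=\tau$ (see Definition~1.1). What Lemma~\ref{lemma3.2} gives is only that the minimizer solves $\mathcal{L}u+(-2\Lambda^\lambda_0)u=\mu(I_\alpha*|u|^p)|u|^{p-2}u$ with some $-2\Lambda^\lambda_0>0$; there is no reason a priori that $-2\Lambda^\lambda_0=1$. Your sentence ``the prescribed frequency being $-2\Lambda^\lambda_0>0$'' treats the frequency as a free unknown, but in this paper it is not. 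Ironically, you even point out that the mixed operator is inhomogeneous under dilations, which is precisely why one \emph{cannot} rescale the multiplier to $1$ and why this identification is nontrivial.

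The paper closes this gap by a separate argument: writing the Euler--Lagrange equation as $\mathcal{L}u+\delta u=\mu(I_\alpha*|u|^p)|u|^{p-2}u$ for some $\delta>0$, substituting $u(x)=\delta^{\frac{2+\alpha}{4(p-1)}}v(\delta^{1/2}x)$, and then comparing $S_\lambda(u)$ with $S_\lambda(v)$ through Pohozaev-type identities. This leads to an algebraic relation $-A\delta^B+B\delta^A=\tfrac{S_\lambda(u)-\lambda K_\delta}{S_\lambda(v)}$, and a case analysis on the auxiliary function $f(t)=-At^B+Bt^A-\text{const}$ (monotone on $(0,1)$ and $(1,\infty)$, maximized at $t=1$) forces $\delta=1$; the cases $\delta\neq 1$ are ruled out using the smallness of $\lambda$ and the inequality coming from \eqref{G-N}. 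Without this step (or an equivalent one), the claim that the minimizer solves \eqref{prob} does not follow.
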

\begin{proof}
Claim 1: \eqref{minI} and \eqref{minS} are equivalent.
Since for $u\in \Gamma \;S_\lambda(u)=I_\lambda(u)+\frac{\tau}{2}$, then,
clearly, if $u\in \Gamma$ is a solution of \eqref{minI},  it must be the solution of \eqref{minS} as well and vice versa. Thus \eqref{minI} and \eqref{minS} are equivalent. Hence by Lemma 3.2, the problem \eqref{minS} has a solution $u$.
Now, let $u\in\Gamma$ be a solution of \eqref{minS}.
Claim 2: $u$ is a solution of \eqref{prob}.\\
Define $h:=\inf\{S_\lambda(v):v\in \Gamma\}.$
Let
$u_{K}(x)=K^{\frac{n}{2}}u(Kx),$
then, clearly $u_K\in \Gamma$. Since $u_1=u$ is a solution of \eqref{minS}, then
$\frac{d}{dK}S_\lambda(u_K)|_{K=1}=0$ implying 
Pohozaev identity:
\begin{equation*}
	\frac{\mu}{2p}\mathcal{A}_p(u)
	=  \frac{\left\| \nabla u \right\|_2^2}{np-n-\alpha}
	+\frac{s\lambda [u]^2 }{(np-n-\alpha)}.
\end{equation*}
Also, by the method of Lagrange multipliers, there exists $\delta>0$ such that $u$ is a solution of
$$\mathcal{L}(u)+\delta u=\mu(I_{\alpha}*|u|^p)|u|^{p-2}u\;\;\text{ in }\mathbb{R}^n.$$
This gives us
\begin{eqnarray*}
	\delta\tau 
	& = & \left(\frac{n+\alpha-p(n-2)}{np-n-\alpha}\right)\left\|\nabla u\right\|_2^2
	+\lambda \left(\frac{n+\alpha-p(n-2s)}{np-n-\alpha}\right)[u]^2>0
\end{eqnarray*}
for $\frac{n+\alpha}{n}<p<\frac{2s+n+\alpha}{n}$,
thus $\delta>0$.
Now define $v$, such that  for any $x\in\mathbb{R}^n$
$u(x)=\delta^{\frac{2+\alpha}{4(p-1)}}v(\delta^{\frac{1}{2}}x).$
Then, we can deduce that $v$ is a solution of 
\begin{equation}\label{Prob_v}
	-\Delta v+\delta^{s-1}\lambda(-\Delta)^sv+v= \mu (I_{\alpha}*|v|^{p})|v|^{p-2}v\;\;\text{ in }\mathbb{R}^n.
\end{equation} 
This gives 
$$\mu\mathcal{A}_p(v)
= \left\|\nabla v\right\|_2^2+\delta^{s-1}\lambda [v]^2+\left\|v\right\|_2^2$$
and if $F$ is the variational functional corresponding to \eqref{Prob_v}, then $\frac{d}{dK}F(v(x/K))|_{K=1}  =  0$. Thus, again Pohozaev identity holds:
\begin{equation*}
	\mu \mathcal{A}_p(v)
	= \frac{p(n-2)}{n+\alpha}\left\|\nabla v\right\|_2^2+\frac{np}{n+\alpha}\left\|v\right\|_2^2
	+\frac{\delta^{s-1}(n-2s)p}{(n+\alpha)}\lambda[v]^2.
\end{equation*}
Equating above equations, we get
\begin{eqnarray*}
	\left\|\nabla v\right\|_2^2 & = & \delta^{s-1}\lambda \left(\frac{(n-2s)p-n-\alpha}{(n+\alpha-p(n-2))}\right)[v]^2
	+ \left(\frac{np-n-\alpha}{n+\alpha-p(n-2)}\right)\left\|v\right\|_2^2.
\end{eqnarray*}
Taking $A=\frac{2+\alpha+n-np}{2(p-1)},\;B=\frac{n+\alpha-p(n-2)}{2(p-1)} \text{ and } C=\frac{2+\alpha-(n-2s)(p-1)}{2(p-1)},$
we get
\begin{eqnarray*}
	S_\lambda(v) & = & \frac{1}{2}\left( \delta^{s-1}\lambda \left(\frac{(n-2s)p-n-\alpha}{(n+\alpha-p(n-2))}\right)[v]^2
	+ \left(\frac{np-n-\alpha}{n+\alpha-p(n-2)}\right)\left\|v\right\|_2^2\right)\\
	&&+\frac{\lambda }{2}[v]^2
	+\frac{\left\| v \right\|_2^2}{2}-\frac{1}{2p}\left(\left\|\nabla v\right\|_2^2+\delta^{s-1}\lambda[v]^2
	+ \left\|v\right\|_2^2\right)\\
	& = & \frac{\lambda }{2}\left(\delta^{s-1}\left(\frac{(n-2s)p-n-\alpha}{(n+\alpha-p(n-2))}\right)+1-\frac{\delta^{s-1}}{p}\right)[v]^2\\
	&& +\frac{1}{2}\left(\frac{np-n-\alpha}{n+\alpha-p(n-2)}-\frac{1}{p}+1\right)\left\|v\right\|_2^2      
	-\frac{1}{2p}\left( \left(\frac{np-n-\alpha}{n+\alpha-p(n-2)}\right)\left\|v\right\|_2^2\right.\\
	&& \left. +\delta^{s-1}\lambda \left(\frac{(n-2s)p-n-\alpha}{(n+\alpha-p(n-2))}\right)[v]^2\right)\\
	& = &  \left(1-\frac{\delta^{s-1}C}{B}\right)\frac{\lambda }{2}[v]^2+\frac{p-1}{n+\alpha-p(n-2)}\left\|v\right\|_2^2.
\end{eqnarray*}
Also, we can deduce that 
\begin{equation}
	\left\|\nabla u\right\|_2^2  =  \delta^B\left\|\nabla v\right\|_2^2\; , \;	\tau = \left\|u\right\|_2^2  = \delta^{A}\left\|v\right\|_2^2,\;\;	[u]^2=\delta^C[v]^2\;\mbox{and}\;		
	\mathcal{A}_p(u)	=  \delta^{B}\mathcal{A}_p(v)
	.
\end{equation}
Therefore,
\begin{equation}\label{S(v)}
	S_\lambda (v)=\frac{\tau\delta^{-A}}{2B}+\left(1-\frac{\delta^{s-1}C}{B}\right)\frac{\lambda }{2}[v]^2.
\end{equation}
Now,
\begin{eqnarray*}
	S_\lambda(u) & = & \frac{\delta^B}{2}\left\|\nabla v\right\|_2^2+\frac{\delta^A}{2}\left\|v\right\|_2^2+\frac{\delta^C }{2} \lambda[v]^2
	-\frac{\delta^{B}\mu}{2p}\mathcal{A}_p(v)
	\\
	& = & \delta^B S(v)+\frac{(\delta^A-\delta^B)}{2}\left\|v\right\|_2^2 +\frac{(\delta^C-\delta^B) }{2} \lambda[v]^2\\
	&=& \delta^B S(v)+\frac{\delta^A(1-\delta)}{2}\delta^{-A}\tau+\frac{(\delta^C-\delta^B)\lambda}{2}[v]^2.
\end{eqnarray*}
Using \eqref{S(v)}, we get
\begin{eqnarray*}
	S_\lambda(u) & = & \delta^B S(v)+\frac{(1-\delta)}{2}\left(\lambda (C\delta^{s-1+A}-B\delta^A)[v]^2\right.
	\left.+2BS(v)\delta^A\right)
	+\frac{(\delta^C-\delta^B)\lambda }{2}[v]^2 \\
	& = & (-A\delta^B+B\delta^A)S(v)+\lambda K_\delta,
\end{eqnarray*}
where
$K_\delta=\frac{1}{2}((\delta^A-\delta^B)(C\delta^{s-1}-B)+(\delta^C-\delta^B))[v]^2.$
Thus, we have $$\frac{S(u)-\lambda K_\delta}{S(v)}=-A\delta^B+B\delta^A.$$
Case 1: 
$\frac{S_\lambda(u)-\lambda K_\delta}{S_\lambda(v)}=-A\delta^B+B\delta^A>1$.
For all $t>0$, let us define
$$f(t)=-At^{B}+Bt^A-\left(\frac{S_\lambda(u)-\lambda K_\delta}{S_\lambda(v)}\right),$$
then $f(\delta)=0,$
$f(1)=-A+B-\left(\frac{S_\lambda(u)-\lambda K_\delta}{S_\lambda(v)}\right)=1-\left(\frac{S_\lambda(u)-\lambda K_\delta}{S_\lambda(v)}\right)<0,$
and $f'(t)=-ABt^{B-1}+ABt^{A-1}=ABt^{A-1}(1-t).$
Thus, $f$ is increasing in $(0,1)$, decreasing in $(1,\infty)$ and attains its maximum at $t=1$. Since $f(1)<0$, this implies that there does not exist any $\delta>0$ such that $f(\delta)=0$.\\
Case 2:   $0\leq\frac{S_\lambda(u)-\lambda K_\delta}{S_\lambda(v)}=-A\delta^B+B\delta^A\leq1$.
Again, we will use the same function $f$. In this case
$f(0)=-\left(\frac{S_\lambda(u)-\lambda K_\delta}{S_\lambda(v)}\right)<0,
f(1)=-A+B-\left(\frac{S_\lambda(u)-\lambda K_\delta}{S_\lambda(v)}\right)=1-\left(\frac{S_\lambda(u)-\lambda K_\delta}{S_\lambda(v)}\right)\geq 0.$
If $f(1)>0$, then the two distinct roots of $f$ will give two distinct values of $\delta$. But since
$\tau=\delta^{A}\left\|v\right\|_2^2$ is unique, we get $f(1)=0$. Thus $\delta=1$.\\
Case 3: $\frac{S_\lambda(u)-\lambda K_\delta}{S_\lambda(v)}=-A\delta^B+B\delta^A<0$.
In this case, $f$ will have an unique root $\delta>1$. Then since $v$ is a solution of \eqref{Prob_v}, we get
\begin{equation*}
	S_\lambda(v)  >  \frac{\left\|\nabla v\right\|_2^2}{2} +\frac{\left\|v\right\|_2^2}{2}+\frac{\delta^{s-1}}{2}\lambda[v]^2
	-\frac{\mu}{2p}\mathcal{A}_p(v)
	=  \frac{\mu(p-1)}{2p} \mathcal{A}_p(v)
	>0.
\end{equation*}
Thus, $S_\lambda(v)>0$ and hence $S_\lambda(u)-\lambda K_{\delta}<0$. Now, taking $C_{\delta}=((\delta^A-\delta^B)(C\delta^{s-1}-B)+(\delta^C-\delta^B))\delta^{-C}$
we get
\begin{eqnarray*}
	0&>&  S_\lambda(u)-\lambda K_{\delta}\\
	& = & \frac{\left\|\nabla u\right\|_2^2}{2}+\frac{\tau}{2}-\frac{\mu}{2p}\mathcal{A}_p(u)
	+\frac{\lambda [u]^2 }{2}
	-\frac{\lambda }{2}((\delta^A-\delta^B)(C\delta^{s-1}-B)+(\delta^C-\delta^B))[v]^2\\
	& = & \frac{\left\|\nabla u\right\|_2^2}{2}+\frac{\tau}{2}-\frac{\mu}{2p}\mathcal{A}_p(u)
	+\frac{\lambda }{2}(1-C_{\delta})[u]^2
	>  \frac{\left\|\nabla u\right\|_2^2}{2}+\frac{\tau}{2}-\frac{\mu}{2p}\mathcal{A}_p(u)
	-C_{\delta}\frac{\lambda }{2}[u]^2,
\end{eqnarray*}
that is,
\begin{eqnarray*}
	C_{\delta}\frac{\lambda[u]^2}{2} & > & \frac{\left\|\nabla u\right\|_2^2}{2}+\frac{\tau}{2}-\frac{\mu}{2p}\mathcal{A}_p(u)
	.
\end{eqnarray*}
Using \eqref{G-N} and the Young's inequality for $\frac{n+\alpha}{n}<p<\frac{2s+n+\alpha}{n}<\frac{2+n+\alpha}{n}$, we deduce that
\begin{eqnarray*}
	C_{\delta}\frac{\lambda [u]^2}{2}
	& > & \frac{\left\|\nabla u\right\|_2^2}{2}+\frac{\tau}{2}-C_{n,p}\frac{\mu}{2p}\left(\left\|\nabla u\right\|_2^2\right)^{\frac{np-n-\alpha}{2}}(\tau)^{\frac{n+\alpha-p(n-2)}{2}}\\
	& \geq & \frac{\left\|\nabla u\right\|_2^2}{2}-\left(\frac{np-n-\alpha}{2}\right)\left(\frac{1}{2}\left(\left\|\nabla u\right\|_2^2\right)^{\frac{np-n-\alpha}{2}}\right)^{\frac{2}{np-n-\alpha}}\\
	& & -\left(\frac{n+\alpha+2-np}{2}\right)\left(\frac{C_{n,p}\mu}{p}(\tau)^{\frac{n+\alpha-p(n-2)}{2}}\right)^{\frac{2}{n+\alpha+2-np}}+\frac{\tau}{2}\\
	& \geq & \left(\frac{1}{2}-\frac{np-n-\alpha}{4}\right)\left\|\nabla u\right\|_2^2\\
	&& +\frac{\tau}{2}\left(1-(n+\alpha+2-np)\left(\frac{C_{n,p}\mu}{p}\right)^{\frac{2}{n+\alpha+2-np}}(\tau)^{\frac{2(p-1)}{n+\alpha+2-np}}\right)\\
	& \geq & \frac{\tau}{2}\left(1-(n+\alpha+2-np)\left(\frac{C_{n,p}\mu}{p}\right)^{\frac{2}{n+\alpha+2-np}}(\tau)^{\frac{2(p-1)}{n+\alpha+2-np}}\right).
\end{eqnarray*}
Let $G=(n+\alpha+2-np)\left(\frac{C_{n,p}\mu}{p}\right)^{\frac{2}{n+\alpha+2-np}}(\tau)^{\frac{2(p-1)}{n+\alpha+2-np}}$.
Then, we have $$\frac{\tau}{2}(1-G)<C_{\delta}\frac{\lambda [u]^2}{2},\;\;\;\forall \lambda>0.$$
Since the left hand side is independent of the paramter $\lambda$ which can be as small as we want, we must have
$\frac{\tau}{2}(1-G)\leq0\;\;\forall\;\mu>0.$
Netherveless, for $$0<\mu<\mu^*=\frac{p(\tau)^{(1-p)}}{C_{n,p}(n+\alpha+2-np)^{\frac{n+\alpha+2-np}{2}}},$$
we get $G<1$, which gives a contradiction.
Hence, we are left with only one possiblity, $\delta=1$.
Therefore, $u=v$ is a solution of \eqref{prob}.
\end{proof}
\begin{myproof}{Theorem}{\ref{Theorem 1.3} and \ref{Theorem 1.4}}
By Lemma 3.2, we know that for $\frac{n+\alpha}{n}<p<\frac{2s+n+\alpha}{n}$, there exists a solution of \eqref{minI}. Since, by step 1 of Lemma 3.3, \eqref{minI} and \eqref{minS} are equivalent, \eqref{minS} has a solution and hence by Lemma 3.3, \eqref{prob} has a solution. Thus, we are done with Theorem 1.3.
As the minimizer of S on $\Gamma$ belongs to $A$, thus $A$ is nonempty.\\
Now, let $u$ be a solution of \eqref{minS}, and hence solves \eqref{prob}. Then as $u$ becomes a critical point of $S$, we get
\begin{equation}
	H(u)  =  \tau\;;\;T(u)+\tau  =  \mu\mathcal{A}_p(u)
\end{equation}
and
\begin{equation*}
	\mu\mathcal{A}_p(u)
	=  \lambda\left(\frac{n-2s}{2}\right)[u]^2+\left(\frac{n-2}{2}\right)\left\|\nabla u\right\|_2^2
	+\frac{n\tau}{2}.
\end{equation*}
From the above equations, we can deduce that
\begin{eqnarray*}
	\frac{\mu}{2p}\mathcal{A}_p(u)
	& = &\frac{\tau}{n+\alpha-p(n-2)}
	+\frac{\lambda(1-s)[u]^2}{(n+\alpha-p(n-2))}
\end{eqnarray*}
and
$$\frac{\mu}{2p}\mathcal{A}_p(u)
=\frac{\tau s}{n+\alpha-p(n-2s)}+\frac{(s-1)}{n+\alpha-p(n-2s)}\left\| \nabla u\right\|_2^2$$
which give
\begin{eqnarray*}
	S_\lambda(u) & = &  \frac{\mu(p-1)}{2p}\mathcal{A}_p(u)
	=  \frac{\tau(p-1)}{n+\alpha-p(n-2)}+\frac{\lambda(1-s)(p-1)[u]^2}{(n+\alpha-p(n-2))}
\end{eqnarray*}
implying that
\begin{eqnarray*}
	I_\lambda(u) & = &\frac{\tau(np-n-\alpha-2)}{2(n+\alpha-p(n-2))}+ \frac{\lambda(1-s)(p-1)[u]^2}{(n+\alpha-p(n-2))}
\end{eqnarray*}
and
$$S_\lambda(u)=\frac{\tau s(p-1)}{n+\alpha-p(n-2s)}+\frac{(s-1)(p-1)}{n+\alpha-p(n-2s)}\left\| \nabla u\right\|_2^2.$$
Now, let $w\in G$ be arbitrary, this implies $S_\lambda(w)=l$ and $w$ is a solution of 
$$\mathcal{L}(w)+w=\mu(I_{\alpha}*|w|^p)|w|^{p-2}w\;\;\text{ in }\;\mathbb{R}^n.$$
Then by similar calculations, as done above, we get
\begin{equation}
	S_\lambda (w) = \frac{s(p-1)\gamma}{n+\alpha-p(n-2s)}+\frac{(s-1)(p-1)}{n+\alpha-p(n-2s)}\left\|\nabla w\right\|_2^2,
\end{equation}
$$ S_\lambda (w)=\frac{\gamma(p-1)}{n+\alpha-p(n-2)}+\frac{\lambda (p-1)(1-s)[w]^2}{(n+\alpha-p(n-2))}$$
and 
\begin{equation}
	I_\lambda (w) =\frac{\gamma(np-n-\alpha-2)}{2(n+\alpha-p(n-2))}+ \frac{\lambda(1-s)(p-1)[w]^2 }{(n+\alpha-p(n-2))}.
\end{equation}
Now, $\gamma\geq \tau$. Indeed, if $\gamma< \tau$, then $\sigma=\tau/\gamma>1$. Defining $v$ such that
$w(x)=\sigma^av(\sigma^bx)$
where
$a=\frac{2+\alpha}{2(n(p-1)-2-\alpha)}\text{ and }b=\frac{p-1}{n(p-1)-2-\alpha},$
one has $H(v)=\tau$. Thus, $v\in \Gamma$. Since $u$ minimizes $I_\lambda $ on $\Gamma$, we have
\begin{eqnarray*}
	I_\lambda (u) & \leq & I_\lambda (v)=\frac{\sigma^{nb-2a-2b}}{2}\left\|\nabla w\right\|_2^2-\frac{\mu \sigma^{nb-2ap+b\alpha}}{2p}\mathcal{A}_p(w)
	+\frac{\sigma^{-2a-2bs+nb}}{2}\lambda [w]^2\\
	& \leq & \sigma^{1-2b}I(w)=\sigma^{1-2b}\left(\frac{\gamma(np-n-\alpha-2)}{2(n+\alpha-p(n-2))}
	+ \frac{\lambda(1-s)(p-1)[w]^2}{(n+\alpha-p(n-2))}\right).
\end{eqnarray*} 
Thus, we have
\begin{eqnarray*}
	&& \frac{(np-n-\alpha-2)\tau}{2(n+\alpha-p(n-2))}+\frac{\lambda (1-s)(p-1)[u]^2}{(n+\alpha-p(n-2))}\\
	&&=  I_\lambda (u)\leq I_\lambda (v)\\
	&&\leq  \sigma^{1-2b}\left(\frac{\gamma(np-n-\alpha-2)}{2(n+\alpha-p(n-2))}
	+ \frac{\lambda(1-s)(p-1)[w]^2}{(n+\alpha-p(n-2))}\right).      		
\end{eqnarray*}
Therefore,
\begin{eqnarray*}
	\frac{(np-n-\alpha-2)\tau}{2(n+\alpha-p(n-2))}-\sigma^{1-2b}\left(\frac{\gamma(np-n-\alpha-2)}{2(n+\alpha-p(n-2))}\right) &\leq & \lambda\left(\frac{(1-s)(p-1)}{(n+\alpha-p(n-2))}\right) \left( \sigma^{1-2b}[w]^2\right.\\ && \left.-[u]^2\right)\;\;\forall\;\lambda>0.
\end{eqnarray*}
As the left hand side of the above expression is independent of the parameter $\lambda$, we get
$$\frac{(np-n-\alpha-2)\tau}{2(n+\alpha-p(n-2))}\leq\sigma^{1-2b}\frac{\gamma(np-n-\alpha-2)}{2(n+\alpha-p(n-2))}=\sigma^{-2b}\frac{\tau(np-n-\alpha-2)}{2(n+\alpha-p(n-2))}.$$
Thus, for $\frac{n+\alpha}{n}<p<\frac{2s+n+\alpha}{n}$ and $s\in (0,1)$, we get
$\sigma^{-2b}\leq 1.$
But since $\sigma>1$ and $b<0$ for $\frac{n+\alpha}{n}<p<\frac{2s+n+\alpha}{n}$, then  $\sigma^{-2b}$ must be greater than 1. Hence, by contradiction, $\sigma \leq 1$ and consequently $\gamma\geq\tau.$
Now, if $\gamma=\tau$, then $w\in \Gamma$ and hence $S(w)\geq S(u)$. For $\gamma>\tau$ we have
\begin{eqnarray*}
	S_\lambda (w)-S_\lambda (u)
	& = &\frac{\gamma(p-1)}{n+\alpha-p(n-2)}+\frac{\lambda (p-1)(1-s)}{(n+\alpha-p(n-2))}[w]^2\\
	&& -\frac{\tau(p-1)}{n+\alpha-p(n-2)}-\frac{\lambda(1-s)(p-1)}{(n+\alpha-p(n-2))}[u]^2\\
	& = & \frac{(\gamma-\tau)(p-1)}{n+\alpha-p(n-2)}+\frac{\lambda(1-s)(p-1)}{(n+\alpha-p(n-2))}\left([w]^2\right. \left.-[u]^2\right),\;\;\;\forall \lambda>0.
\end{eqnarray*}
As $(\gamma-\tau)>0$, no matter how small is $\lambda$, we obtain $S(w)-S(u)\geq 0$ for $\frac{n+\alpha}{n}<p<\frac{2s+n+\alpha}{n}$. Therefore, $S_\lambda (w)\geq S_\lambda (u)$ which gives
$l\geq S_\lambda (u).$
Also, since $u\in A$, we get
$S_\lambda (u)\geq l.$
Thus, $S_\lambda (w)=l=S_\lambda (u)$ and hence $u\in G$. This implies that $G$ is non empty. \\
Conversely, let $w\in G$, then by above proof $H(w)=\gamma\geq \tau$ and $S_\lambda (w)=S_\lambda (u)$, where $u$ is a solution of \eqref{minS}.
Then, as both $u$ and $w$ are in $A$, we get
\begin{equation*}
	\frac{\mu(p-1)}{2p}\mathcal{A}_p(u)
	=S_\lambda (u)  = S_\lambda (w) =\frac{\mu(p-1)}{2p}\mathcal{A}_p(w)
	.
\end{equation*}
We claim that $\gamma=\tau$. 
Let if possible $\gamma>\tau$. 
Now, since $S_\lambda (u)=S_\lambda (w)$, then
$$0>\frac{s(p-1)(\tau-\gamma)}{n+\alpha-p(n-2s)}=\frac{(s-1)(p-1)}{n+\alpha-p(n-2s)}\left(\left\|\nabla w\right\|_2^2-\left\|\nabla u\right\|_2^2\right).$$
Thus, $\left\|\nabla w\right\|_2^2>\left\|\nabla u\right\|_2^2.$
For  sufficiently small $\lambda>0$, we then get
\begin{eqnarray*}
	S_\lambda (w)-S_\lambda (u) & = & \frac{1}{2}\left(\left\|\nabla w\right\|_2^2-\left\|\nabla u\right\|_2^2\right) +\lambda\left([w]^2\right.\left.-[u]^2\right)+\frac{(\gamma-\tau)}{2}\\
	& \geq &\frac{1}{2}\left(\left\|\nabla w\right\|_2^2dx-\left\|\nabla u\right\|_2^2\right)>0.
\end{eqnarray*}
Therefore, $S_\lambda (w)>S_\lambda (u)$ which is a contradiction to the fact that $S_\lambda (u)=S_\lambda (w)$. Consequently, $\gamma=\tau$ and hence $w\in \Gamma$. Thus, $w$ is a solution of \eqref{minS}.
Therefore, $u\in G$ if and only if $u$ solve \eqref{minS}.
Now, if $u\in G$ then u solves \eqref{minS} and hence by Lemma 3.3, $u$ is a solution of \eqref{prob}. Conversely, if $u$ solves \eqref{prob}, then $u\in \Gamma$ and it is a solution of $$\mathcal{L}(u)+u=\mu(I_{\alpha}*|u|^p)|u|^{p-2}u\;\;\text{ in }\;\mathbb{R}^n.$$ 
Now, if $w$ is a solution of \eqref{minS}, then as done above we get $S(w)=S(u)$ and since the solution of \eqref{minS} belongs to $G$, we get 
$$S_\lambda (u) =S_\lambda (w)=\min\{S_\lambda (u) : u\in A\}.$$
Since $u\in A$, we get $u\in G$. Therefore $u$ solve \eqref{prob} if and only if $u\in G$. 
Thus, $u$ solves \eqref{prob} if and only if it solves \eqref{minS}. Therefore, the problems \eqref{prob} and \eqref{minS} are equivalent and hence the ground state solutions coincide with normalized solutions. 
\end{myproof}
\begin{myproof}{Theorem}{\ref{Theorem 1.5}}
Let $u\in \Gamma$ be a solution of \eqref{prob}, then 
$$\left\|\nabla u\right\|_2^2+\left\|u\right\|_2^2+\lambda[u]^2=\mu\mathcal{A}_p(u)
$$
and
$$\left(\frac{n-2}{2}\right)\left\|\nabla u\right\|_2^2+\frac{n\tau}{2}+\frac{ (n-2s)}{2}\lambda[u]^2=\left(\frac{n+\alpha}{2p}\right)\mu \mathcal{A}_p(u)
.$$
Equating above equations, we get
\begin{eqnarray*}
	\left(\frac{n-2}{2}-\frac{n+\alpha}{2p}\right)\left\|\nabla u\right\|_2^2+\frac{\lambda }{2}\left(n-2s-\frac{n+\alpha}{p}\right)[u]^2& = &
	\left(-\frac{n}{2}+\frac{n+\alpha}{2p}\right)\tau
\end{eqnarray*}
which implies
$$-\left\|\nabla u\right\|_2^2-s\lambda[u]^2 = 0 \;\text{ for }p=\frac{n+\alpha}{n}, 
\text{ that is }
\left\|\nabla u\right\|_2^2 = -s\frac{\lambda [u]^2}{2}<0.$$
Therefore, by contradiction, there does not exist any solution of \eqref{prob} for $p=\frac{n+\alpha}{n}$.\\
Similarly, for $p=\frac{n+\alpha}{n-2}$, we get
$$-\tau=\left(\frac{n-2}{2}-\frac{n-2}{2}\right)\left\|\nabla u\right\|_2^2+\frac{\lambda [u]^2}{2}(n-2s-(n-2))=(1-s)[u]^2>0
.$$
Thus, by contradiction, we conclude also that there is no solution of \eqref{prob} for $p=\frac{n+\alpha}{n-2}$.
\end{myproof}
\section{Appendix}
In this section, we are proving a Pohozaev identity corresponding to our problem type. For that, we follow the ideas in the proof of \cite[Theorem~2.5]{Anthal2025Pohozaev} to handle the mixed operator case together with the proof in \cite[Proposition~3.1]{moroz2013groundstates} to deal with Choquard nonlinearities. Precisely, we state
\begin{theoremA}\label{Pohozaev_identity}
	If $u\in H^1(\mathbb{R}^n)$ is a weak solution of \eqref{prob}, then it satisfies 
	\begin{equation}\label{Pohozaev}
		\left(\frac{n-2}{2}\right)\left\| \nabla u \right\|_2^2+\left(\frac{n-2s}{2}\right)\lambda[u]^2+\frac{n}{2}\left\| u \right\|_2^2=\mu\left(\frac{n+\alpha}{2p}\right)\mathcal{A}_p(u).
	\end{equation}
\end{theoremA}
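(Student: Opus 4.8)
The plan is to exploit the dilation invariance structure of the energy $S_\lambda$. For $t>0$ set $u_t(x):=u(x/t)$; a change of variables shows that each quadratic piece scales with a fixed power of $t$, namely
\[
\|\nabla u_t\|_2^2 = t^{\,n-2}\|\nabla u\|_2^2,\qquad [u_t]^2 = t^{\,n-2s}[u]^2,\qquad \|u_t\|_2^2 = t^{\,n}\|u\|_2^2,
\]
while the Choquard term, thanks to the $(n-\alpha)$-homogeneity of the Riesz kernel, obeys $\mathcal{A}_p(u_t) = t^{\,n+\alpha}\mathcal{A}_p(u)$. Consequently
\[
S_\lambda(u_t) = \tfrac{t^{\,n-2}}{2}\|\nabla u\|_2^2 + \tfrac{\lambda t^{\,n-2s}}{2}[u]^2 + \tfrac{t^{\,n}}{2}\|u\|_2^2 - \tfrac{\mu t^{\,n+\alpha}}{2p}\mathcal{A}_p(u),
\]
and differentiating in $t$ at $t=1$ produces exactly the difference of the two sides of \eqref{Pohozaev}. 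Since $u$ solves \eqref{prob}, the weak formulation is precisely $S_\lambda'(u)=0$, so $u$ is a \emph{free} critical point and formally $\tfrac{d}{dt}S_\lambda(u_t)\big|_{t=1}=S_\lambda'(u)\big[-x\cdot\nabla u\big]=0$, which yields the identity after rearranging.

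The difficulty is that this is only formal: the generator $-x\cdot\nabla u$ of the dilation is not in general admissible in $H^1(\mathbb{R}^n)$ (it need not lie in $L^2$), and the scaling law for the fractional seminorm must be differentiated rigorously. To make the argument precise I would first invoke the regularity of Theorems \ref{thm 1.1} and \ref{thm1.2}, which give $u\in L^q(\mathbb{R}^n)$ over a range of $q$ together with $u\in W^{2,q}_{loc}(\mathbb{R}^n)$, so that $x\cdot\nabla u$ is locally well behaved and all integrals below converge. Then I would test the weak formulation of \eqref{prob} against the truncated field $v_R:=\eta_R\,(x\cdot\nabla u)$, where $\eta_R(x)=\eta(x/R)$ is a smooth cutoff equal to $1$ on $B_R$ and supported in $B_{2R}$, and pass to the limit $R\to\infty$.

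For the local terms this is the classical Pohozaev computation: integrating by parts in $\int\nabla u\cdot\nabla v_R$ and $\int u\,v_R$ and sending $R\to\infty$ recovers the coefficients $\tfrac{n-2}{2}$ and $\tfrac{n}{2}$, the cutoff errors vanishing because $u\in H^1$. For the Choquard term I would follow \cite[Proposition~3.1]{moroz2013groundstates}: writing $(I_\alpha*|u|^p)|u|^{p-2}u\,(x\cdot\nabla u)=\tfrac1p(I_\alpha*|u|^p)\,x\cdot\nabla|u|^p$, symmetrizing the resulting double integral in $x$ and $y$, and using the homogeneity of $|x-y|^{-(n-\alpha)}$ to extract the factor $\tfrac{n+\alpha}{2p}$. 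The nonlocal diffusion term $\lambda[u]^2$ is the delicate point: here I would use the difference-quotient approximation of \cite[Theorem~2.5]{Anthal2025Pohozaev} (in the spirit of \cite{ambrosio,jiaquanliu}), replacing $x\cdot\nabla u$ by incremental quotients and exploiting the translation and dilation behaviour of the Gagliardo bilinear form to produce the exponent $n-2s$ and hence the coefficient $\tfrac{n-2s}{2}$.

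The main obstacle is precisely this fractional term: because $[u]^2$ is a nonlocal double integral, testing against $x\cdot\nabla u$ cannot be localized by a single integration by parts, so one must control both the near-diagonal contribution (using the local $W^{2,q}$ regularity of $u$) and the far-field tail (using the global integrability of $u$ from Theorem \ref{thm 1.1}) \emph{uniformly} in the cutoff scale $R$. Once these remainder estimates are shown to vanish as $R\to\infty$, the limit reproduces $\tfrac{n-2s}{2}\lambda[u]^2$ and the stated identity \eqref{Pohozaev} follows.
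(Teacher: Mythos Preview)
Your proposal is correct and follows essentially the same route as the paper's proof: invoke the $W^{2,q}_{loc}$ regularity of Theorem~\ref{thm1.2} (hence $C^{1,\delta}_{loc}$), test the equation against a cutoff of $x\cdot\nabla u$ with the derivative replaced by difference quotients as in \cite[Theorem~2.5]{Anthal2025Pohozaev} (cf.\ \cite{ambrosio,jiaquanliu}) to handle the mixed operator, and treat the Choquard term via \cite[Proposition~3.1]{moroz2013groundstates}. The only cosmetic difference is that the paper writes the cutoff as $\psi_R(x)=\psi(Rx)$ and sends $R\to 0$, whereas you write $\eta_R(x)=\eta(x/R)$ and send $R\to\infty$; these are the same limit under the reparametrization $R\mapsto 1/R$.
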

\begin{proof}
	From \autoref{thm1.2}, we have that $u\in W^{2,q}_{loc}(\mathbb{R}^n)$ for all $q\geq 2$. Therefore by Sobolev embeddings (see for instance \cite[Theorem 2.5.4]{Kesavan2019Topics}), we get $u\in C^{1,\delta}_{loc}(\mathbb{R}^n)$, for all $0<\delta<1$. Following the proof of \cite[Theorem~2.5]{Anthal2025Pohozaev}, since $(I_{\alpha}*|u|^p)\in L^{\infty}(\mathbb{R}^n)$ we get
	\begin{eqnarray}\label{A1}
		\lim_{R\rightarrow 0} L & = & \lim_{R\rightarrow 0}\left(\int_{\mathbb{R}^n} (x\nabla u)(\nabla \psi_R \nabla u)-\frac{1}{2}\int_{\mathbb{R}^n}|\nabla u|^2(x\nabla \psi_R)+\left(\frac{2-n}{2}\right)\int_{\mathbb{R}^n}|\nabla u |^2\psi_R\right.\nonumber\\
		&& -\frac{\lambda C(n,s)}{2.2}\int_{\mathbb{R}^n}\int_{\mathbb{R}^n}\frac{|u(x)-u(y)|^2 div(\Psi_R(x))}{|x-y|^{n+2s}}\nonumber\\
		&&  +\left(\frac{n+2s}{2}\right)\frac{\lambda C(n,s)}{2}\int_{\mathbb{R}^n}\int_{\mathbb{R}^n}\frac{|u(x)-u(y)|^2(x-y)(\Psi_R(x)-\Psi_R(y))}{|x-y|^{n+2s+2}}\nonumber\\
		&&-\frac{n\lambda C(n,s)}{2.2}\int_{\mathbb{R}^n}\int_{\mathbb{R}^n}\frac{|u(x)-u(y)|^2\psi_R(x)}{|x-y|^{n+2s}}\nonumber\\
		&&\left. -\frac{\lambda C(n,s)}{2.2}\int_{\mathbb{R}^n}\int_{\mathbb{R}^n}\frac{|u(x)-u(y)|^2\nabla \psi_R(x)x}{|x-y|^{n+2s}}\right)\nonumber\\
		& = & \left(\frac{2-n}{2}\right)\left\| \nabla u \right\|_2^2+\left(\frac{2s-n}{2}\right)\lambda[u]^2,
	\end{eqnarray}
	where $\psi\in C_c^{\infty}(\mathbb{R}^n)$ is such that $0\leq \psi \leq 1$ with $\psi=1$ for $|x|\leq 1$ and $\psi=0$ for $|x|\geq 2$, for any $R>0$ we further define $\psi_R(x):=\psi(Rx)$, $\Psi_R(x):=\psi(Rx)x$, $\phi(x):= \displaystyle \psi_R\left(\sum_{i=1}^nx_iD_iu(x)\right)$ with $D_i$, the difference quotient operator defined as follows:
	$$D_iv(x)= \frac{v(x+he_i)-v(x)}{h}, \;\;e_i=(0,\cdots , 1, \cdots,0) \text{ being the unit vector along }  x_{i}\text { coordinate, } $$
	and
	$$L=\lim_{h\rightarrow 0 }\left(\int_{\mathbb{R}^n}(\mu(I_{\alpha}*|u|^p)|u|^{p-2}u-u)\phi\right)=\int_{\mathbb{R}^n}(\mu(I_{\alpha}*|u|^p)|u|^{p-2}u-u)\psi_R(x) (x.\nabla u) ,$$
	by \cite[eq~(3.21)]{Anthal2025Pohozaev}. One can notice that in \cite{Anthal2025Pohozaev}, it is assumed that the solution is in $C^{0,\ell}(\mathbb{R}^n)$ for some $\ell>s$. However, the presence of the function with compact support, $\psi$, permits us to generalize everything for $u\in C^{0,\ell}_{loc}(\mathbb{R}^n)$ (and then for $u\in C^{1,\delta}_{loc}(\mathbb{R}^n)$) as well (see also arguments in $B_R(0)\times B_R(0)$ with fixed suitable $R>0$ after (3.3) page 2010  in \cite{ambrosio}).
	Further, by arguing as in \cite[Proposition~3.1]{moroz2013groundstates}, 
    we obtain
	\begin{equation}\label{A2}
		\lim_{R\rightarrow 0} L  =   -\mu\left(\frac{n+\alpha}{2p}\right)\mathcal{A}_p(u)+\frac{n}{2}\left\| u \right\|_2^2.
	\end{equation} 
	Thus, we get \eqref{Pohozaev} by combining \eqref{A1} and \eqref{A2}.
\end{proof}
\noindent \textbf{Acknowledgment:}  We thank the anonymous reviewer who gave very useful suggestions and help us to improve substantially our manuscript. 
\subsection*{Declarations}
\textbf{Ethical Approval} not applicable.\\
\textbf{Funding} This work is supported by the Ministry of Education, Government of India under the Prime Minister's Research Fellows (PMRF) scheme.\\
\textbf{Availability of data and materials} Not applicable.

\end{document}